\def\serieslogo@{} \def\@setcopyright{} \makeatother
\renewcommand*\env@matrix[1][c]{\hskip -\arraycolsep
  \let\@ifnextchar\new@ifnextchar
  \array{*\c@MaxMatrixCols #1}}
\numberwithin{equation}{section}
\newtheorem{thm}{Theorem}[section]
\newtheorem*{main-thm}{Main Theorem}
\newtheorem*{Auslandertype}{An Auslander-type ``splitting-big-objects'' Theorem}
\newtheorem*{ARTtheorem}{Auslander-Ringel-Tachikawa Theorem}
\newtheorem*{thmmain}{Theorem}
\newtheorem{cor}[thm]{Corollary}
\newtheorem{lem}[thm]{Lemma}
\newtheorem{prop}[thm]{Proposition}
\theoremstyle{definition}
\newtheorem{defn}[thm]{Definition}
\newtheorem{rem}[thm]{Remark}
\newtheorem{exam}[thm]{Example}
\newtheorem{Problem}[thm]{Problem}
\newcommand{\lxr}{\longrightarrow}
\newcommand{\A}{\mathscr A}
\newcommand{\B}{\mathscr B}
\newcommand{\C}{\mathscr C}
\newcommand{\MM}{\mathscr{M}}
\newcommand{\II}{\mathscr I}
\newcommand{\PP}{\mathscr P}
\newcommand{\M}{\mathcal M}
\newcommand{\T}{\mathcal T}
\newcommand{\X}{\mathcal X}
\newcommand{\mt}{\mathsf{T}}
\newcommand{\mk}{\mathsf{K}}
\newcommand{\mD}{\mathsf{D}}
\DeclareMathOperator*{\Ker}{\mathsf{Ker}}
\DeclareMathOperator*{\Coker}{\mathsf{Coker}}
\DeclareMathOperator*{\Mod}{\mathsf{Mod}}
\DeclareMathOperator*{\End}{\mathsf{End}}
 \DeclareMathOperator*{\smod}{\mathsf{mod}}
 \DeclareMathOperator*{\com}{\mathsf{com}}
\DeclareMathOperator*{\Inj}{\mathsf{Inj}}
\DeclareMathOperator*{\Proj}{\mathsf{Proj}}
\newcommand{\GProj}{\operatorname{\mathsf{GProj}}\nolimits}
 \newcommand{\Gproj}{\operatorname{\mathsf{Gproj}}\nolimits}
\DeclareMathOperator*{\CM}{\mathsf{CM}}
\DeclareMathOperator*{\Add}{\mathsf{Add}}
\DeclareMathOperator*{\add}{\mathsf{add}}
\DeclareMathOperator{\Hom}{\mathsf{Hom}}
 \DeclareMathOperator{\Rad}{\mathsf{Rad}}
\DeclareMathOperator*{\Ext}{\mathsf{Ext}}
  \DeclareMathOperator*{\op}{\mathsf{op}}
   \DeclareMathOperator*{\Ab}{\A\!\textit{b}}
   \DeclareMathOperator*{\dime}{\mathsf{dim}}
\DeclareMathOperator*{\ke}{\mathsf{Ke}}
\DeclareMathOperator*{\ind}{\mathsf{ind}}
\DeclareMathOperator*{\Cok}{\mathsf{Cok}}
\DeclareMathOperator*{\Epi}{\mathsf{Epi}}
\DeclareMathOperator*{\Mor}{\mathsf{Mor}}
\newcommand{\iden}{\operatorname{Id}\nolimits}
\newsavebox{\proofbox}
\savebox{\proofbox}{\begin{picture}(7,7)%
  \put(0,0){\framebox(7,7){}}\end{picture}}
\newcommand{\lra}{\longrightarrow}
\newcommand{\ra}{\rightarrow}
\newcommand{\rat}{\rightarrowtail}
\newcommand{\hra}{\hookrightarrow}
\newcommand{\tra}{\twoheadrightarrow}
\newcommand{\subs}{\subset}
\begin{document}

\title[]{Exact categories, big Cohen-Macaulay modules and finite representation type}

\author[C. Psaroudakis]{Chrysostomos Psaroudakis and Wolfgang Rump}
{\address{Department of Mathematics, Aristotle University of Thessaloniki, Thessaloniki, 54124, Greece}
\email{chpsaroud@math.auth.gr}}

{\address{Institute of Algebra and Number Theory, University of Stuttgart, Pfaffenwaldring 57, D-70550 Stuttgart, Germany}
\email{rump@mathematik.uni-stuttgart.de}}

\keywords{}

\subjclass[2010]{16E65;16G;16G10;16G30;16G50;16G60;18E10}

\begin{abstract}
One of the first remarkable results in the representation theory of artin algebras, due to Auslander and Ringel-Tachikawa, is the characterization of when an artin algebra is representation-finite. In this paper, we investigate aspects of representation-finiteness in the general context of exact categories in the sense of Quillen. In this framework, we introduce ``big objects'' and prove an Auslander-type ``splitting-big-objects'' theorem. Our approach generalises and unifies the known results from the literature. As a further application of our methods, we extend the theorems of Auslander and Ringel-Tachikawa to arbitrary dimension, i.e.\ we characterise when a Cohen-Macaulay order over a complete regular local ring is of finite representation type.
\end{abstract}

\maketitle

\setcounter{tocdepth}{1} \tableofcontents

\section{Introduction}

In the representation theory of artin algebras, the main problems are concerned with the additive structure of the module category, for instance how modules decompose into a direct sum of indecomposable modules. Finite representation type was one of the first leading issues in this area. The question was when the 
category of finitely generated modules over an artin algebra is of {\em finite 
type}, that is, when the set of isomorphism classes of its indecomposable objects is finite, and in particular what this really means for the whole module category 
itself.

Toward this question, Auslander and Ringel-Tachikawa have proved the following important result. The implication (i)$\Longrightarrow$(ii) below is due to Ringel-Tachikawa \cite{RT}, while the converse (ii)$\Longrightarrow$(i) is {\em Auslander's ``splitting-big-module'' theorem} \cite{Au76}.  

\begin{ARTtheorem}
Let $\Lambda$ be an artin algebra. The following statements are equivalent$\colon$
\begin{enumerate}
\item $\Lambda$ is of finite representation type.

\item Every $\Lambda$-module is a direct sum of finitely generated modules.
\end{enumerate}
\end{ARTtheorem}

In particular, this means that the module category doesn't have large indecomposables, i.e. indecomposables which are not finitely generated.
 
A remarkable analogue of the Auslander-Ringel-Tachikawa theorem was first proved by Chen \cite{Chen}, and later generalised by Beligiannis \cite{Bel}, for the subcategory of Gorenstein-projective modules over an artin algebra $\Lambda$. Beligiannis proved that any Gorenstein-projective $\Lambda$-module is a direct sum of finitely generated ones if and only if the artin algebra $\Lambda$ is virtually Gorenstein of finite Cohen-Macaulay type \cite{Bel}. The latter result improved Chen's theorem \cite{Chen} for Gorenstein algebras. It should be noted that the key idea of Beligiannis' result is an Auslander-Ringel-Tachikawa theorem for a resolving subcategory of the category $\smod\Lambda$ of finitely generated $\Lambda$-modules.

In the commutative case, and for a commutative noetherian local algebra $R$ over a field, Hochster \cite{Hoc} proved the existence of big Cohen-Macaulay modules. Later, Griffith \cite{Gri0} refined Hochster's theorem by showing that over a complete regular local ring $R$, any module-finite domain $S$ with a big Cohen-Macaulay module admits a countably generated one. The major problem in \cite{Gri} was when a countably generated big Cohen-Macaulay module over a complete local Gorenstein ring $R$ splits into a direct sum of finitely generated ones. Griffith's showed that this is the case if  $R$ is representation-finite (\cite[Corollary 5.2]{Gri}). On the other hand, Beligiannis \cite[Theorem~4.20]{Bel} obtained a decomposition theorem for Gorenstein projectives over a complete noetherian commutative local ring $A$, provided that there exists a non-projective finitely generated Gorenstein projective $A$-module. Moreover, there is a remarkable connection between the finiteness of Cohen-Macaulay modules \cite{Bel} and the singularity theory of the ring. In particular, Auslander \cite{Aus:finitetype} proved that every complete Cohen-Macaulay local ring of finite Cohen-Macaulay type is an isolated singularity. In this spirit, Christensen, Piepmeyer, Striuli and Takahashi \cite{CPST} proved, for a commutative noetherian local ring, that  
if the set of indecomposable totally reflexive modules is finite, then either this set has exactly one element or the ring is Gorenstein and an isolated singularity (over a Gorenstein ring the totally reflexive modules are exactly the maximal Cohen-Macaulay modules).

Very recently, the second author established new criteria for 
Cohen-Macaulay finiteness \cite{RumpFCMhdim} which extend the previously known results to arbitrary dimension. The appearance of Cohen-Macaulay finiteness in several different branches of mathematics like commutative algebra, non-commutative singularity theory, Gorenstein homological algebra and related topics, has 
prompted the idea of a common framework for Cohen-Macaulay finiteness, including the non-commutative case. From the work of Beligiannis \cite{Bel} already, but 
manifestly from recent work of the second author \cite{RumpFCMhdim}, it became 
evident that exact categories in the sense of Quillen provide the adequate
setting for tackling that problem. We also refer to the recent work by Enomoto \cite{Enomoto}, where exact categories were used to obtain a complete classification of Cohen-Macaulay finite Gorenstein algebras. 

Motivated by these results, it is natural to explore representation-finiteness with regard to big modules in a general context including higher dimension as well. In this spirit, this paper should be regarded as a natural continuation 
of \cite{RumpFCMhdim}. A first step toward an Auslander-Ringel-Tachikawa Theorem for Krull dimension one, that is, for classical orders over a complete discrete valuation domain, was done successfully by the second author in \cite{Ind}.  
So let us pass to the case of Krull dimension $d\geq 2$. 

Let $R$ be a complete regular local ring of Krull dimension $d$. Recall that a 
{\em Cohen-Macaulay order} $\Lambda$ over $R$ is an $R$-algebra which is finitely generated and free over $R$. Then a $\Lambda$-module $X$ is said to be
{\em Cohen-Macaulay} if $X$ is finitely generated and free over $R$. Note that 
for $d=0$, the order $\Lambda$ is just an artin $R$-algebra, and the category 
$\CM(\Lambda)$ of Cohen-Macaulay modules coincides with the category $\smod\Lambda$ of finitely generated $\Lambda$-modules. The following result 
(Theorem~\ref{t4}), based on a suitable concept of (accessible) {\em big} Cohen-Macaulay module, extends the Auslander-Ringel-Tachikawa theorem to arbitrary 
finite dimension. 

\begin{thmmain}
Let $\Lambda$ be a Cohen-Macaulay order over a complete $d$-dimensional regular 
local ring $R$. The following are equivalent$\colon$
\begin{enumerate}
\item $\Lambda$ is of finite representation type.
\item Every accessible big Cohen-Macaulay $\Lambda$-module is a direct sum of finitely generated $\Lambda$-modules.
\end{enumerate}
\end{thmmain}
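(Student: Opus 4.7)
The plan is to deduce this statement as a specialization of the general Auslander-type ``splitting-big-objects'' theorem announced in the introduction, applied to the exact category $\CM(\Lambda)$ of Cohen-Macaulay $\Lambda$-modules. The first step is therefore to verify that $\CM(\Lambda)$, with the exact structure inherited from $\Mod\Lambda$ (conflations are short exact sequences remaining exact after restriction to $R$), satisfies the hypotheses of that general theorem: since $R$ is complete regular local, $\CM(\Lambda)$ is Krull--Schmidt, it admits almost split sequences, and the ``big objects'' of this exact category can be identified with the accessible big Cohen-Macaulay $\Lambda$-modules of the theorem. Once this identification is made, both implications are instances of the general Auslander-type statement.

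For the implication $(ii)\Longrightarrow (i)$, the strategy mirrors Auslander's original splitting-big-module argument adapted to the exact-categorical framework. Assuming $\Lambda$ has infinitely many indecomposable Cohen-Macaulay modules up to isomorphism, one constructs from an infinite family of pairwise non-isomorphic indecomposables a transfinite system of conflations in $\CM(\Lambda)$ whose colimit, by completeness of $R$ and the accessibility bookkeeping, is an accessible big Cohen-Macaulay module containing an indecomposable direct summand that is not finitely generated. This contradicts (ii). The key machinery is the existence of almost split sequences in $\CM(\Lambda)$ together with Harada--Sai-type bounds on composites of non-isomorphisms; these are already available for Cohen-Macaulay orders over a complete regular local ring.

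For the converse $(i)\Longrightarrow (ii)$, the Ringel--Tachikawa direction, I would argue via an additive generator. If $\CM(\Lambda) = \add(G)$ for some $G$, put $\Gamma := \End_\Lambda(G)^{\op}$; then $\Hom_\Lambda(G,-)$ is fully faithful on $\CM(\Lambda)$ with image a resolving subcategory of $\smod\Gamma$, and it extends to an embedding of accessible big Cohen-Macaulay $\Lambda$-modules into $\Mod\Gamma$ preserving accessibility. Because $\Gamma$ is an $R$-order of finite representation type with an additive generator, the classical/extended Auslander--Ringel--Tachikawa theorem applies to $\Gamma$, and one transports the decomposition back through $\Hom_\Lambda(G,-)$. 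The point is that accessible big CM $\Lambda$-modules correspond to objects in $\Mod\Gamma$ for which Auslander's splitting is known, and the correspondence sends finitely generated summands to finitely generated summands.

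The main obstacle, I expect, is precisely the calibration of the ``accessibility'' hypothesis with the functors $\Hom_\Lambda(G,-)$ and its left adjoint in higher dimension $d\geq 2$. In dimension $d=1$, treated in \cite{Ind}, purity and completion behave much as in the artinian case and the correspondence above is transparent; for $d\geq 2$, one must ensure both that $\Hom_\Lambda(G,-)$ sends accessible big CM $\Lambda$-modules to $\Gamma$-modules to which the exact-categorical splitting theorem applies, and that decomposing on the $\Gamma$-side produces summands whose preimages are finitely generated over $\Lambda$, rather than merely $R$-free of bounded rank at each step. Verifying this compatibility---which is exactly what the notion of \emph{accessible} big Cohen-Macaulay module should be engineered to guarantee---is where most of the work is expected to lie, and it is the step that activates the general Auslander-type theorem in the specific setting of Cohen-Macaulay orders.
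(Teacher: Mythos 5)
Your proposal diverges substantially from the paper in both directions, and both of your routes have gaps.

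For the implication (ii)$\Longrightarrow$(i), you propose a Harada--Sai/transfinite-colimit construction producing a non-decomposable big object from a rep-infinite assumption. The difficulty is precisely where you wave at ``accessibility bookkeeping'': the colimit of a transfinite chain of conflations in $\CM(\Lambda)$ lands a priori only in $\varinjlim\CM(\Lambda)$, whereas the hypothesis (ii) concerns only $\ke(\CM(\Lambda))$, the smallest resolving subcategory of $\Mod(\add\Lambda)$ containing $\Add\CM(\Lambda)$. By Proposition~\ref{proplimresolving} one has $\ke(\CM(\Lambda))\subset\varinjlim\CM(\Lambda)$, and the inclusion is generally strict; the whole point of the paper's notion of accessibility is that a generic filtered colimit need not be reachable in finitely many resolving steps from $\Add\CM(\Lambda)$. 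So the object you construct is not obviously one the hypothesis speaks about. The paper avoids this entirely: (ii)$\Longrightarrow$(i) is a bare application of the general Auslander-type Theorem~\ref{t3} (which shows by a factorization argument, not Harada--Sai, that decomposability of $\ke(\A)$-objects forces $\A/[\PP]$ strongly left noetherian), followed by Corollary~\ref{CorRumpCor1} and Theorem~\ref{thmRumpThm1}(v) to pass from strong noetherianity to finite representation type. Nothing needs to be built by hand.

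For (i)$\Longrightarrow$(ii) your projectivization through $\Gamma=\End_\Lambda(G)^{\op}$ is circular: you invoke ``the classical/extended Auslander--Ringel--Tachikawa theorem'' for the $R$-order $\Gamma$, but for $d\geq 1$ this extended theorem is exactly what is being proved, and the genuinely classical (artinian) version does not apply since $\Gamma$ is not artinian and, in particular, not semiperfect/perfect, so flat $\Gamma$-modules need not be projective and the transport of decompositions along $\Hom_\Lambda(G,-)$ breaks down. The paper instead argues by induction on $\dime R$. The base case $\dime R=0$ is the honest classical Ringel--Tachikawa theorem. For the inductive step, one picks a parameter $\pi\in\Rad R$ and, using that $\Ext(\CM(\Lambda))$ is a length category (Theorem~\ref{thmRumpThm1}), finds $n$ with $\pi^n\Ext(\CM(\Lambda))=0$; for $L\in\ke_1(\CM(\Lambda))$ one then shows $\pi^{2n}\colon L\to L$ factors through a projective, realizes $L'\oplus L$ as a syzygy of $L/\pi^{2n}L$, and applies the inductive hypothesis over the $(d-1)$-dimensional ring $R/\pi^{2n}R$. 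That reduction in dimension via a parameter is the mechanism that replaces projectivization here, and it is what makes the higher-dimensional statement go through without circularity.
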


The theorem will be obtained as a consequence of an Auslander-type ``splitting-big-objects'' theorem in the context of exact categories. This result is built 
from recent work of the second author \cite{RumpFCMhdim}, where new representation-theoretic techniques are applied to exact categories ($L$-functors, Auslander-Reiten exact categories and other, see subsection~\ref{subsectionacyclic}). To state our general Auslander-type result, we have to introduce some notation first.

We work over {\em Ext-categories} \cite{T}, that is, exact categories with 
enough projectives and injectives, see subsection~\ref{subExtcat}. The first problem in this setup is to define ``big objects'' in a meaningful way. Let $\A$ be an Ext-category. For a full subcategory $\C$ of $\Mod(\PP)$, where $\PP$ denotes the full subcategory of projectives of $\A$, we define $\Add \C$ to be the full subcategory of direct summands of coproducts $\coprod_{\gamma\in\Gamma} C_\gamma$ with $C_\gamma\in\C$. It turns out that $\Add\A$ is
equivalent to ${\bf Add}\A:=\Proj(\Mod(\A))$ (see \cite{Lat}). We construct 
an increasing sequence $\ke_0(\A)\subs\ke_1(\A)\subs\ke_2(\A)\subs\cdots$ of full subcategories of $\Mod\PP$, starting with the category 
$\ke_0(\A)=\Add\A$ of projectives in $\Mod(\A)$. The next step is to consider the 
subcategory $\ke_1(\A)$ of objects $L$ in $\Mod(\PP)$ arising from a short exact 
sequence $0\to L\to M\to A\to 0$ with $M$ and $A$ in $\ke_0(\A)$. 
Iterating this process and taking the union of all $\ke_n(\A)$, we obtain a category $\ke(\A)$ which turns out to be the smallest resolving subcategory of $\Mod(\PP)$ which 
contains $\Add\A$, see Proposition~\ref{p20}. We say that $\ke(\A)$ consists of the ``accessible big'' $\A$-objects, see Definition~\ref{d8} and the remarks after for further explanation.
 
Next, for an additive category 
$\A$ we consider the category $\smod(\A)$ of coherent functors $\A^{\op}\lxr \Ab$ (we view this category as a certain  quotient of the morphism category of $\A$, see subsection~\ref{subhomcattwoterm}). An additive category $\A$ is called {\em strongly left noetherian} \cite{Lat} if $\smod(\A)$ is abelian and noetherian.
 
Our second main result is the following, see Theorem~\ref{t3}.

\begin{Auslandertype}
Let $\A$ be a left Ext-category. Assume that every object of $\ke(\A)$ is a direct sum of objects in $\A$. Then the quotient $\A/[\mathscr{P}]$ is strongly left noetherian.
\end{Auslandertype}

The relevance of ``strongly left noetherian'' is its connection to representation finiteness. Indeed, the latter property together with certain conditions implies that the set of isomorphism classes of the indecomposable $\A$-objects is finite (Theorem~\ref{thmRumpThm1}). This representation-theoretic interpretation is due to the second author and is discussed in subsection~\ref{subsectionacyclic}.

Apart from proving the above Auslander-Ringel-Tachikawa criterion for 
represent\-ation-finite orders in higher dimension, we will use this general {\em ``splitting-big-objects''} theorem to give direct proofs of Beligiannis' results \cite{Bel} (Corollaries~\ref{corBel1},~\ref{corBel2},~\ref{corBel3} and~\ref{corBel4}), and to extend Chen's theorem \cite{Chen} from Gorenstein artin algebras to a wide class of Gorenstein rings (see Corollary~\ref{corChensresult}).

The article is structured as follows. In Section~\ref{sectionprelim} we recall notions and results on exact categories that are used throughout the paper. In Section~\ref{sectionthreederextcat} we discuss the derived category of an Ext-category. In particular, we introduce the notion of an adjoint pair of subcategories (see Definition~\ref{d4}) and show in Proposition~\ref{p15} that the pair $(\smod(\PP), \com(\II))$ is an adjoint pair in the derived category $\mD(\A)$. Here $\smod(\PP)$ and $\com(\II)$ are certain quotients of the category of morphisms over $\A$, see subsection~\ref{subhomcattwoterm}.
At the end of this section, after recalling a notion of a dimension for exact categories introduced by the second author in \cite{RumpFCMhdim}, we characterise in Proposition~\ref{p16} when an Ext-category has finite dimension in terms of the $\A$-resolution dimension of $\smod(\PP)$ in the sense of Auslander-Buchweitz. In Section~\ref{section:totallyacyclic}, working again with an Ext-category $\A$ such that the subcategory of projectives $\PP$ is left coherent, we define the acyclic closure $\mt(\A)$ of $\A$. We first show in Proposition~\ref{p17} that the acyclic closure $\mt(\A)$ is again an Ext-category. Moreover, we show in Proposition~\ref{p18} that $\mt(\A)$ consists of the Gorenstein-projectives in $\smod(\PP)$. As a consequence, we derive in Corollary~\ref{corexactacyclic} that a complex over $\A$ is exact if and only if it is acyclic over $\mt(\A)$.  This clarifies the relationship between acyclicity with exactness. Based on this we call $\A$ totally acyclic (Definition~\ref{d7}) if $\mt(\A)=\A$. The final Section~\ref{sectionAuslandertyperesult} is devoted to show the main results of the paper as presented in the first part of the introduction.

We remark that our approach to finite representation type is again, in a sense, ``functorial'' but with many homological influences now in the context of exact categories based on the work \cite{RumpFCMhdim}. For an overview of Auslander's functorial approach on finite representation type, we refer the reader to the book of Krause \cite{Krause:book}, see also \cite{KrauseVossieck}. We also refer to the book of Leuschke-Wiegand \cite{CMrepresentations} for an overview on Cohen-{M}acaulay representations.

\subsection*{Conventions and Notation}
For a ring $R$ we work usually with left $R$-modules and the
corresponding category is denoted by $\Mod(R)$.  The full subcategory
of finitely presented $R$-modules is denoted by $\smod(R)$. Our
additive categories are assumed to have finite direct sums and our
subcategories are assumed to be closed under isomorphisms and direct
summands. If $\X$ is a full subcategory of an abelian category $\A$, we denote by 
 $\Add{\X}$ (respectively, $\add{X}$) the full subcategory of $\A$ consisting of all objects which are summands of a direct sum (respectively, finite direct sum) of objects of $\X$. The Jacobson radical of a ring $R$ is denoted by $\Rad R$.
By a module over an artin algebra $\Lambda$, we mean a finitely
presented (generated) left $\Lambda$-module. We also write
$\mathscr{P}:=\Proj(\A)$ and $\mathscr{I}:=\Inj(\A)$ for the projective, respectively injective, objects of the category $\A$.


\section{Preliminaries}
\label{sectionprelim}

\subsection{$\Ext$-Categories}
\label{subExtcat}
Let $\A$ be an additive category. A pair of morphisms in $\A\colon$
\[
\begin{tikzcd}
A \arrow[r, "a"] & B \arrow[r, "b"] & C  
\end{tikzcd}
\]
is said to be a {\em short exact sequence} if $a=\Ker{b}$ and $b=\Coker{a}$. An additive category $\A$ is called {\em exact} if there is a non-empty class $\mathsf{Con}(\A)$ of short exact sequences satisfying certain axioms. We assume that $\mathsf{Con}(\A)$ is closed under isomorphisms. Following Keller \cite{Ke}, the short exact sequences in $\mathsf{Con}(\A)$ are called conflations, the morphism $a$ (respectively, $b$) in a conflation as above is called inflation (respectively, deflation). Then the defining axioms of an exact category $\A$ are the following$\colon$
\begin{enumerate}
\item The composition of inflations (respectively, deflations) is an inflation (respectively, deflation).

\item The pullback (respectively, pushout) of a deflation (respectively, an inflation) along an arbitrary morphism exists and is a deflation (respectively, an inflation).
\end{enumerate}
Exact categories were introduced by Quillen \cite{Qu}. We refer to \cite{Buhler} for an overview of the basic homological theory in exact categories in the sense of Quillen.

We now recall the notion of an Ext-category introduced by the second author in \cite{T}. Let $\C$ be a full subcategory of an additive category $\A$. Denote by $\Epi(\C)$ the class of $\C$-epimorphisms, that is, morphisms $e\colon A\lxr A'$ such that any morphism $C'\lxr A'$ with $C'$ in $\C$ factors through $e$. Recall from \cite{AuSm} that $\C$ is contravariantly finite in $\A$ if every object $A$ in $\A$ admits a $\C$-epimorphism $C\lxr A$ with $C$ in $\C$. On the other hand, given any class of morphisms $\Sigma\subset \A$ we can form the full subcategory of $\Sigma$-projectives in $\A$ denoted by $\mathsf{Pr}\Sigma$. This means precisely that $\mathsf{Pr}\Sigma$ is the largest full subcategory $\C$ of $\A$ such that $\Sigma\subset \Epi(\C)$. Motivated by the work of Maranda \cite{M}, a pair $(\C,\Sigma)$ is called a {\em projective structure} in $\A$ if $\C=\mathsf{Pr}\Sigma$, $\Sigma=\Epi(\C)$ and for every $A$ in $\A$ there exists a morphism $C\lxr A$ with $C$ in $\C$.

We refer to the reader to \cite{TrE} for characterizing when a full subcategory of $\A$ gives rise to a projective structure as well as when a morphism class in $\A$ defines a projective structure. We also leave to the reader to formulate the duals of the above concepts ($\C$-monomorphism, $\Sigma$-injective and injective structure).

Suppose now that $\A$ is an exact category. Note first that by the definition it follows that the split short exact sequences belong to $\mathsf{Con}(\A)$. Denote by $\mathsf{Def}(\A)$, resp. $\mathsf{Inf}(\A)$, the class of deflations, resp. inflations, in $\A$. We follow the standard notation for an inflation and a deflation, i.e.\  $\rat$ and $\tra$ respectively. The objects of $\Proj(\A):=\mathsf{Pr}(\mathsf{Def}(\A))$ are called {\em projective} and the objects of $\Inj(\A):=\mathsf{In}(\mathsf{Inf}(\A))$ are called {\em injective}. If every object $A$ in $\A$ admits a deflation $P\lxr A$ with $P$ in $\Proj\A$, then we say that $\A$ has enough projectives. Dually, we say that $\A$ has enough injectives if $\A^{\op}$ has enough projectives. 

Recall from \cite{RumpLfunctors, TrE} that an exact category $\A$ is called {\em divisive} if every split epimorphism has a kernel. We recall the following characterization for an exact category to be divisive. For the proof see \cite[Proposition~1]{RumpLfunctors}.

\begin{lem}
\label{lemdivisive}
Let $\A$ be an exact category. The following are equivalent$\colon$
\begin{enumerate}
\item $\A$ is divisive.

\item A morphism $b\colon B\lxr C$ is a deflation in $\A$ whenever there is a morphism $a\colon A\lxr B$ such that $ba$ is a deflation.
\end{enumerate}
\end{lem}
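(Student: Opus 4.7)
The plan is to prove the two implications separately, with the easier direction being (ii)$\Rightarrow$(i) and the more delicate one being (i)$\Rightarrow$(ii).

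For (ii)$\Rightarrow$(i), the observation is that the identity $\id_X$ is always a deflation (it fits in the split conflation $0\rightarrowtail X\twoheadrightarrow X$). Given a split epimorphism $p\colon X\to Y$ with section $s\colon Y\to X$, one has $p\circ s=\id_Y$, which is a deflation. Invoking (ii) with $a:=s$ and $b:=p$, one concludes that $p$ itself is a deflation. Since every deflation is part of a conflation, $p$ admits a kernel, so $\A$ is divisive.

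For (i)$\Rightarrow$(ii), suppose $ba\colon A\to C$ is a deflation and form the pullback
\[
\begin{tikzcd}
P \arrow[r,"\pi_A"] \arrow[d,"\pi_B"'] & A \arrow[d,"ba"] \\
B \arrow[r,"b"'] & C
\end{tikzcd}
\]
which exists because $ba$ is a deflation, and whose induced map $\pi_B\colon P\twoheadrightarrow B$ is again a deflation (axiom (2) for exact categories). The universal property, applied to the compatible pair $(a,\id_A)$, produces a morphism $s\colon A\to P$ satisfying $\pi_A s=\id_A$ and $\pi_B s=a$; in particular $\pi_A$ is a split epimorphism. Here the assumption of divisiveness enters: by (i), $\pi_A$ admits a kernel, and together with $s$ this kernel yields a split conflation exhibiting $\pi_A$ as a deflation.

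At this point, $b\pi_B=ba\pi_A$ is a composition of two deflations, hence itself a deflation. The final step is to invoke the standard fact in exact categories that if $f$ is a deflation and $gf$ is a deflation then $g$ is a deflation; applying this with $f=\pi_B$ and $g=b$, one obtains that $b$ is a deflation, as desired. The main obstacle in the forward direction is thus essentially conceptual: one must cleverly build, out of the arbitrary morphism $a$, a split epimorphism whose existence (guaranteed by divisiveness) forces $b$ to sit in a factorization of deflations from which one can extract its deflation status via this standard cancellation property.
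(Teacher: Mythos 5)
Your proof of the implication (ii)$\Rightarrow$(i) is correct, and the reduction in (i)$\Rightarrow$(ii) — forming the pullback of $ba$ along $b$, producing the section $s$ of $\pi_A$, and using divisiveness to make $\pi_A$ a deflation so that $b\pi_B=ba\pi_A$ is a deflation — is the right idea. The gap is in the very last step. The ``standard fact'' you invoke, namely that $g$ is a deflation whenever $f$ and $gf$ are both deflations, is \emph{not} true in a general exact category. A counterexample: let $R$ be a ring with a stably free non-free projective $P$ satisfying $P\oplus R\cong R^3$ (e.g.\ the coordinate ring of the $2$-sphere), and take $\A$ to be the category of finitely generated free $R$-modules with the split exact structure. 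With $g\colon R^3=P\oplus R\to R$ the projection and $f\colon R^4=R\oplus R^3\to R^3$ the projection, $f$ is a deflation, $gf$ has kernel $R\oplus P\cong R^3$ (free) so $gf$ is a deflation, yet $g$ is not a deflation since its kernel $P$ lies outside $\A$. What \emph{is} true in an arbitrary exact category is the ``obscure axiom'' (B\"uhler, Prop.~2.16): if $g$ \emph{admits a kernel} and $gf$ is a deflation for some $f$, then $g$ is a deflation. The kernel-free version you cite holds only under weak idempotent completeness (B\"uhler, Prop.~7.6), which is exactly divisiveness — so that citation is essentially the statement being proved and does not constitute a proof.

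The gap is easily closed from the data you already have. By divisiveness $\pi_A$ has a kernel $j\colon K\to P$. Then $\pi_B j\colon K\to B$ is a kernel of $b$: one has $b\pi_B j=ba\pi_A j=0$, and for any $h\colon X\to B$ with $bh=0$ the compatible pair $(0\colon X\to A,\ h\colon X\to B)$ lifts uniquely to $\tilde h\colon X\to P$ with $\pi_A\tilde h=0$ and $\pi_B\tilde h=h$; since $\pi_A\tilde h=0$, the morphism $\tilde h$ factors uniquely through $j$, yielding the required universal factorization (uniqueness follows because $j$ is monic and the pullback lift is unique). Now $b$ admits a kernel and $b\pi_B$ is a deflation, so the genuine obscure axiom gives that $b$ is a deflation. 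Note that the paper itself does not reprove this lemma but cites Rump's earlier work; the intended argument is exactly of this form.
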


For an exact category $\A$, the pair $(\Proj(\A), \mathsf{Def}(\A))$ is a projective structure if and only if the exact category $\A$ is divisive and has enough projective objects \cite{RumpFCMhdim}. An exact category with the latter property is called a {\bf left Ext-category}. Dually we have the notion of a right Ext-category. A divisive exact category with enough projectives and enough injectives is called an {\bf Ext-category}.

\subsection{The Homotopy Category of Two-Term Complexes}
\label{subhomcattwoterm}
Let $\A$ be an additive category. We denote by $\Mor(\A)$ the category of morphisms over $\A$. The objects of $\Mor(\A)$ are two-termed complexes $0\lxr A_1\lxr A_0\lxr 0$ and given another object $0\lxr B_1\lxr B_0\lxr 0$ in $\Mor(\A)$, a morphism between these complexes is given by a pair of maps $(f,g)$ such that the following square commutes$\colon$
\[
\begin{tikzcd}
A_1 \arrow[d, "f"] \arrow[r, "a"] & A_0 \arrow[d, "g"]   \\
B_1 \arrow[r, "b"] & B_0  
\end{tikzcd}
\]
Moreover, there is a natural fully faithful functor $\A\lxr \Mor(\A)$ given by the assignment  $A\mapsto \iden_A\colon A\lxr A$. Denote by $[\A]$ the ideal of $\Mor(\A)$ generated by the identity morphisms $\iden_{A}$ in $\Mor(\A)$. It is easy to check that the ideal $[\A]$ of $\Mor(\A)$ consists of homotopic to zero morphisms. We denote by $\mathsf{M}(\A)$ the homotopy category of $\Mor(\A)$ which is equivalent to the quotient category $\Mor(\A)/[\A]$. There are two natural full embeddings$\colon$
\[
\begin{tikzcd}
\A^{+} \arrow[r, hook] & \mathsf{M}(\A) & \arrow[l, hook] \A^{-}
\end{tikzcd}
\]
where $\A^+$, resp. $\A^{-}$, consists of the morphisms $A^+\colon 0\lxr A$, resp. $A^{-}\colon A\lxr 0$, with $A$ in $\A$. Then the factor category $\smod(\A):=\mathsf{M}(\A)/[\A^{-}]$ is equivalent to the category of coherent functors $\A^{\op}\lxr \Ab$ (see \cite[Chapter III.1]{Au71}). For the notion of coherent functor we refer to \cite{Au71}. Moreover, the quotient category $\mathsf{M}(\A)/[\A^{+}]$ is equivalent to $\com(\A):=(\smod(\A^{\op}))^{\op}$.

Let $\A$ be an exact category. We also need to consider the full subcategory $\Ext(\A)$ of the homotopy category $\mathsf{K}(\A)$ of complexes over $\A$ consisting of three-termed complexes $0\lxr A_0\lxr A_1\lxr A_2\lxr 0$ which are conflations in $\A$. In fact, we have the following full embeddings$\colon$
\[
\begin{tikzcd}
\com(\A) & \Ext(\A) \arrow[l, hook] \arrow[r, hook] & \smod(\A) 
\end{tikzcd}
\]
\[
 \ \mathsf{inflation} \ \longmapsfrom \ \mathsf{conflation} \ \longmapsto 
\  \mathsf{deflation}
\]

We need in the sequel the following interesting result on the category $\Ext(\A)$, for the proof see \cite[Proposition~2]{RumpFCMhdim}. 

\begin{prop}
\label{RumpProp2Extab}
Let $\A$ be a divisive exact category. Then $\Ext(\A)$ is abelian. Moreover, if $\A$ is a left $Ext$-category, then $\Ext(\A)\simeq \smod(\underline{\A})$ with $\underline{\A}:=\A/[\Proj\A]$.
\end{prop}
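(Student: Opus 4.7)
My plan mirrors the two parts of the statement. For the first claim, that $\Ext(\A)$ is abelian whenever $\A$ is divisive, I would verify existence of kernels and cokernels together with the image-coimage identity directly inside $\Ext(\A)$. A morphism in $\Ext(\A)$ between conflations $X = (A_0 \rat A_1 \tra A_2)$ and $Y = (B_0 \rat B_1 \tra B_2)$ is a homotopy class of commuting triples $(f_0, f_1, f_2)$. To build a candidate for the kernel, I would form the pullback $P$ of $A_2 \xr{f_2} B_2 \xl{} B_1$ in $\A$; the axiom that the pullback of a deflation along any morphism is again a deflation produces a conflation $B_0 \rat P \tra A_2$, together with a canonical morphism $A_1 \to P$ furnished by the universal property of pullback. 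A three-by-three diagram manipulation in $\A$ then yields the kernel-conflation in $\Ext(\A)$; cokernels are constructed dually via pushouts. The image-coimage identity, which is what promotes a pre-abelian structure to an abelian one, would follow from Lemma~\ref{lemdivisive}(ii): divisiveness says exactly that deflations can be detected by composition, and this is the input needed to show that every morphism in $\Ext(\A)$ admits a canonical mono-epi factorization.

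For the second claim, the equivalence $\Ext(\A) \simeq \smod(\underline{\A})$ assuming that $\A$ is a left Ext-category, I would construct an explicit functor $F : \Ext(\A) \lxr \smod(\underline{\A})$ by sending a conflation $A_0 \rat A_1 \tra A_2$ to the coherent functor $\coker\bigl(\underline{\A}(-, A_1) \lxr \underline{\A}(-, A_2)\bigr)$ on $\underline{\A}^{\op}$. Equivalently, $F$ is the composition of the embedding $\Ext(\A) \hookrightarrow \smod(\A)$ on the deflation side recalled in the excerpt with the canonical quotient $\smod(\A) \lxr \smod(\underline{\A})$ induced by the projection $\A \lxr \underline{\A}$. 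Essential surjectivity uses the enough-projectives hypothesis decisively: given a coherent functor presented by a morphism $\bar{g} : \bar{Y}_1 \lxr \bar{Y}_0$ in $\underline{\A}$, I would lift $\bar{g}$ to a morphism $g : Y_1 \to Y_0$ in $\A$, choose a deflation $p : P \tra Y_0$ with $P$ projective, and form the deflation $(g, p) : Y_1 \oplus P \tra Y_0$. Its kernel completes it to a conflation, and passage back to $\underline{\A}$ kills the projective summand and recovers the given presentation.

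The step I expect to be the main obstacle is the full faithfulness of $F$ in the second part. The delicate point is reconciling two separate layers of identification: on the $\Ext(\A)$ side the homotopy of three-term complexes in $\mathsf{K}(\A)$, and on the $\smod(\underline{\A})$ side the quotienting by morphisms factoring through $\Proj(\A)$. The crucial technical move is to lift every null-homotopy of $F(f_0, f_1, f_2)$ in $\smod(\underline{\A})$---which a priori only asserts that certain composites factor through projectives---back to a genuine null-homotopy of the triple $(f_0, f_1, f_2)$ in $\mathsf{K}(\A)$. The defining lifting property of projectives against deflations, combined with Lemma~\ref{lemdivisive}, is what allows one to reconstruct the requisite comparison morphisms in $\A$ and thereby close the argument.
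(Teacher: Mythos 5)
The paper does not contain a proof of Proposition~\ref{RumpProp2Extab}: the sentence immediately preceding it defers to \cite[Proposition~2]{RumpFCMhdim}, so there is no in-paper argument to compare against. Assessed on its own terms, your sketch of the first claim has a concrete gap. The pullback $P$ of $B_1\tra B_2\xleftarrow{f_2}A_2$ does \emph{not} produce the kernel of $(f_0,f_1,f_2)$ in $\Ext(\A)$; it produces the \emph{image}. Setting $Y':=(B_0\rat P\tra A_2)$, the morphism factors as $X\to Y'\to Y$, with the first arrow having third component $\iden_{A_2}$ and the second arrow having first component $\iden_{B_0}$; under the coherent-functor identification you yourself invoke for the second part, $F_X\to F_{Y'}$ is an epimorphism and $F_{Y'}\to F_Y$ is a monomorphism, so this is exactly the coimage--image factorization. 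To reach the kernel you need a \emph{second} pullback: let $Q$ be the pullback of $P\tra A_2\xleftarrow{a}A_1$. Then $Q\to P$ is again a deflation, its kernel is $A_0$, and the kernel of $(f_0,f_1,f_2)$ is the conflation $A_0\rat Q\tra P$ (one checks $\coker(h_Q\to h_P)\cong\ker(F_X\to F_Y)$). Your phrase ``a three-by-three diagram manipulation then yields the kernel-conflation'' hides precisely this step, and the claim that divisiveness delivers the image-coimage identity is an assertion, not an argument --- divisiveness is the hypothesis that makes the factorization above an epi-mono factorization coinciding with $\Coker\Ker$ and $\Ker\Coker$, but that verification is the actual content and must be written out.

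The second part of your proposal is in better shape. The functor is the right one, and the essential-surjectivity argument (lift $\bar g$, stabilize by a projective deflation $p\colon P\tra Y_0$, observe that $(g\ \, p)$ is a deflation by Lemma~\ref{lemdivisive}, and complete it to a conflation whose image under the functor kills the projective summand) is correct. But full faithfulness remains open after your sketch. If the image of $(f_0,f_1,f_2)$ vanishes in $\smod(\underline{\A})$, you only know that certain composites vanish modulo $[\PP]$; you must still produce honest morphisms $s_1\colon A_1\to B_0$ and $s_2\colon A_2\to B_1$ in $\A$ satisfying the chain-homotopy identities against the conflation $Y$, lifting through the deflation $B_1\tra B_2$ and back along the inflation $B_0\rat B_1$ with the help of projective lifting and divisiveness. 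Naming these ingredients is not the construction, and since this is the genuinely delicate step of the equivalence, it needs to be carried out in full.
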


\subsection{Acyclic Complexes}
\label{subsectionacyclic}

Let $\A$ be an exact category and consider a complex
\begin{equation}
\label{2}
\begin{tikzcd}
A\colon \ \cdots \arrow{r}  & A_{-1} \arrow[r, "a_{-1}"] & A_0 \arrow[r, "a_0"] & A_1 \arrow[r, "a_1"] & A_2 \arrow{r}{} & \cdots
\end{tikzcd}
\end{equation}
The complex $A$ is called {\em acyclic} if there exist conflations in $\A\colon$
\[
\begin{tikzcd}
Z_{n-1} \arrow[r, rightarrowtail, "i_{n-1}"] & A_n \arrow[r, twoheadrightarrow, "p_n"] & Z_n 
\end{tikzcd}
\]
with $a_n=i_np_n$ for all $n\in\mathbb{Z}$. 

We will need throughout the paper the following useful result due to Keller. Note that the first statement is the dual of (\cite[Lemma~4.1]{Ke}).

\begin{lem}
\label{Keller'slemma}
Let $\A$ be an exact category.
\begin{enumerate}
\item Assume that $\A$ has enough projectives. Then for each bounded above complex $A$, there is a triangle in the homotopy category $\mathsf{K}^{-}(\A)$ of bounded above complexes$\colon$
\[
\begin{tikzcd}
Z[-1] \arrow{r} & P \arrow{r} & A \arrow{r} & Z 
\end{tikzcd}
\]
such that $Z$ is acyclic and each component in $P$ is projective.

\item Assume that $\A$ has enough injectives. Then for each bounded below complex $B$, there is a triangle in the homotopy category $\mathsf{K}^{+}(\A)$ of bounded below complexes$\colon$
\[
\begin{tikzcd}
 Z \arrow[r]  & B \arrow[r] & I \arrow[r] & Z[1]
\end{tikzcd}
\]
such that $Z$ is acyclic and each component in $I$ is injective.
\end{enumerate}
\end{lem}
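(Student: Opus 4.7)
The plan is to prove part (1); part (2) then follows by applying (1) to the opposite exact category $\A^{\op}$, in which deflations, projectives, and bounded-above complexes correspond to inflations, injectives, and bounded-below complexes of $\A$.

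For (1), given a bounded-above complex $A$ with $A_n = 0$ for $n > N$, I will construct a bounded-above complex $P$ of projectives (with $P_n = 0$ for $n > N$) together with a chain map $f\colon P \to A$ whose mapping cone is acyclic. Setting $Z := \mathsf{Cone}(f)$, the standard distinguished triangle $P \to A \to \mathsf{Cone}(f) \to P[1]$ in $\mathsf{K}^{-}(\A)$ is then the triangle demanded.

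The construction proceeds by descending induction. For $n > N$ set $P_n = 0$, and pick a deflation $f_N\colon P_N \twoheadrightarrow A_N$ with $P_N$ projective. Inductively, suppose that $P_n, f_n, d^P_n$ have been defined for all $n \geq m+1$, so that the mapping cone formed so far comes equipped with conflations $Z_n \rightarrowtail P_{n+1} \oplus A_n \twoheadrightarrow Z_{n+1}$ realizing its acyclicity in every degree $\geq m+1$; in particular the cycle object $Z_m$ is the kernel of
\[
\begin{pmatrix}-d^P_{m+1} & 0 \\ f_{m+1} & d^A_m\end{pmatrix}\colon P_{m+1}\oplus A_m\lra P_{m+2}\oplus A_{m+1}.
\]
Using $d^A_m \circ d^A_{m-1} = 0$, the morphism $(0, d^A_{m-1})\colon A_{m-1}\to P_{m+1}\oplus A_m$ factors through $Z_m$, yielding $\tilde d\colon A_{m-1}\to Z_m$. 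Pick a deflation $h'_m\colon P_m\twoheadrightarrow Z_m$ with $P_m$ projective, and form $h_m := (h'_m, \tilde d)\colon P_m\oplus A_{m-1}\to Z_m$. This $h_m$ is itself a deflation, since it factors as
\[
P_m\oplus A_{m-1}\xrightarrow{h'_m\oplus \iden} Z_m\oplus A_{m-1}\xrightarrow{(\iden,\tilde d)} Z_m,
\]
a composite of a direct-sum deflation and a split deflation. Reading off the components of the composition $P_m\oplus A_{m-1} \twoheadrightarrow Z_m\rightarrowtail P_{m+1}\oplus A_m$ defines $d^P_m$ and $f_m$ with the prescribed matrix shape, and produces the new conflation $Z_{m-1}\rightarrowtail P_m\oplus A_{m-1}\twoheadrightarrow Z_m$ extending the acyclicity one degree further.

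By construction each differential of $\mathsf{Cone}(f)$ factors through the kernel of the next, so $d^{\mathsf{Cone}}\circ d^{\mathsf{Cone}} = 0$; its $(1,1)$ and $(2,1)$ entries yield simultaneously that $P$ is a complex of projectives and that $f$ is a chain map, while the conflations produced inductively witness that $\mathsf{Cone}(f)$ is acyclic in the sense of Subsection~\ref{subsectionacyclic}. The only delicate point is purely exact-categorical: one must verify that $Z_m$ is genuinely the kernel of a deflation in the exact structure (so the inductive hypothesis persists) and that the combined morphism $h_m$ is a deflation. Both statements rest on the standard closure of deflations under composition and direct sums, together with the fact that any split epimorphism is a deflation; once these are recorded, the induction runs without further difficulty.
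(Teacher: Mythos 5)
Your overall strategy is exactly the standard one (the paper does not reprove this; it cites the dual of Keller's Lemma~4.1, and your construction is the dualized version of Keller's argument): build $P$ term by term so that the cone is acyclic, and get (2) from (1) applied to $\A^{\op}$. The base case, the factorization of $(0,d^A_{m-1})$ through the cycle object, the use of a deflation $P_m\twoheadrightarrow Z_m$ from enough projectives, the verification that $h_m$ is a deflation (direct sums of deflations, split epis are deflations, closure under composition -- all available because split short exact sequences are conflations in the sense used here), and the extraction of $d^P$ and $f$ from the matrix of the cone differential are all correct.

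There is, however, a genuine off-by-one in your induction hypothesis that hides the one nontrivial verification. You assume conflations $Z_n\rightarrowtail \mathsf{Cone}(f)_n\twoheadrightarrow Z_{n+1}$ only for degrees $n\ge m+1$, and then assert ``in particular $Z_m$ is the kernel of $d^{\mathsf{Cone}}_m$''. That does not follow: in a general exact category the morphism $d^{\mathsf{Cone}}_m$ need not have a kernel at all, and the existence of $Z_m$ as a kernel is equivalent to the induced map $p_m\colon\mathsf{Cone}(f)_m\to Z_{m+1}$ being a deflation, i.e.\ to the conflation at degree $m$ -- which is exactly what your hypothesis omits. What saves you is that $\mathsf{Cone}(f)_m=P_{m+1}\oplus A_m$ depends only on $P_{m+1}$, and your step producing $P_{m+1}$ does produce the conflation at degree $m$ (your ``extending the acyclicity one degree further'' phrase, shifted by one). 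So the fix is merely to strengthen the inductive hypothesis to ``conflations for all $n\ge m$''; then $Z_m$ is the kernel of the already-established deflation $p_m$, the factorization of $(0,d^A_{m-1})$ through $Z_m$ is legitimate because the inflation $Z_{m+1}\rightarrowtail\mathsf{Cone}(f)_{m+1}$ is a monomorphism, and your step supplies the conflation at degree $m-1$, so the hypothesis persists. You gesture at this at the end (``so the inductive hypothesis persists''), but as written the hypothesis you state is not the one that persists. Once the indexing is corrected, the argument is complete.
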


We also need the following standard result in the context of exact categories.

\begin{lem}
\label{lemisomderhomcat}
Let $\A$ be an exact category. Let $A$ be a complex in $\A$ and $P$ a bounded above complex of projectives. Then $\Hom_{\mD(\A)}(P,A)\cong \Hom_{\mk(\A)}(P,A)$.
\end{lem}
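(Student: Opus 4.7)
The plan is to reduce the statement to the single vanishing
\[
\Hom_{\mk(\A)}(P, Z) \;=\; 0
\]
for every acyclic complex $Z$ over $\A$, and then obtain the bijection by the Verdier calculus of fractions expressing $\mD(\A)$ as the localisation of $\mk(\A)$ at quasi-isomorphisms.

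First I would prove this vanishing by constructing a contracting homotopy termwise. After shifting we may assume $P^n=0$ for $n>0$. Given a chain map $f\colon P\lxr Z$, the goal is to produce maps $h^n\colon P^n\lxr Z^{n-1}$ for $n\le 0$ with $f^n = d_Z^{n-1} h^n + h^{n+1} d_P^{n}$ (setting $h^{1}=0$). At the top, $f^1=0$ forces $d_Z^{0}f^{0}=0$, so $f^{0}$ factors through the inflation $Z_{-1}\rat Z^0$ appearing in the conflation $Z_{-1}\rat Z^0\tra Z_0$ witnessing acyclicity. Since $P^0$ is projective and the factor $p^{-1}\colon Z^{-1}\tra Z_{-1}$ is a deflation, this factorisation lifts along $p^{-1}$ to $h^0$ with $d_Z^{-1} h^0 = f^0$. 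Descending induction is identical after replacing $f^{n-1}$ by $f^{n-1}-h^n d_P^{n-1}$: the chain-map identity together with the inductive assumption shows $d_Z^{n-1}(f^{n-1}-h^n d_P^{n-1})=0$, one factors through the cycle $Z_{n-2}\rat Z^{n-1}$, and lifts through $P^{n-1}$'s projectivity against the deflation $Z^{n-2}\tra Z_{n-2}$.

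Now I combine this with Verdier localisation. Every class in $\Hom_{\mD(\A)}(P,A)$ is represented by a left roof $P\xleftarrow{s}P'\xrightarrow{g}A$ with $s$ a quasi-isomorphism, so its cone $C(s)$ is acyclic. Applying $\Hom_{\mk(\A)}(P,-)$ to the triangle $P'\xrightarrow{s}P\lxr C(s)\lxr P'[1]$ and invoking the vanishing $\Hom_{\mk(\A)}(P,C(s))=0$ yields a section $\sigma\colon P\lxr P'$ with $s\sigma=\iden_P$ in $\mk(\A)$. Then $g\sigma$ represents the given class and hence lies in the image of the canonical map $\Hom_{\mk(\A)}(P,A)\lxr \Hom_{\mD(\A)}(P,A)$, giving surjectivity. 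For injectivity, if $f\colon P\lxr A$ vanishes in $\mD(\A)$, there exists a quasi-isomorphism $t\colon A\lxr A'$ with $tf=0$ in $\mk(\A)$; rotating the triangle $A\xrightarrow{t}A'\lxr C(t)\lxr A[1]$ and applying $\Hom_{\mk(\A)}(P,-)$ gives an exact sequence whose left term $\Hom_{\mk(\A)}(P,C(t)[-1])$ vanishes (as $C(t)[-1]$ is again acyclic), forcing $f=0$ already in $\mk(\A)$.

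The main obstacle is the vanishing step. In the exact-category setting one has no kernels or cokernels available as such, so the factorisation through cycles cannot be extracted from universal properties in $\A$; it must be produced from the conflations $Z_{n-1}\rat Z^n\tra Z_n$ built into the definition of acyclicity, and the lifting relies precisely on the defining property $\Proj(\A)=\mathsf{Pr}(\mathsf{Def}(\A))$ that projectives are projective against deflations. Once this technical vanishing is in hand, everything else is standard Verdier-quotient bookkeeping.
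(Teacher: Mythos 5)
Your proof is correct and is the standard argument. The paper states this lemma without proof, calling it a ``standard result in the context of exact categories,'' so there is no in-text proof to compare against; your two-step reduction (first the homotopy-category vanishing $\Hom_{\mk(\A)}(P,Z)=0$ for $Z$ acyclic, built by descending induction using the factorisation $d^n_Z = i_n p_n$ through the conflations $Z_{n-1}\rat Z^n\tra Z_n$ and lifting against deflations via projectivity of the $P^n$; then the calculus-of-fractions bookkeeping in the Verdier quotient $\mD(\A)=\mk(\A)/\mathsf{Ac}(\A)$) is exactly the proof any reader would reconstruct, and all the details check out, including the observation that one needs the vanishing against $C(s)$, $C(s)[-1]$ and $C(t)[-1]$, which are again in the thick subcategory $\mathsf{Ac}(\A)$ so the vanishing transports along the $\mk(\A)$-isomorphism to an acyclic complex.
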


We call $\A$ {\em left coherent} if every morphism $f$ in $\A$ has a 
{\em weak kernel}, that is, a morphism $g$ in $\A$ with $fg=0$ such that every $g'$ in $\A$ with $fg'=0$ factors through $g$. {\em Right coherence}, and {\em weak cokernels} are defined dually. Left and right coherent additive categories are said to be {\em coherent}. 

A finite or infinite sequence of morphisms 
\begin{equation}
\label{2}
\begin{tikzcd}
\cdots \arrow{r}  & A_{-1} \arrow[r, "a_{-1}"] & A_0 \arrow[r, "a_0"] & A_1 \arrow[r, "a_1"] & A_2 \arrow{r}{} & \cdots
\end{tikzcd}
\end{equation} 
is called {\em weak exact} if $a_n$ is a weak kernel of $a_{n+1}$ and $a_{n+1}$ is a weak cokernel of $a_n$ for all possible $n$.

An additive category $\A$ is called {\bf strongly left noetherian} \cite{Lat} if $\smod{\A}$ is abelian and noetherian. Recall that $\smod(\A)$ being noetherian means that the subobjects of any object of $\smod(\A)$ satisfy the ascending chain condition. The property of $\A$ being strongly left noetherian has been characterized in \cite[Proposition~2]{Lat}. In particular, it is equivalent to the property that for every family $(f_i)_{i\in I}$ of morphisms $f_i\colon A_i\lxr A$ in $\A$, there is a finite subset $J\subset I$ such that each $f_i$ factors through the morphism $\oplus_{j\in J}A_j\lxr A$. If $\smod{\A^{\op}}$ is abelian and noetherian, then $\A$ is called {\bf strongly right noetherian} and $\A$ is {\bf strongly noetherian} if it is strongly left noetherian and strongly right noetherian.

Recall that an additive category $\A$ is Krull-Schmidt if every object of $\A$ decomposes into a finite direct sum of objects with local endomorphism rings. The decompositions into indecomposable objects are unique, up to isomorphism, and the radical $\Rad(\A)$ is generated by the non-invertible morphisms between indecomposables. We denote by $\ind(\A)$ a representative system of indecomposable objects. Following the terminology of \cite{RumpFCMhdim}, a projective and injective object in an exact category is called {\em bijective}. Such an object $B$ is called {\em tame} if there is either a right almost split map ending in $B$ or a left almost split map starting from $B$. We now recall the latter notions. From \cite[Corollary~2]{RumpFCMhdim} we know that any morphism $f\colon A\lxr B$ in $\A$ is of the form $(0 \ m)\colon A_0\oplus A_1\lxr B$ where the morphism $m$ is right minimal and the decomposition $A=A_0\oplus A_1$ is unique up to isomorphism. Recall that a morphism $m\colon A_1\lxr B$ is {\em right minimal} \cite{ARS} if every endomorphism $g\colon A_1\lxr A_1$ with $mg=m$ is invertible. Also, a morphism $f\colon A\lxr B$ in $\A$ is called {\em right almost split} if $f$ is a radical morphism and every other radical morphism $A'\lxr B$ factors through $f$. Thus a right almost split map $f$ is of the form $(0 \ m)$ with $m$ right minimal and right almost split. In this case the morphism $m$ is called a {\em sink map}. Dually we define {\em left almost split morphisms} and {\em source maps}.

One of the key ideas for the criteria of Cohen-Macaulay finiteness in higher dimension \cite{RumpFCMhdim} by the second author is the notion of $L$-functors. Roughly speaking, the second author has proved that the existence of almost split sequences in an exact category $\A$ implies the existence of an adjunction $(L, L^{-})$ on the homotopy category $\mathsf{M}(\A)$. The latter adjunction gives rise to an augmentation morphism $\lambda\colon L\lxr \iden_{\A}$ and given an object $a$ in $\mathsf{M}(\A)$, an iterated application of $L$ gives what is called a {\em left ladder}$\colon$$\cdots\lxr L^2a\lxr La\lxr a$. If every such left ladder is finite, then $\mathsf{M}(\A)$ is called {\em left $L$-finite}. Using the right adjoint $L^{-1}$ we have the notion of {\em right ladder} and $\mathsf{M}(\A)$ being {\em right $L$-finite}. When $\mathsf{M}(\A)$ is left and right $L$-finite then $\mathsf{M}(\A)$ is called {\em $L$-finite}. For more details we refer to \cite{T, RumpLfunctors, add, RumpFCMhdim}. 

We can now state the following result due to second author, see \cite[Theorem~1]{RumpFCMhdim}.

\begin{thm}
\label{thmRumpThm1}
Let $\A$ be an Ext-category with the Krull-Schmidt property. Assume that $\ind\PP$ and $\ind\II$ are finite, that $\End_{\A}(A)$ is noetherian for all objects $A$ in $\A$, and that the indecomposable bijectives are tame. The following are equivalent$\colon$
\begin{enumerate}
\item The number $\ind(\A)$ is finite.

\item $\A$ is strongly noetherian.

\item $\A/[\PP]$ is strongly left noetherian and $\A/[\II]$ is strongly right noetherian.

\item $\Ext(\A)$ is a length category.

\item $\A$ has almost split sequences and $\mathsf{M}(\A)$ is $L$-finite.
\end{enumerate}
\end{thm}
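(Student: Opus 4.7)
The plan is to establish the five conditions equivalent via the cycle $(1)\Rightarrow(2)\Rightarrow(3)\Rightarrow(4)\Rightarrow(5)\Rightarrow(1)$. The underlying tools are: the Yoneda-type identification $\smod(\A)\simeq \smod(\End_{\A}(A))$ whenever $A$ is an additive generator of $\A$; Proposition~\ref{RumpProp2Extab}, which identifies $\Ext(\A)$ with $\smod(\underline{\A})$, $\underline{\A}=\A/[\PP]$; and the theory of $L$-functors recalled in subsection~\ref{subsectionacyclic}, which converts categorical finiteness inside $\mathsf{M}(\A)$ into the numerical finiteness of $\ind(\A)$.

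\textbf{The first three implications.} For $(1)\Rightarrow(2)$, take $A$ to be a direct sum of one representative from each class in $\ind(\A)$. The Krull--Schmidt property together with the hypothesis that each $\End_{\A}(X)$ is noetherian makes $\End_{\A}(A)$ a noetherian ring, hence the Yoneda equivalence $\smod(\A)\simeq\smod(\End_{\A}(A))$ makes $\smod(\A)$ a noetherian abelian category. The dual argument applied to $\A^{\op}$ yields the right-hand half of strong noetherianity. For $(2)\Rightarrow(3)$ one checks directly that the characterisation of strong left noetherianity through finite factoring subfamilies of morphisms $(f_i\colon A_i\to A)$ descends to the quotient $\A/[\PP]$ (since any factorisation in $\A$ survives modulo an ideal), and dually for $\A/[\II]$. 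For $(3)\Rightarrow(4)$, Proposition~\ref{RumpProp2Extab} gives $\Ext(\A)\simeq \smod(\underline{\A})$, which is noetherian by the left half of~(3); the right half of~(3), combined with the dual description $\Ext(\A)\simeq\com(\A/[\II])$, forces $\Ext(\A)$ also to be artinian, so it is a length category.

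\textbf{From length to $L$-finiteness.} For $(4)\Rightarrow(5)$, simple objects in the length category $\Ext(\A)\simeq\smod(\underline{\A})$ correspond, by the classical Auslander correspondence between simple functors and almost split sequences, to almost split sequences in $\A$. The boundary cases, where a simple functor is supported on an indecomposable projective or injective, are handled by the tameness hypothesis on the indecomposable bijectives together with the finiteness of $\ind\PP$ and $\ind\II$. Finite composition length of each object of $\Ext(\A)$ then forces the iterated applications of $L$ and $L^{-1}$ to any object of $\mathsf{M}(\A)$ to terminate after finitely many steps, giving left and right $L$-finiteness.

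\textbf{The main obstacle: $(5)\Rightarrow(1)$.} This is the delicate step. Given $X\in\ind(\A)$, the strategy is to use $L$-finiteness to deduce that both the left ladder $\cdots\to L^{2}X\to LX\to X$ and the dual right ladder terminate in indecomposable bijectives; since $\ind\PP\cup\ind\II$ is finite and every tame indecomposable bijective is the source or target of a uniquely determined almost split morphism, each $X$ is reachable from this finite set in a uniformly bounded number of almost-split steps. The combinatorics of these ladders, controlled by the adjoint pair $(L,L^{-1})$ on $\mathsf{M}(\A)$ and by the Krull--Schmidt structure of the tame bijectives, must then be shown to produce only finitely many distinct indecomposables. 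This quantitative passage from a purely categorical finiteness (the termination of ladders) to a cardinality bound on $\ind(\A)$ is the heart of \cite[Theorem~1]{RumpFCMhdim}, and is where I expect the main technical work to concentrate.
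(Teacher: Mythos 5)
The paper does not prove Theorem~\ref{thmRumpThm1}; it is quoted verbatim from \cite[Theorem~1]{RumpFCMhdim} with a citation, so there is no internal argument in this paper to compare against. Your outline of $(1)\Rightarrow(2)\Rightarrow(3)\Rightarrow(4)$ is plausible and uses the right ingredients: the equivalence $\smod(\A)\simeq\smod(\End_\A(A))$ for an additive generator $A$ of $\A$, descent of the finite-factoring criterion for strong left noetherianity from $\A$ to $\A/[\PP]$ (families lift along the quotient functor, factorisations survive in the quotient), and Proposition~\ref{RumpProp2Extab} together with its dual to conclude that $\Ext(\A)$ is both noetherian and artinian.

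The genuine gaps lie in $(4)\Rightarrow(5)$ and especially $(5)\Rightarrow(1)$. For $(4)\Rightarrow(5)$, the claim that finite composition lengths in $\Ext(\A)$ force the termination of $L$-ladders is asserted without a bridge: one needs a lemma exhibiting the ladder $\cdots\to L^2a\to La\to a$ as, say, a descending chain of subobjects inside a fixed object of the length category $\Ext(\A)\simeq\smod(\underline{\A})$, and this is precisely where the content of the $L$-functor machinery does its work; you only invoke it. For $(5)\Rightarrow(1)$ you honestly flag the gap yourself, but the missing idea is concrete: $L$-finiteness gives termination of each \emph{individual} ladder, not a uniform bound on their lengths, and you give no mechanism by which the finiteness of $\ind\PP\cup\ind\II$ and the tameness of the indecomposable bijectives convert this into finiteness of $\ind(\A)$. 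What one expects is (a) a reachability argument, using tameness, showing every indecomposable of $\A$ appears on some ladder rooted at an indecomposable bijective, and (b) a K\"{o}nig-type argument: ladders branch locally finitely by Krull--Schmidt, each branch terminates by $L$-finiteness, so the tree rooted at the finitely many bijectives is finite and exhausts $\ind(\A)$. As written, this key implication is a description of where a proof should go rather than a proof.
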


As a consequence of the above, we have the following important result (see \cite[Corollary~1]{RumpFCMhdim}) which is used extensively in Section~\ref{sectionAuslandertyperesult}.

\begin{cor}
\label{CorRumpCor1}
Let $\A$ be a left Ext-category with left almost split sequences such that $\End_{\A}(A)$ is right noetherian for all objects $A$ in $\A$. Consider the following statements$\colon$
\begin{enumerate}
\item The number $\ind(\A)$ is finite.

\item $\A$ is strongly left noetherian.

\item $\A/[\PP]$ is strongly left noetherian.

\item $\Ext(\A)$ is noetherian.

\item $\Ext(\A)$ is a length category.

\item $\A$ has almost split sequences and $\mathsf{M}(\A)$ is left $L$-finite.
\end{enumerate}
Then \textnormal{(i)}$\Longrightarrow$\textnormal{(ii)}$\Longrightarrow$\textnormal{(iii)}$\Longrightarrow$\textnormal{(iv)}
$\Longrightarrow$\textnormal{(v)}$\Longrightarrow$\textnormal{(vi)}.
\end{cor}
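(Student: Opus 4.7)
The plan is to prove the six implications in turn, with the middle equivalence $(\text{iii})\Leftrightarrow(\text{iv})$ being essentially a consequence of Proposition~\ref{RumpProp2Extab}, which identifies $\Ext(\A) \simeq \smod(\underline{\A})$. The remaining four implications require separate arguments exploiting the left Ext-category structure, the noetherianness of each $\End_{\A}(A)$, and (crucially for the last steps) the existence of left almost split sequences.

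For $(\text{i})\Rightarrow(\text{ii})$, I would pick representatives $\ind(\A)=\{X_1,\ldots,X_n\}$ and form $P:=X_1\oplus\cdots\oplus X_n\in \A$. Then $P$ is a projective generator for $\Mod(\A)$ via a Yoneda-type equivalence $\Mod(\A) \simeq \Mod(\End_\A(P)^{\op})$; because the standing hypothesis applies to $P$ itself, $\End_\A(P)$ is right noetherian, so the category of finitely presented right $\End_\A(P)$-modules is abelian and noetherian, which transports back to $\smod(\A)$ being abelian and noetherian. For $(\text{ii})\Rightarrow(\text{iii})$ I would use the factorization characterization of strong left noetherianness recalled in Subsection~\ref{subsectionacyclic}: a family $(\bar f_i\colon \underline A_i \to \underline A)_{i\in I}$ in $\underline{\A}$ lifts to a family $(f_i\colon A_i\to A)_{i\in I}$ in $\A$; applying (ii) yields a finite $J\subset I$ such that every $f_i$ factors through $\oplus_{j\in J}A_j\to A$, and this factorization survives in the additive quotient $\underline{\A}=\A/[\PP]$. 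Abelianness of $\smod(\underline{\A})$ is guaranteed by Proposition~\ref{RumpProp2Extab}. Then $(\text{iii})\Rightarrow(\text{iv})$ is immediate from that same proposition, since $\Ext(\A)\simeq \smod(\underline{\A})$.

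The implication $(\text{iv})\Rightarrow(\text{v})$ is the main obstacle, as it requires upgrading a noetherian condition to a finite length one; equivalently, I must establish the descending chain condition from the ascending chain condition on $\Ext(\A)$. The plan is to show that left almost split sequences provide enough simple subobjects in $\smod(\underline{\A})$: a left almost split map $A\rightarrowtail B$ in $\A$ yields a simple functor $S_A:=\Hom_{\underline{\A}}(A,-)/\Rad\Hom_{\underline{\A}}(A,-)$ in $\smod(\underline{\A})$, and every representable surjects onto such a simple top. A standard functor-category argument then produces a non-trivial socle inside every non-zero finitely presented functor. Given this, the ascending socle filtration $\soc F\subseteq \soc^2 F \subseteq \cdots$ of any $F\in \Ext(\A)$ has non-trivial successive quotients whenever $F/\soc^n F\neq 0$; by noetherianness this filtration stabilizes, and by the enough-simples property stabilization forces exhaustion, giving a finite composition series. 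The delicate point, which I expect to require genuine use of the left almost split hypothesis rather than mere left coherence, is verifying that simple objects are plentiful enough; this is where the proof interacts most intimately with the $L$-functor machinery and where I anticipate the main technical effort.

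Finally, for $(\text{v})\Rightarrow(\text{vi})$, if $\Ext(\A)\simeq \smod(\underline{\A})$ is a length category, then simple objects abound, and the standard identification of simples in $\smod(\underline{\A})$ with almost split sequences in $\A$ yields an almost split sequence ending in each non-projective indecomposable (and similarly starting at each non-injective indecomposable appearing in the relevant simple), so $\A$ has almost split sequences. To show $\mathsf{M}(\A)$ is left $L$-finite, I would translate an iterated left ladder $\cdots \to L^2 a \to La \to a$ into a strictly descending chain of subobjects (or socle layers) of a corresponding object in $\Ext(\A)$, which must terminate by the finite length hypothesis. This last translation uses the connection between the augmentation $\lambda\colon L\to \iden_\A$ and the left almost split structure recalled in Subsection~\ref{subsectionacyclic}, and once set up, termination is immediate from $(\text{v})$.
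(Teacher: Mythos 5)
The paper does not actually prove Corollary~\ref{CorRumpCor1}: it is quoted verbatim from \cite[Corollary~1]{RumpFCMhdim} and used as a black box, so there is no in-paper argument to measure your proposal against; it has to be judged on its own. Your treatment of (i)$\Rightarrow$(ii)$\Rightarrow$(iii)$\Rightarrow$(iv) is sound. The Morita-type reduction $\smod(\A)\simeq\smod(\End_\A(P)^{\op})$ for an additive generator $P=\bigoplus\ind(\A)$ (using, implicitly, the Krull--Schmidt structure that the source-map hypothesis presupposes), the lifting of a family of morphisms from $\underline{\A}=\A/[\PP]$ back to $\A$ together with the factorization criterion of \cite[Proposition~2]{Lat}, and the identification $\Ext(\A)\simeq\smod(\underline{\A})$ of Proposition~\ref{RumpProp2Extab} all do what you want.

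The gap sits exactly where you anticipate it, in (iv)$\Rightarrow$(v), but it is more than a deferred technicality. You claim that a source map $A\rightarrowtail B$ yields the simple object $S_A:=\Hom_{\underline{\A}}(A,-)/\Rad\Hom_{\underline{\A}}(A,-)$ of $\smod(\underline{\A})$. This has the wrong variance: by the paper's convention, $\smod(\underline{\A})$ consists of coherent \emph{contravariant} functors $\underline{\A}^{\op}\to\Ab$, whereas $\Hom_{\underline{\A}}(A,-)$ is covariant. If you instead take the contravariant $\Hom_{\underline{\A}}(-,A)/\Rad\Hom_{\underline{\A}}(-,A)$, you hit a second obstruction: that object is a simple \emph{quotient} of the projective $\Hom_{\underline{\A}}(-,A)$, not a simple subobject, and its coherence is controlled by a \emph{right} almost split map ending at $A$, which is not among your hypotheses. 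Your socle-filtration argument (noetherian plus enough simple subobjects implies length) is correct, but it genuinely needs simple \emph{subobjects}: enough simple quotients are not sufficient, since a noetherian abelian category with enough simple quotients need not be a length category (finitely generated abelian groups). So the single substantive claim you need --- that left almost split sequences furnish a simple subfunctor of every nonzero object of $\smod(\underline{\A})$ --- is unproved, and the variance slip indicates the correct construction has not yet been identified. Until that is filled in, (iv)$\Rightarrow$(v), and hence the tail of the chain, does not go through.
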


We close this subsection with the following important result. In particular, this is \cite[Theorem~2]{RumpFCMhdim} and is one of the key ingredients for proving Beligiannis representation-finiteness result for subcategories, see Corollary~\ref{corBelartinianring}. Recall from \cite{RumpFCMhdim} that a Krull-Schmidt exact category $\A$ is called {\em Auslander-Reiten} if every object $A$ in $\A$ admits a sink and a source map.

\begin{thm}
\label{thmRumpThm2}
Let $\A$ be a left Ext-category with almost split sequences and $\PP:=\Proj(\A)$ such that $\ind(\PP)$ is finite, $\End_{\A}(A)$ is noetherian for all objects $A$ in $\A$, and that indecomposable bijectives are tame. Assume that $\mathsf{M}(\A)$ is left $L$-finite. Consider the following statements$\colon$
\begin{enumerate}
\item $\mathsf{M}(\A)$ is $L$-finite with $\ind(\Inj(\A))$ finite.

\item $\A$ is an Auslander-Reiten category and $\A$, viewed as a subcategory of $\smod(\PP)$, is contravariantly finite.

\item The stable category $\A/[\PP]$ has a weak cogenerator. 

\item $\ind(\A)$ is finite.
\end{enumerate}
Then$\colon$\textnormal{(i)$\Longrightarrow$(ii)$\Longrightarrow$(iii)$\Longrightarrow$(iv)}.
\end{thm}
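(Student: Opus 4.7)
The plan is to traverse the chain $(\text{i})\Rightarrow(\text{ii})\Rightarrow(\text{iii})\Rightarrow(\text{iv})$, leaning on the $L$-functor formalism of Subsection~\ref{subsectionacyclic} and on the identification $\Ext(\A)\simeq\smod(\underline{\A})$ from Proposition~\ref{RumpProp2Extab}. The guiding principle is that $L$-finiteness of $\mathsf{M}(\A)$ encodes homotopically the termination of ladders of almost split morphisms, while finiteness of various families of indecomposables is what is actually exploited, via Corollary~\ref{CorRumpCor1}, to force $\ind(\A)$ to be finite.

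\textbf{First implication.} For $(\text{i})\Rightarrow(\text{ii})$, right $L$-finiteness of $\mathsf{M}(\A)$ combined with $\ind(\Inj(\A))$ finite produces source maps by iterating the right adjoint $L^{-1}$: each right ladder $A\to L^{-1}A\to L^{-2}A\to\cdots$ terminates, which dually to the sink-map construction yields a left minimal radical morphism out of every object. Together with the sink maps that left $L$-finiteness already affords, this makes $\A$ an Auslander-Reiten category. Contravariant finiteness of $\A$ in $\smod(\PP)$ then follows by presenting each $F\in\smod(\PP)$ as a cokernel of a morphism in $\PP$: since $\ind\PP$ is finite and $\PP$ is left coherent, this presentation lifts to a conflation in $\A$ whose middle term provides the required $\A$-approximation of $F$.

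\textbf{Second implication.} This is the heart of the proof. Given the Auslander-Reiten structure on $\A$ and contravariant finiteness of $\A$ in $\smod(\PP)$, I would construct a weak cogenerator in $\A/[\PP]$ as follows: contravariant $\A$-approximation of the (finitely many) simple objects of $\smod(\underline{\A})\simeq\Ext(\A)$ produces a finite family of objects in $\A$ whose direct sum $C$ has the property that every nonzero morphism in $\A/[\PP]$ admits a nonzero composition with a morphism into $C$. The verification uses that any radical morphism factors through the source map at its target and that such factorizations terminate by left $L$-finiteness of $\mathsf{M}(\A)$; Krull-Schmidt and the noetherian endomorphism hypothesis then ensure that only finitely many summands of $C$ are needed.

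\textbf{Third implication and main obstacle.} For $(\text{iii})\Rightarrow(\text{iv})$, Corollary~\ref{CorRumpCor1} applies: a weak cogenerator $C$ in $\A/[\PP]$, combined with the noetherian endomorphism hypothesis and Krull-Schmidt, forces $\A/[\PP]$ to be strongly left noetherian and hence to have only finitely many indecomposables (each indecomposable maps nontrivially into some summand of $C$, and the cogenerator property closes off the induction). Adjoining the finitely many indecomposable projectives gives $\ind(\A)$ finite. The most delicate step I expect is the construction of the weak cogenerator at $(\text{ii})\Rightarrow(\text{iii})$: although contravariant finiteness in $\smod(\PP)$ supplies the raw approximations, translating these into a genuine weak cogenerator in the stable category requires a careful interplay of the Auslander-Reiten structure and the abelian structure on $\Ext(\A)\simeq\smod(\underline{\A})$, without the a priori assumption that $\ind(\Inj(\A))$ is finite.
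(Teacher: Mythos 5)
The paper does not actually prove this theorem: it is quoted verbatim from \cite[Theorem~2]{RumpFCMhdim} and stated here without proof (the sentence preceding the statement says ``this is \cite[Theorem~2]{RumpFCMhdim}''). So there is no ``paper's own proof'' to compare against, and your proposal would have to stand on its own merits as an independent argument. As such, it has a fatal flaw in the final step.

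Your implication $(\text{iii})\Rightarrow(\text{iv})$ invokes Corollary~\ref{CorRumpCor1}, but that corollary runs in precisely the opposite direction: it says $\ind(\A)$ finite $\Rightarrow\dots\Rightarrow\mathsf{M}(\A)$ left $L$-finite. Nothing in Corollary~\ref{CorRumpCor1} lets you deduce $\ind(\A)$ finite from a weak cogenerator in $\A/[\PP]$; that reversal is precisely the non-trivial content of the present theorem (it is what closes the circle of equivalences when combined with Corollary~\ref{CorRumpCor1}). Quoting Corollary~\ref{CorRumpCor1} here is therefore circular, and the parenthetical sketch you give (``each indecomposable maps nontrivially into some summand of $C$, and the cogenerator property closes off the induction'') is not an argument --- it is the assertion to be proved, since a priori there could be infinitely many indecomposables each admitting a nonzero stable map into $C$. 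What is really needed is a genuinely representation-theoretic argument, e.g.\ showing that under the hypotheses (left $L$-finiteness, tame bijectives, noetherian endomorphism rings) the existence of a weak cogenerator forces $\Ext(\A)\simeq\smod(\underline{\A})$ to be a length category, after which Theorem~\ref{thmRumpThm1} closes the loop. That step is exactly what you have skipped.

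The earlier implications also have soft spots you should tighten. In $(\text{i})\Rightarrow(\text{ii})$, your argument for contravariant finiteness of $\A$ in $\smod(\PP)$ --- ``the presentation lifts to a conflation in $\A$ whose middle term provides the required $\A$-approximation'' --- does not work as stated: a projective presentation $P_1\to P_0\to F$ has middle term in $\PP$, which does not obviously give a right $\A$-approximation of an arbitrary $F\in\smod(\PP)$, and finiteness of $\ind(\PP)$ alone is not a substitute. The $L$-finiteness hypothesis has to enter here substantively, not just to supply source maps. And in $(\text{ii})\Rightarrow(\text{iii})$, you take ``the (finitely many) simple objects of $\smod(\underline{\A})$'' as your raw material, but finiteness of the simples in $\Ext(\A)$ is not among the hypotheses and is close in strength to what you are trying to prove; the construction of the weak cogenerator needs to be grounded directly in the Auslander-Reiten structure and the contravariant finiteness, not in an unproven finiteness of the socle of $\Ext(\A)$.
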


\subsection{Gorenstein-Projective Objects}
\label{subsectiongorprojectives}
In this last subsection, we briefly recall the notion of Gorenstein-projective modules over a ring \cite{EJ} that is used later in the paper, and we fix notation.

Let $R$ be a ring. An acyclic complex of projective $R$-modules   
\[
\begin{tikzcd}
P^{\bullet}\colon \  \cdots \arrow[r]  & P_{-1} \arrow[r] & P_0 \arrow[r] & P_1 \arrow[r] & \cdots
\end{tikzcd}
\]
is called totally acyclic, if the complex $\Hom_{R}(P^{\bullet}, P)$ is acyclic for every projective $R$-module $P$. An $R$-module $M$ is called {\em Gorenstein-projective} if it is of the form $M=\Coker(P_{-1}\lxr P_0)$ for some totally acyclic complex $P^{\bullet}$ of projective $R$-modules. We denote by $\GProj(R)$ the full subcategory of $\Mod(R)$ consisting of the Gorenstein projective left $R$-modules. By $\Gproj(R)$ we denote the subcategory of the finitely generated Gorenstein projectives.

It is well known that the category $\GProj(R)$ of Gorenstein projectives is a Frobenius exact category with coproducts. Hence, the category $\GProj(R)$ is an Ext-category with coproducts and the projectives coincides with the injectives. Similarly, the subcategory $\Gproj(R)$ is an Ext-category. We remark that the above definition of Gorenstein projectives makes sense for any abelian category with enough projectives. The abelian version of the latter notion is used in Proposition~\ref{p18} where we consider the subcategory $\GProj{(\smod(\PP))}$.

\section{The derived category of an Ext-category}
\label{sectionthreederextcat}

Let $\A$ be an exact category. We denote by $\mathsf{Ac}(\A)$ the full triangulated subcategory of the homotopy category $\mathsf{K}(\A)$ consisting of all complexes which are isomorphic to acyclic complexes, see subsection~\ref{subsectionacyclic}. The derived category $\mD(\A)$ of $\A$ is defined to be the localization $\mk(\A)/\mathsf{Ac}(\A)$ (see Neeman \cite{Nee} and Keller \cite{Kel}).

For any $n\in\mathbb{Z}$, the objects of $\mD(\A)$ isomorphic to complexes $A$ with $A_m=0$ for $m>n$ determine a full subcategory $\mD^{\le n}(\A)$ of $\mD(\A)$. Similarly, we define $\mD^{\ge n}(\A)$ and write $\mD^{<n}(\A):=\mD^{\le n-1}(\A)$ and $\mD^{>n}(\A):=\mD^{\ge n+1}(\A)$. The same notation will be applied to $\mk(\A)$.

We make the following definition.

\begin{defn}
\label{d3}
Let $\A$ be an Ext-category and $n$ an integer. 
\begin{enumerate}
\item A complex (\ref{2}) over $\A$ is called {\bf left exact at $n$} if every map $f\colon P\lxr A_n$ with $a_nf=0$ and $P\in\Proj(\A)$ factors through $a_{n-1}$.

\item A complex (\ref{2}) over $\A$ is called {\bf right exact at $n$} if every map $g\colon A_n\lxr I$ with $ga_{n-1}=0$ and $I\in\Inj(\A)$ factors through $a_n$. 

\item We say that (\ref{2}) is {\bf right exact} (resp. {\bf left exact}) if it is right (resp. left) exact at all $n\in\mathbb{Z}$. If the complex is left and right exact, we call it {\bf exact}. 
\end{enumerate}
\end{defn}

Clearly, every acyclic complex is exact. In the derived category $\mD(\A)$, exactness can be detected as follows. Recall that we write $\mathscr{P}:=\Proj(\A)$ and $\mathscr{I}:=\Inj(\A)$ for the projective and injective objects, repsectively. 

\begin{prop}
\label{p12}
Let $\A$ be an Ext-category. Assume that $\mathscr{P}$ is left coherent. For a complex $A$ over $\A$, the following statements are equivalent$\colon$
\begin{enumerate}
\item $A$ is left exact at $n\leq 0$.

\item $\Hom_{\mk(\A)}(P,A)=0$ for every $P\in\mk^{\leq 0}(\PP)$. 

\item $\Hom_{\mD(\A)}(B,A)=0$ for every $B\in\mD^{\leq 0}(\A)$. 
\end{enumerate}        
\begin{proof}
(i)$\Longrightarrow$(ii): Since $A$ is left exact at $n\leq 0$, it follows easily using Definition~\ref{d3} that any morphism of complexes $P\lxr A$ is homotopic to zero.

(ii)$\Longrightarrow$(iii): For any $B$ in $\mk^{\leq 0}(\A)$, by Lemma~\ref{Keller'slemma} there is a triangle  
\[
\begin{tikzcd}
Z[-1] \arrow{r} & P \arrow{r} & B \arrow{r} & Z 
\end{tikzcd}
\]
with acyclic $Z\in\mk^{\leq 0}(\A)$ and $P\in \mk^{\leq 0}(\PP)$. Hence, Lemma~\ref{lemisomderhomcat} implies that $\Hom_{\mD(\A)}(A,B)=0$.

(iii)$\Longrightarrow$(i): Take $B$ to be a stalk complex at some $n$ over $\PP$. By Lemma~\ref{lemisomderhomcat}, any morphism in $\Hom_{\mD(\A)}(B,A)$ can be represented by a morphism in $\mk(\A)$. Hence $\Hom_{\mk(\A)}(B,A)=0$. This shows that $A$ is left exact at $n\leq 0$.
\end{proof}
\end{prop}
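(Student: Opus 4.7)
The plan is to prove the equivalence by the cyclic chain (i)$\Rightarrow$(ii)$\Rightarrow$(iii)$\Rightarrow$(i). The unifying theme is that left exactness at a single degree is precisely the infinitesimal form of nullhomotopy for chain maps out of projectives, while Lemma~\ref{lemisomderhomcat} is the bridge between $\mk(\A)$-morphisms and $\mD(\A)$-morphisms whenever the source is a bounded-above complex of projectives.

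For (i)$\Rightarrow$(ii): given a chain map $f\colon P\to A$ with $P\in\mk^{\leq 0}(\PP)$, I build a nullhomotopy $s_n\colon P_n\to A_{n-1}$ inductively, descending from $n=0$. In degree $0$ the vanishing $P_1=0$ forces $a_0f_0=0$, so left exactness at $0$ and $P_0\in\PP$ yield $s_0$ with $a_{-1}s_0=f_0$. At the inductive step the corrected map $f_n-s_{n+1}p_n$ (with $p_n$ the differential of $P$) satisfies $a_n(f_n-s_{n+1}p_n)=0$ by a direct cochain calculation, and left exactness at $n$ furnishes the next homotopy term. This is routine diagram chasing.

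For (iii)$\Rightarrow$(i): given $f\colon Q\to A_n$ with $Q\in\PP$, $n\leq 0$, and $a_nf=0$, I view $f$ as a chain map from the stalk complex $Q$ concentrated in degree $n$ to $A$. The stalk lies in $\mk^{\leq 0}(\A)$, hence in $\mD^{\leq 0}(\A)$, so (iii) kills the induced morphism in $\mD(\A)$. Since the stalk is a bounded-above complex of projectives, Lemma~\ref{lemisomderhomcat} identifies $\Hom_{\mD(\A)}$ with $\Hom_{\mk(\A)}$, forcing $f$ to be nullhomotopic. Unwinding the homotopy yields $s\colon Q\to A_{n-1}$ with $f=a_{n-1}s$, the required factorization.

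The main obstacle is (ii)$\Rightarrow$(iii). The plan is to represent $B\in\mD^{\leq 0}(\A)$ by a complex in $\mk^{\leq 0}(\A)$ and apply Lemma~\ref{Keller'slemma}(i) to obtain a triangle $Z[-1]\to P\to B\to Z$ in $\mk^{-}(\A)$ with $Z$ acyclic and $P$ a complex of projectives. The critical technical point is to arrange $P\in\mk^{\leq 0}(\PP)$ rather than merely bounded above; this is where the left coherence of $\PP$ enters, since it allows weak kernels of the constructed projective resolution to be resolved within $\PP$ without spilling into positive degrees. Once $P$ is in $\mk^{\leq 0}(\PP)$, the acyclicity of $Z$ gives $P\cong B$ in $\mD(\A)$, and Lemma~\ref{lemisomderhomcat} delivers $\Hom_{\mD(\A)}(B,A)\cong\Hom_{\mk(\A)}(P,A)$, which vanishes by (ii). The rest is formal; the delicate bit is really the degree-bounded form of the projective resolution.
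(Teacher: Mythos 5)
Your proof follows the same cyclic chain (i)$\Rightarrow$(ii)$\Rightarrow$(iii)$\Rightarrow$(i) as the paper and fills in the implicit details correctly; the inductive construction of the nullhomotopy in (i)$\Rightarrow$(ii), the stalk-complex argument in (iii)$\Rightarrow$(i), and the use of Keller's lemma plus Lemma~\ref{lemisomderhomcat} in (ii)$\Rightarrow$(iii) all match the intended argument. One point deserves correction: you attribute the fact that the projective complex $P$ can be taken in $\mk^{\leq 0}(\PP)$ to the left coherence of $\PP$, but this is not where coherence enters. The construction underlying Lemma~\ref{Keller'slemma}(i) builds $P$ degree by degree from the top, using only that $\A$ has enough projectives; if $B$ vanishes in degrees $>0$ then so does $P$, with no need for weak kernels or for $\smod(\PP)$ to be abelian. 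Left coherence is a standing hypothesis in this section for other reasons (e.g.\ so that the subsequent Proposition~\ref{p13} and the embedding $\A\hra\smod(\PP)$ make sense) and plays no role in the degree bound you were worried about.
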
 

Let $\mD_{\le n}(\A)$ denote the full subcategory of 
$\mD(\A)$ consisting of the complexes which are right exact at $m>n$. Similarly, $\mD_{\ge n}(\A)$ stands for the full subcategory of
complexes over $\A$ which are left exact at $m<n$. Thus 
$\mD^{\le n}(\A)\subs\mD_{\le n}(\A)$ and 
$\mD^{\ge n}(\A)\subs\mD_{\ge n}(\A)$. As usual, if $\C$ is a full subcategory of a triangulated category $\T$, the right Hom-orthogonal subcategory $\{A\in \T \ | \ \Hom_{\T}(\C,A)=0\}$ of $\T$ is denoted by $\C^\perp$, and the left Hom-orthogonal subcategory $\{A\in \T \ | \ \Hom_{\T}(A,\C)=0\}$ of $\T$ is denoted by ${}^\perp\C$. By Proposition~\ref{p12} and its
dual, we have
\begin{equation}
\label{11}
\mD_{\le n}(\A)={}^\perp\mD^{>n}(\A),\qquad  
\mD_{\ge n}(\A)=\mD^{<n}(\A)^\perp.
\end{equation} 

Assume that the additive category $\mathscr{P}$ is left coherent, that is, $\smod(\mathscr{P})$ is an abelian category (see \cite[Theorem~3.2]{Fr}). We call a full subcategory $\C$ of $\smod(\mathscr{P})$ with $\mathscr{P}\subs\C$ {\em resolving} \cite{AuBr} if for any short exact sequence $0\lxr L\lxr M\lxr N\lxr 0$ in $\smod(\mathscr{P})$ with $N\in\C$, the middle term $M$ is isomorphic to an object in $\C$ if and only if $L$ is isomorphic to an object in $\C$. Auslander and Bridger \cite{AuBr} also require that $\C=\add\C$ which we do not assume. In particular, we show below that a resolving subcategory inherits an exact structure from $\smod(\mathscr{P})$ (see also \cite[Proposition~14]{RumpFCMhdim}).

\begin{prop}
\label{p13}
Let $\A$ be an Ext-category with $\mathscr{P}:=\Proj(\A)$ left coherent. 
Then there are full embeddings 
\[
\begin{tikzcd}
\A \arrow[r, hook] & \smod(\PP) \arrow[r, hook] & \mD(\A)
\end{tikzcd}
\]
such that $\A$ becomes a resolving subcategory of $\smod(\mathscr{P})$ with the induced exact structure, and there is an equivalence
\begin{equation}
\label{12}
\smod(\mathscr{P})\simeq \mD^{\le 0}(\A)\cap\mD_{\ge 0}(\A).  
\end{equation} 
\begin{proof}
For any object $A\in\A$ there is a conflation $A'\stackrel{i}{\rat} P_0
\stackrel{p}{\tra} A$ with $P_0\in\Proj(\A)$, and a deflation $q\colon
P_1\lxr A'$ with $P_1\in \Proj(\A)$. Then $iq\colon P_1\lxr P_0$ defines an object $J(A)\in\smod(\mathscr{P})$. This gives a well-defined additive functor $J\colon\A\lxr\smod(\mathscr{P})$ which is fully faithful. By the Horseshoe lemma, see \cite[Theorem~12.8]{Buhler}, it is straightforward to verify that $J$ maps conflations in $\A$ to short exact sequences in $\smod(\mathscr{P})$.

Let $L\stackrel{a}{\rat} M\stackrel{b}{\tra} B$ be a short exact sequence 
in $\smod\mathscr{P}$ with $B$ in $\A$. Assume first that $M$ lies in $\A$. Since $\A$ is divisive and every morphism $P\lxr B$ with $P\in\mathscr{P}$ factors through $b$, Lemma~\ref{lemdivisive} implies that $b$ is a deflation in $\A$. Since $J$ is exact, we infer that $L$ is isomorphic to an object in $\A$. 

Now assume that $L$ is in $\A$. Choose an epimorphism $p\colon P\lxr M$ in $\smod(\mathscr{P})$ with $P\in\mathscr{P}$. Since every morphism $Q\lxr B$ with $Q\in\mathscr{P}$ factors through $bp$, Lemma~\ref{lemdivisive} implies that $bp$ is a deflation in $\A$. So we get a pullback diagram$\colon$
\[
\begin{tikzcd}
C \arrow[d, rightarrowtail, "j"] \arrow[r, equal] & C \arrow[d, rightarrowtail, "ij"] &  \\
A \arrow[r, rightarrowtail, "i"] \arrow[d, twoheadrightarrow, "q"] & P \arrow[d, twoheadrightarrow, "p"] \arrow[r, twoheadrightarrow] & B \arrow[d, equal] \\
L \arrow[r, rightarrowtail, "a"] & M \arrow[r, twoheadrightarrow, "b"] & B 
\end{tikzcd}
\]
with an inflation $i\in\A$. Since every $Q\lxr L$ with $Q\in\mathscr{P}$ factors through $q$, we infer as above that $q$ is a deflation in $\A$. Hence the composition $ij$ is an inflation, which shows that $M$ is isomorphic to an object in $\A$. This proves that $\A$ is a resolving subcategory of $\smod(\mathscr{P})$. Moreover, it follows that the exact structure of $\A$ is induced by that of $\smod(\mathscr{P})$.

By Lemma~\ref{Keller'slemma}, the full embedding 
$\smod(\mathscr{P})\hra \mD(\A)$ which associates a projective resolution to any object of $\smod(\mathscr{P})$ verifies (\ref{12}). 
\end{proof}
\end{prop}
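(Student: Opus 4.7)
The plan is to first build the functor $J\colon \A\lxr \smod(\PP)$ explicitly via two-step projective presentations, then show that it sends conflations to short exact sequences and identifies $\A$ with a resolving subcategory of $\smod(\PP)$, and finally realise $\smod(\PP)$ as the indicated intersection inside $\mD(\A)$.

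For the construction of $J$, I would take an object $A\in\A$, choose a deflation $P_0\tra A$ with $P_0\in\PP$, form its kernel $A'\rat P_0$, and then a further deflation $P_1\tra A'$ with $P_1\in\PP$; the composite $P_1\lxr P_0$ is the desired object of $\smod(\PP)$. Independence of choices and functoriality follow from the standard comparison lemma for partial projective resolutions, which uses only the lifting property of projectives against deflations; fullness and faithfulness are immediate, since any morphism between presentations lifts to a morphism $A\lxr B$ and two such lifts differ by a homotopy that lies in $[\PP]$. That $J$ sends conflations to short exact sequences follows from the Horseshoe lemma in exact categories (\cite{Buhler}), which produces a conflation of two-term presentations whose image in $\smod(\PP)$ is exact.

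Next I would prove the resolving property by treating the two directions separately, for a short exact sequence $L\rat M\tra B$ in $\smod(\PP)$ with $B\in\A$. Assuming $M\in\A$, I observe that every morphism $P\lxr B$ with $P\in\PP$ factors through $M\lxr B$ because the sequence is exact in $\smod(\PP)$; Lemma~\ref{lemdivisive} then upgrades $M\lxr B$ to a deflation in $\A$, whose kernel must be $L$. Assuming instead that $L\in\A$, I would choose an epimorphism $P\lxr M$ in $\smod(\PP)$ with $P\in\PP$ and apply Lemma~\ref{lemdivisive} again to conclude that the composite $P\lxr B$ is a deflation in $\A$; then a pullback diagram along $L\rat M$ exhibits its pullback kernel as an object of $\A$ (by the case just handled), and composing with the resulting inflation into $P$ displays $M$ as an object of $\A$. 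The main obstacle throughout is the careful bookkeeping needed to ensure that each candidate map is in fact a deflation in $\A$, which each time reduces to checking that every test morphism out of a projective factors through it.

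Finally, for the equivalence $\smod(\PP)\simeq \mD^{\le 0}(\A)\cap \mD_{\ge 0}(\A)$, I would send $M\in\smod(\PP)$ to any projective resolution concentrated in non-positive degrees. Right exactness of this complex at positive degrees is automatic, and Proposition~\ref{p12} together with its dual, combined with the identification (\ref{11}), places the image in the intersection. Essential surjectivity uses Lemma~\ref{Keller'slemma}(i) to replace an object of the intersection by a quasi-isomorphic bounded-above complex of projectives, whose only surviving cohomology (by the left-exactness at $n<0$) sits in degree zero and so determines an object of $\smod(\PP)$; fullness and faithfulness of this assignment reduce, via Lemma~\ref{lemisomderhomcat}, to a computation of morphisms between projective complexes in $\mk(\A)$.
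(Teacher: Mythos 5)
Your proposal matches the paper's proof essentially step for step: the same two-step projective presentation defining $J$, the same appeal to the Horseshoe lemma for exactness of $J$, the same two-case argument for the resolving property via Lemma~\ref{lemdivisive} and the pullback square, and the same use of Lemmas~\ref{Keller'slemma} and~\ref{lemisomderhomcat} for the equivalence $\smod(\PP)\simeq\mD^{\le 0}(\A)\cap\mD_{\ge 0}(\A)$. The only cosmetic deviation is in the second resolving case, where you recycle the first case to place the kernel $C$ in $\A$, whereas the paper applies Lemma~\ref{lemdivisive} once more to the induced map $q\colon A\lxr L$; both routes are correct and amount to the same verification.
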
 

We leave it to the reader to dualize the results of this section. For example, the right-hand analogue of (\ref{12}) is
\[
\com(\II)\simeq \mD^{\ge 0}(\A)\cap \mD_{\le 0}(\A).
\]

\begin{defn}\label{d4}
Let $\A$ be an additive category with full subcategories $\B$ and $\C$ 
which are closed under isomorphism. Consider the following diagram
\[
\begin{tikzcd}
 & \A &   \\
\B \arrow[ur, hook] \arrow[rr, "G"]  &  & \C \arrow[ll, bend left, "H"] \arrow[ul, hook] 
\end{tikzcd}
\]
where $G$ and $H$ are additive functors. We call $(\B,\C)$ an {\bf adjoint pair}  if every object $B\in\B$ admits a morphism $\lambda_B\colon B\lxr GB$ with  $GB\in\C$ such that each $B\lxr C'$ with $C'\in\C$ factors uniquely through $\lambda_B$, and every object $C\in\C$ admits a morphism $\rho_C\colon HC\lxr C$ with $HC\in\B$ such that each $B'\lxr C$ with $B'\in\B$ factors uniquely 
through $\rho_C$. 
\end{defn}

For example, a reflective full subcategory $\C\hra\A$, i.e. the inclusion has a left adjoint, is equivalent to an
adjoint pair $(\A,\C)$. In general, every adjoint pair $(\B,\C)$ gives
rise to an adjunction $G\dashv H$ between $\B$ and $\C$. Indeed, any pair of
objects $B\in\B$ and $C\in\C$ determines a commutative diagram
\begin{equation}\label{13}
\begin{tikzcd}
B \arrow[d, "f'"] \arrow[r, "\lambda_B"] & GB \arrow[d, "f"]   \\
HC \arrow[r, "\rho_C"] & C  
\end{tikzcd}
\end{equation}
which gives a bijection $f\mapsto f'$ between $\Hom_{\C}(GB,C)$ and 
$\Hom_{\B}(B,HC)$. The unit $\eta\colon \iden_{\B}\lxr HG$ of the adjunction satisfies $\rho G\circ\eta =\lambda$.

More generally, we say that $(\C_0,\ldots,\C_n)$ is an {\bf adjoint sequence} of full subcategories $\C_i$ if the $(\C_i,\C_{i+1})$ with $0\le i<n$ are adjoint pairs such that each morphism $C_i\lxr C_j'$ with 
$C_i\in\C_i$, $C_j'\in\C_j$ and $0\le i<j\le n$ factors uniquely through the 
composed morphism 
\[
\begin{tikzcd}
C_i \arrow[r, "\lambda_{C_i}"] & C_{i+1} \arrow[r] & \cdots \arrow[r] & C_{j-1} \arrow[r, "\lambda_{C_{j-1}}"] & C_j.  
\end{tikzcd}
\]
Note that the latter condition is left-right symmetric so that $(\C_i,\C_j)$ 
is an adjoint pair for all $0\le i<j\le n$.

\begin{defn}
\label{d5}
Let $(\B,\C)$ and $(\B',\C')$ be adjoint pairs in an additive 
category $\A$. We say that $(\B',\C')$ {\bf extends} $(\B,\C)$ if 
$(\B',\B,\C,\C')$ is an adjoint sequence where $\B$ is a reflective 
subcategory of $\B'$ and $\C$ is a coreflective subcategory of $\C'$.
\end{defn} 

Before we apply these concepts to the derived category, we note the following.

\begin{prop}
\label{p14}
Any extension $(\B',\C')$ of an adjoint pair $(\B,\C)$ in an additive 
category satisfies $\B'\cap\C'=\B\cap\C$. 
\begin{proof}
Let $X$ be an object in $\B'\cap\C'$. Since $(\B',\B,\C,\C')$ is an adjoint sequence, the identity morphism $\iden_X$ factors as follows$\colon$
\[
\begin{tikzcd}
\B'\ni X \arrow[rrr, bend right, "\iden_X"]  \arrow[r, "\lambda_{X}"] & B \arrow[r, "\lambda_B"] & C \arrow[r, "\lambda_{C}"] & X \in \C' 
\end{tikzcd}
\]
The adjoint pair $(\C,\C')$ arises from the coreflective subcategory $\C\hra\C'$. This implies that the morphism $\lambda_C$ is an isomorphism and therefore $X$ is isomorphic to an object in $\C$. By symmetry, we obtain that $X$ lies in $\B\cap\C$, i.e. $\B'\cap \C'\subseteq \B\cap\C$. Similarly, we show that
$\B\cap \C\subseteq \B'\cap\C'$.
\end{proof}
\end{prop}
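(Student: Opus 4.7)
The easy inclusion $\B\cap\C\subseteq\B'\cap\C'$ follows from $\B\subseteq\B'$ and $\C\subseteq\C'$. For the reverse, I take $X\in\B'\cap\C'$ and apply the adjoint-sequence property of $(\B',\B,\C,\C')$ to $\iden_X$, regarded as a morphism from $X\in\B'$ to $X\in\C'$. This yields a unique factorization
\[
\iden_X = \lambda_C\circ\lambda_B\circ\lambda_X,
\]
where $\lambda_X\colon X\to B$ and $\lambda_B\colon B\to C$ are the canonical units of the pairs $(\B',\B)$ and $(\B,\C)$ (so that $B\in\B$ and $C\in\C$), and $\lambda_C\colon C\to X$ is the uniquely determined factoring morphism produced by the sequence property.

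The main step is to prove that $\lambda_C$ is an isomorphism. Write $\varphi:=\lambda_B\circ\lambda_X\colon X\to C$. By the adjoint-sequence property applied to the pair $(\B',\C)$, the morphism $\varphi$ is the unit of this adjunction at $X$, so every map $X\to C''$ with $C''\in\C$ factors uniquely through $\varphi$. Composing $\iden_X=\lambda_C\circ\varphi$ on the left with $\varphi$ yields $\varphi=(\varphi\circ\lambda_C)\circ\varphi$, and comparing with the tautological factorization $\varphi=\iden_C\circ\varphi$ forces $\varphi\circ\lambda_C=\iden_C$ by uniqueness. Together with $\lambda_C\circ\varphi=\iden_X$, this exhibits $\varphi$ as a two-sided inverse of $\lambda_C$, so that $X\cong C\in\C$ and hence $X\in\C$ by closure under isomorphism.

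To obtain $X\in\B$ as well, I would run the dual of the argument in the opposite category $\A^{\op}$. The sequence $(\B',\B,\C,\C')$ dualises to $((\C')^{\op},\C^{\op},\B^{\op},(\B')^{\op})$ in $\A^{\op}$, with $\C^{\op}$ now reflective in $(\C')^{\op}$ and $\B^{\op}$ coreflective in $(\B')^{\op}$ — that is, precisely the set-up of Definition~\ref{d5} with the roles of $\B$ and $\C$ interchanged. The previous reasoning applied in $\A^{\op}$ therefore gives $X\in\B$, and we conclude $\B'\cap\C'\subseteq\B\cap\C$. The main conceptual obstacle is to recognise that the final arrow $\lambda_C$ in the factorization is not itself an adjunction unit but a uniquely determined factoring morphism, and that producing its inverse requires two independent invocations of uniqueness: once to identify $\varphi$ as a section of $\lambda_C$ via the given factorization of $\iden_X$, and once to identify $\varphi$ as a retraction of $\lambda_C$ via the uniqueness of factorizations through $\varphi$ itself.
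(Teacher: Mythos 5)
Your proof is correct and follows the same route as the paper's: factor $\iden_X$ through the adjoint sequence $(\B',\B,\C,\C')$, show the map $\lambda_C\colon C\lxr X$ is invertible, conclude $X\in\C$, and dualize to obtain $X\in\B$. Where you go further than the paper is in justifying the invertibility of $\lambda_C$. The paper's appeal to coreflectivity of $\C\hra\C'$, read literally, only identifies the \emph{unit} $C\lxr GC$ of the pair $(\C,\C')$ with an isomorphism; it does not by itself explain why the factoring morphism landing in $X$ is invertible. Your two-step argument---$\lambda_C\circ\varphi=\iden_X$ coming from the factorization, then $\varphi\circ\lambda_C=\iden_C$ coming from the \emph{uniqueness} of factorizations through the composed unit $\varphi$ of the pair $(\B',\C)$, applied to the equality $\varphi=\iden_C\circ\varphi=(\varphi\circ\lambda_C)\circ\varphi$---supplies exactly the missing justification, and is what the paper compresses into one line. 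It is also worth noting that merely observing $X$ to be a retract of $C\in\C$ would not suffice here in a general additive category (idempotents need not split), so exhibiting the two-sided inverse is genuinely necessary, which is what your argument does. Your set-up of the dual step, correctly noting that passing to $\A^{\op}$ interchanges the reflective and coreflective roles in Definition~\ref{d5}, is likewise fine.
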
 

Our aim is to understand via the notion of an adjoint pair the structure of  $\smod\PP$ and $\com\II$ as a pair of subcategories in the derived category $\mD(\A)$. To proceed we need the following preliminary result.

\begin{lem}
\label{lemfactorisationprop}
Let $\A$ be an Ext-category with $\mathscr{I}:=\Inj(\A)$ right coherent. Then any morphism $f\colon A\lxr B$ in $\A$ admits a 
morphism $g\colon B\lxr I$ with $I\in\II$ and $gf=0$ such that each morphism $g'\colon B\lxr I'$ with $I'\in\II$ and $g'f=0$ factors through $g$.
\begin{proof}
Let $f\colon A\lxr B$ be a morphism in $\A$ and choose an acyclic complex
$0\lxr A\stackrel{a}{\lxr} I_0\stackrel{i}{\lxr} I_1\lxr A'\lxr 0$ in $\A$ with $I_0, I_1$ in $\II$. Let $b\colon B\lxr I_2$ be an inflation with $I_2$ in $\II$. Then there is a morphism $j\colon I_0\lxr I_2$ such that $bf=ja$. Since $\II$ is right coherent, there exists a weak cokernel $(j'\; i')\colon I_1\oplus I_2\lxr I$ of $\binom{-i}{j}\colon I_0\lxr I_1\oplus I_2$ in $\II$. In particular, we have the following commutative diagram$\colon$
\[
\begin{tikzcd}
A \arrow[r, rightarrowtail, "a"] \arrow[d, "f"] & I_0 \arrow[r, "i"] \arrow[d, "j"] & I_1 \arrow[d,"j'"]  \\
B \arrow[r, rightarrowtail, "b"] & I_2 \arrow[r, "i'"] & I  
\end{tikzcd}
\]
We claim that the map $g:=i'b\colon B\lxr I$ meets the requirement. Clearly, $gf=0$. Let $g'\colon B\lxr I'$ with $I'\in\II$ such that $g'f=0$. We show that $g'$ factors through $g$. First, there is a map $b'\colon I_2\lxr I'$ such that $b'b=g'$. We write $i=k'k$ for the canonical factorisation of the map $i\colon I_0\tra K \rat I_1$ . Since $b'ja=0$, there is a morphism $l\colon K\lxr I'$ such that $lk=b'j$. Then we also get a morphism $l'\colon I_1\lxr I'$ such that $l'k'=l$. Then, since $(l'\; b')\binom{-i}{j}=0$, there is a morphism $c\colon I\lxr I'$ making the following diagram commutative$\colon$
\[
\begin{tikzcd}
I_0 \arrow[r, rightarrowtail, "\binom{-i}{j}"]  & I_1\oplus I_2 \arrow[r, "(j'\; i')"] \arrow[d, "(l'\; b')"] & I \arrow[dl, bend left, "c"]  \\
 & I' &   
\end{tikzcd}
\]
Then the morphism $c$ satisfies $cg=g'$ and this completes the proof.
\end{proof}
\end{lem}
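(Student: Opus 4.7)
The plan is to build $g$ by combining an injective embedding of $B$ with a weak cokernel inside $\II$, which is available thanks to the right coherence of $\II$. First, using that $\A$ has enough injectives, I would choose an inflation $a\colon A \rat I_0$ and extend it to the first two steps of an injective resolution $A \rat I_0 \xrightarrow{i} I_1$ (so $ia = 0$ and this short complex is acyclic), and I would choose an inflation $b\colon B \rat I_2$ with $I_2\in\II$. Injectivity of $I_2$ then allows me to extend $bf\colon A \to I_2$ along $a$ to a morphism $j\colon I_0 \to I_2$ with $ja = bf$.

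Next I would invoke the right coherence of $\II$ to produce a weak cokernel $(j'\;i')\colon I_1 \oplus I_2 \to I$ of the morphism $\binom{-i}{j}\colon I_0 \to I_1 \oplus I_2$ inside $\II$; this yields the key identity $i'j = j'i$. The candidate is $g := i'b\colon B \to I$, and the computation $gf = i'bf = i'ja = j'ia = 0$ (using $ia = 0$) confirms that $gf = 0$.

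The main obstacle is the universal factorisation property: given $g'\colon B \to I'$ with $I'\in\II$ and $g'f = 0$, I must produce $c\colon I \to I'$ with $cg = g'$. My strategy is to construct a morphism $(l'\;b')\colon I_1 \oplus I_2 \to I'$ annihilated by $\binom{-i}{j}$, and then let the weak cokernel property produce $c$. For $b'$ I would use injectivity of $I'$ to extend $g'$ along the inflation $b$, so that $b'b = g'$. Then $b'ja = b'bf = g'f = 0$, so writing $i = k'k$ as the canonical factorisation of $i$ through its image $K$ (a deflation $k$ followed by an inflation $k'$), the vanishing $b'ja = 0$ lets $b'j$ descend along $k$ (which is the cokernel of $a$ by acyclicity at $I_0$) to some $l\colon K \to I'$; injectivity of $I'$ then extends $l$ along the inflation $k'$ to $l'\colon I_1 \to I'$. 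A direct check yields $l'i = b'j$, hence $(l'\;b')\binom{-i}{j} = 0$, and the weak cokernel property delivers $c\colon I \to I'$ with $c(j'\;i') = (l'\;b')$; in particular $cg = ci'b = b'b = g'$, as required. The care needed here is purely in tracking the several injective extensions and in ensuring that $k$ genuinely serves as the cokernel of $a$, which is guaranteed by the exact structure of $\A$.
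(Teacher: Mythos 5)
Your proposal follows the paper's proof step by step: the same acyclic complex $A\rightarrowtail I_0\to I_1$, the same inflation $b\colon B\rightarrowtail I_2$ and lift $j$, the same weak cokernel $(j'\;i')$ of $\binom{-i}{j}$ furnished by right coherence, and the same use of the canonical factorisation $i = k'k$ together with injectivity of $I'$ to build $(l'\;b')$ and invoke the weak-cokernel property. The argument is correct and matches the paper's, with only minor extra narration (e.g.\ spelling out $gf = i'ja = j'ia = 0$ and noting that $k$ is the cokernel of $a$).
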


\begin{prop}
\label{p15}
Let $\A$ be an Ext-category with $\mathscr{P}:=\Proj(\A)$ left coherent
and $\mathscr{I}:=\Inj(\A)$ right coherent. Then 
\[
\big(\!\smod(\PP), \com(\II)\big) 
\]
is an adjoint pair in the derived category $\mD(\A)$ which extends to an adjoint pair $(\mD^{\le 0}(\A),\mD^{\ge 0}(\A))$.
\end{prop}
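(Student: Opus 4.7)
\emph{Plan.} I will establish the adjoint pair $(\smod(\PP),\com(\II))$ in $\mD(\A)$ directly using the factorisation lemma (Lemma~\ref{lemfactorisationprop}), and then extend it to $(\mD^{\le 0}(\A),\mD^{\ge 0}(\A))$ by truncating projective resolutions and injective coresolutions supplied by Lemma~\ref{Keller'slemma}.

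\emph{Construction of $G\colon\smod(\PP)\to\com(\II)$.} Given $M\in\smod(\PP)$ represented by a two-term complex $P^{-1}\xrightarrow{d}P^0$ with $P^i\in\PP$, I apply Lemma~\ref{lemfactorisationprop} to $d$ to obtain a universal morphism $e\colon P^0\to I^0$ in $\II$ with $ed=0$, and then to $e$ to obtain a universal $\partial\colon I^0\to I^1$ in $\II$ with $\partial e=0$. I set $GM:=[I^0\to I^1]\in\com(\II)$ and let $\lambda_M\colon M\to GM$ be the chain map whose only nonzero component is $e$ in degree $0$; the identities $ed=0$ and $\partial e=0$ are exactly the needed cocycle conditions. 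Dually I construct $H$ with counit $\rho_N$.

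\emph{Verification of universality.} By Lemma~\ref{lemisomderhomcat}, any $\phi\colon M\to N$ in $\mD(\A)$ with $N\in\com(\II)$ (embedded via its injective coresolution) is represented modulo homotopy by a chain map out of the projective resolution of $M$, determined by a single datum $f^0\colon P^0\to N^0$ satisfying $f^0d=0$ and $\partial_N f^0=0$. The first condition factorises $f^0=g^0 e$ via the universal property of $I^0$; two choices $g^0,g^{0\prime}$ differ by a map $I^0\to N^0$ killing $e$, which, by the universal property of $\partial$, equals $h^1\partial$ for some $h^1\colon I^1\to N^0$ — precisely a chain homotopy. The second condition yields, via the universal property of $I^1$, a unique $g^1\colon I^1\to N^1$ with $g^1\partial=\partial_N g^0$, producing a chain map $g\colon GM\to N$ with $\phi=g\lambda_M$ and hence the bijection $\Hom_{\mD(\A)}(M,N)\cong\Hom_{\mD(\A)}(GM,N)$. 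This is the key delicate step: Lemma~\ref{lemfactorisationprop} provides only existence, and uniqueness in $\mD(\A)$ is recovered because the ambiguity of one factorisation is absorbed by a homotopy coming from the next universal step.

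\emph{Extension.} For the reflection $R\colon\mD^{\le 0}\to\smod(\PP)$: given $X\in\mD^{\le 0}$, Lemma~\ref{Keller'slemma}(i) supplies a projective resolution $P\to X$ with $P\in\mathsf{K}^{\le 0}(\PP)$; I define $RX\in\smod(\PP)$ to be represented by $P^{-1}\to P^0$. The reflection $X\cong P\to RX$ comes from a chain map $P\to P_{RX}$ (with $P_{RX}$ the canonical projective resolution of $RX$) equal to the identity in degrees $0$ and $-1$ and extended in lower degrees inductively, each lift existing because $P^{-k}$ is projective and the relevant composition $\alpha^{-k+1}d_P^{-k}$ is killed by $d_{RX}^{-k+1}$, so factors through $d_{RX}^{-k}$ by left exactness of $P_{RX}$. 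Universality follows from the orthogonality $\Hom_{\mD(\A)}(\mD^{<0}(\A),\mD^{\ge 0}(\A))=0$, a direct consequence of Proposition~\ref{p12}. Dually one obtains the coreflection $\mD^{\ge 0}\to\com(\II)$. Combining these with the adjunction from the previous step verifies that $(\mD^{\le 0},\smod(\PP),\com(\II),\mD^{\ge 0})$ is an adjoint sequence as in Definition~\ref{d5}, yielding the claimed extension.
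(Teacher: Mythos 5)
Your proposal takes essentially the same route as the paper: you build the unit $\lambda_M$ by iterating Lemma~\ref{lemfactorisationprop} to produce the first two terms $e\colon P^0\to I^0$ and $\partial\colon I^0\to I^1$ of the universal $\II$-coresolution (the paper constructs the whole coresolution at once from right coherence, but it is the same data), and you obtain the extension by truncating a Keller-type projective resolution, exactly as in the paper. Your write-up is in fact more explicit than the paper's on the universality of $\lambda_M$, and the argument is sound.

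One small slip worth correcting: for the reflection $\mD^{\le 0}(\A)\to\smod(\PP)$ you invoke the orthogonality $\Hom_{\mD(\A)}(\mD^{<0}(\A),\mD^{\ge 0}(\A))=0$, but $\smod(\PP)$ is \emph{not} contained in $\mD^{\ge 0}(\A)$ (it sits in $\mD^{\le 0}(\A)\cap\mD_{\ge 0}(\A)$); the orthogonality you actually need is $\Hom_{\mD(\A)}(\mD^{<0}(\A),\mD_{\ge 0}(\A))=0$, i.e.\ the first equation in $(\ref{11})$, which indeed follows from Proposition~\ref{p12}. Likewise, in the universality step the factorisation through $\partial$ only provides the degree-$1$ component of a homotopy; one should note that the remaining components exist by the same universal property applied at higher degrees of the coresolution of $N$, so that the full chain map $GM\to N$ is well defined modulo homotopy.
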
 
\begin{proof}
Let 
\[
\begin{tikzcd}
\cdots \arrow[r] & P_{-2} \arrow[r, "a_{-2}"] & P_{-1} \arrow[r, "a_{-1}"] & P_0 \arrow[r] & 0  
\end{tikzcd}
\]
be an object  $M$ in $\smod(\mathscr{P})\hra \mD(\A)$. Since $\mathscr{I}$ is right coherent, there are morphisms $P_0\stackrel{e}{\lra} I_0\ra I_1\ra I_2\ra\cdots$ such that for any object $I\in\II$ the following complex 
\[
\begin{tikzcd}[cramped, sep=scriptsize] 
\cdots \arrow[r] & \Hom_{\A}(I_1,I) \arrow[r] & \Hom_{\A}(I_0,I) \arrow[r] & \Hom_{\A}(P_0,I) \arrow[r] & \Hom_{\A}(P_{-1},I)  
\end{tikzcd}
\]
 is exact. Hence the complex 
\[
\begin{tikzcd}
0 \arrow[r] & I_0 \arrow[r] & I_{1} \arrow[r] & I_2 \arrow[r] & \cdots  
\end{tikzcd}
\]
is an object $C$ in $\com(\II)$. Moreover, the map $e\colon P_0\lxr I_0$ induces a morphism $\lambda_M\colon M\ra C$ in $\mD(\A)$ such that every morphism $M\lxr C'$ with $C'\in\com\II$ factors uniquely through $\lambda_M$. By symmetry, this shows that $(\smod(\PP),\com(\II))$ is an adjoint pair.

By Lemma~\ref{Keller'slemma} (i), every object in $\mD^{\le 0}(\A)$ is isomorphic to a complex of the form$\colon$$\cdots\ra P_{-1}\stackrel{a_{-1}}{\lra} P_0\ra 0$, which determines an object $\Coker{a_{-1}}\in\smod(\PP)$. This assignment gives a left adjoint $\mD^{\le 0}(\A)\lxr\smod(\PP)$ to the inclusion functor
$\smod(\PP)\hra \mD^{\le 0}(\A)$, i.e.\ $\smod(\PP)$ is a reflective subcategory of $\mD^{\le 0}(\A)$. Similarly, $\com(\II)$ is a coreflective full subcategory of $\mD^{\ge 0}(\A)$. Moreover, any morphism $P\lxr I$ in $\mD(\A)$ with $P$ in $\mD^{\le 0}(\A)$ and $I$ in $\mD^{\ge 0}(\A)$ factors uniquely through $\lambda_P\colon P\lxr P'$ with $P'$ in $\smod(\PP)$, and every $M\lxr I$ with $M$ in $\smod(\PP)$ and $I$ in $\mD^{\ge 0}(\A)$ factors uniquely through 
$\lambda_M\colon M\lxr C$ with $C$ in $\com(\II)$. Hence the tuple $(\mD^{\le 0}(\A), \smod(\PP), \com(\II), \mD^{\ge 0}(\A))$ is an adjoint sequence in $\mD(\A)$. 
\end{proof}

\begin{cor}
For an Ext-category $\A$ with $\PP:=\Proj(\A)$ left coherent and $\II:=\Inj(\A)$ right coherent, the following equivalence holds in 
$\mD(\A)\colon$
\[
\A\simeq \smod(\PP)\cap \com(\II)\simeq \mD^{\le 0}(\A)\cap \mD^{\ge 0}(\A).
\]      
\begin{proof}
By Proposition~\ref{p14}, it is enough to verify the first equivalence.
For an object $A$ in $\smod(\PP)\cap \com(\II)$ with projective
resolution $P$ and injective resolution $I$, there is a morphism 
$P\lxr I$ in $\mk(\A)$ with an acyclic mapping cone. Hence the object $A$ lies in $\A$.
\end{proof}
\end{cor}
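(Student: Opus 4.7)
The plan is to combine Propositions~\ref{p13}, \ref{p14}, and \ref{p15} with a mapping-cone calculation to establish the two equivalences.

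For the second equivalence $\smod(\PP)\cap \com(\II)\simeq \mD^{\le 0}(\A)\cap \mD^{\ge 0}(\A)$, I would simply observe that Proposition~\ref{p15} produces an adjoint sequence $(\mD^{\le 0}(\A),\smod(\PP),\com(\II),\mD^{\ge 0}(\A))$ in $\mD(\A)$, exhibiting $(\mD^{\le 0}(\A),\mD^{\ge 0}(\A))$ as an extension of the adjoint pair $(\smod(\PP),\com(\II))$ in the sense of Definition~\ref{d5}. Proposition~\ref{p14} then immediately yields the desired equality of intersections.

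For the first equivalence $\A\simeq \smod(\PP)\cap\com(\II)$, the inclusion $\A\subseteq \smod(\PP)\cap\com(\II)$ is Proposition~\ref{p13} together with its dual: $\A$ embeds as a resolving subcategory of $\smod(\PP)$ inside $\mD(\A)$, and dually as a coresolving subcategory of $\com(\II)$. For the reverse inclusion, given $A$ in $\smod(\PP)\cap\com(\II)$, I would exhibit $A$ in $\mD(\A)$ by two representatives simultaneously: a projective resolution $P=(\cdots\to P_{-1}\to P_0\to 0)$ coming from its presentation in $\smod(\PP)$, and an injective coresolution $I=(0\to I_0\to I_1\to\cdots)$ coming from its presentation in $\com(\II)$. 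Both represent $A$ in $\mD(\A)$, so the canonical isomorphism $P\simeq I$ in $\mD(\A)$ lifts by Lemma~\ref{lemisomderhomcat} to a morphism $\phi\colon P\to I$ in $\mk(\A)$, and since $\phi$ is an isomorphism in $\mD(\A)$, its mapping cone is acyclic. Explicitly, the cone is the splice
\[
\cdots \to P_{-1}\to P_0 \xrightarrow{\phi_0} I_0 \to I_1 \to \cdots.
\]

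The main obstacle is extracting a genuine object of $\A$ from this acyclic complex. I would exploit acyclicity at degree $-1$ to factor the differential $P_{-1}\to P_0$ as a deflation $P_{-1}\twoheadrightarrow Z_{-1}$ followed by an inflation $Z_{-1}\rightarrowtail P_0$, so that the conflation $Z_{-1}\rightarrowtail P_0\twoheadrightarrow Z_0$ certifies $Z_0$ as an object of $\A$. Acyclicity at degree $0$ simultaneously factors $\phi_0$ as $P_0\twoheadrightarrow Z_0\rightarrowtail I_0$. Hence $Z_0$ carries both a deflation $P_0\twoheadrightarrow Z_0$ with kernel $Z_{-1}$ (making the 2-term complex $P_{-1}\to P_0$ equal to the projective presentation $J(Z_0)$ of Proposition~\ref{p13}) and an inflation $Z_0\rightarrowtail I_0$ with cokernel the next syzygy (giving the corresponding presentation in $\com(\II)$). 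Therefore $A\simeq Z_0$ in $\mD(\A)$, completing the proof.
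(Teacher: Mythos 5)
Your proposal is correct and follows essentially the same route as the paper: reduce the two equalities to the first one via Proposition~\ref{p14} and the adjoint sequence from Proposition~\ref{p15}, then realize $A$ by its projective resolution $P$ and injective coresolution $I$, lift the isomorphism $P\cong I$ in $\mD(\A)$ to a map $\phi\colon P\to I$ in $\mk(\A)$ using Lemma~\ref{lemisomderhomcat}, and read off an object of $\A$ from the acyclic mapping cone. You flesh out the last step (which the paper compresses to one sentence) by identifying the cone with the splice $\cdots\to P_{-1}\to P_0\xrightarrow{\phi_0} I_0\to\cdots$, factoring $\phi_0$ through a conflation $Z_{-1}\rightarrowtail P_0\twoheadrightarrow Z_0$, and observing that $J(Z_0)$ recovers the projective presentation of $A$; this is a helpful unpacking but not a different argument.

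One small remark on precision, which applies equally to the paper's own wording: the inference ``$\phi$ is an isomorphism in $\mD(\A)$, hence its cone is acyclic'' literally yields only that $\operatorname{Cone}(\phi)$ lies in $\mathsf{Ac}(\A)$, i.e.\ is isomorphic in $\mk(\A)$ to an acyclic complex, since $\mD(\A)$ is defined as the quotient by the isomorphism-closure of acyclics. Passing from ``homotopy equivalent to acyclic'' to ``acyclic'' for this specific complex (projectives in negative degrees, injectives in nonnegative degrees) relies on the divisive/idempotent-splitting properties built into the Ext-category hypotheses; you may wish to flag that step explicitly, but it is not a defect peculiar to your write-up.
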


The following corollary shows that the orthogonal pairs (\ref{11}) give rise to $t$-structures (see \cite[Lemma~6.3]{Fio}).

\begin{cor}
\label{cortstructure}
Let $\A$ be an Ext-category with $\PP:=\Proj(\A)$ left coherent and $\II:=\Inj(\A)$ right coherent. For any object $A\in \mD(\A)$ and $n\in\mathbb{Z}$, there is a triangle
\begin{equation}\label{14}
\begin{tikzcd}
\tau_{\le n} A \arrow[r] & A \arrow[r] & \tau^{>n}A \arrow[r] &  \tau_{\le n} A[1]  
\end{tikzcd}
\end{equation}  
in $\mD(\A)$ with $\tau_{\le n}A \in \mD_{\le n}(\A)$ and $\tau^{>n}A \in \mD^{>n}(\A)$.
\begin{proof}
There is an obvious triangle $A^{>0}\ra A\ra A^{\le 0}\stackrel{\delta}{\ra} 
A^{>0}[1]$ with $A^{>0}\in\mD^{>0}(\A)$ and
$A^{\le 0}\in\mD^{\le 0}(\A)$. So we have a universal morphism 
$\lambda_{A^{\le 0}}\colon A^{\le 0}\lxr C$ with $C\in\com(\II)$ according to the adjoint pair $(\mD^{\le 0}(\A), \com(\II))$. The mapping cone of $\lambda_{A^{\le 0}}$ is in $\mD_{<-1}(\A)$. Since $A^{>0}[1]$ lies in $\mD^{\ge 0}(\A)$, the morphism $\delta$ factors through $\lambda_{A^{\le 0}}$. So we get an octahedron
\[
\begin{tikzcd}
& \tau_{< 0} A \arrow[d] \arrow[r, equal] & \tau_{< 0} A \arrow[d] &  \\
A^{>0} \arrow[d, equal] \arrow[r] & A \arrow[r] \arrow[d] & A^{\le 0} \arrow[d] \arrow[r] & A^{>0}[1] \arrow[d, equal] \\
A^{>0} \arrow[r] & \tau^{\ge 0}A \arrow[r] & C \arrow[r] & A^{>0}[1] 
\end{tikzcd}
\]
with $\tau^{\ge 0} A\in \mD^{\ge 0}(\A)$ and $\tau_{<0} A\in \mD_{<0}(\A)$. This proves (\ref{14}) for $n=-1$, hence for all $n\in\mathbb{Z}$. 
\end{proof}
\end{cor}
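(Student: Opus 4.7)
My strategy is to combine the brutal truncation with the reflection machinery of Proposition~\ref{p15} via an octahedral argument. By translation it suffices to treat $n=-1$: construct a triangle
\[
\tau_{<0}A\lxr A\lxr \tau^{\ge 0}A\lxr \tau_{<0}A[1]
\]
with $\tau_{<0}A\in \mD_{<0}(\A)$ and $\tau^{\ge 0}A\in \mD^{\ge 0}(\A)$. First, I would form the brutal-truncation triangle in $\mD(\A)$,
\[
A^{>0}\lxr A\lxr A^{\le 0}\xrightarrow{\delta} A^{>0}[1],
\]
with $A^{>0}\in \mD^{>0}(\A)$ and $A^{\le 0}\in \mD^{\le 0}(\A)$, arising from the termwise-split short exact sequence of complexes. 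The right-hand end $A^{>0}$ already sits in $\mD^{\ge 0}(\A)$, but $A^{\le 0}\in \mD_{\le 0}(\A)$ is one step too coarse; the rest of the argument refines it into $\mD_{<0}(\A)$.

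Invoking Proposition~\ref{p15}, I take the universal reflection $\lambda\colon A^{\le 0}\to C$ with $C\in\com(\II)$ and complete it to a triangle $A^{\le 0}\xrightarrow{\lambda} C\lxr M\lxr A^{\le 0}[1]$. The key claim is $M\in \mD_{<-1}(\A)$. Using $\mD_{<-1}(\A)={}^\perp \mD^{\ge 0}(\A)$ from~(\ref{11}), this reduces to $\Hom_{\mD(\A)}(M,B)=0$ for every $B\in \mD^{\ge 0}(\A)$. Applying $\Hom_{\mD(\A)}(-,B)$ to the triangle yields
\[
\Hom(A^{\le 0}[1],B)\lxr \Hom(M,B)\lxr \Hom(C,B)\xrightarrow{\lambda^*}\Hom(A^{\le 0},B);
\]
the pullback $\lambda^*$ is a bijection because, in the adjoint sequence $(\mD^{\le 0}(\A),\smod(\PP),\com(\II),\mD^{\ge 0}(\A))$ of Proposition~\ref{p15}, every morphism $A^{\le 0}\to B$ with $B\in \mD^{\ge 0}(\A)$ factors uniquely through the outer reflection, which coincides with $\lambda$ since $C\in\com(\II)\subset\mD^{\ge 0}(\A)$ is already terminal in the sequence; and the left-hand group vanishes as $A^{\le 0}[1]\in \mD^{\le -1}(\A)\subset \mD_{\le -1}(\A)={}^\perp \mD^{\ge 0}(\A)$.

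Since $A^{>0}[1]\in \mD^{\ge 0}(\A)$, the same adjointness factors $\delta$ as $\delta=\mu\lambda$ for a unique $\mu\colon C\to A^{>0}[1]$. I would then apply the octahedral axiom to $\mu\circ\lambda=\delta$: the cone of $\lambda$ is $M$, the cone of $\delta$ is $A[1]$ (from rotating the brutal triangle), and letting $N$ denote the cone of $\mu$, the octahedral triangle reads $M\to A[1]\to N\to M[1]$. Rotating produces the desired triangle $M[-1]\to A\to N[-1]\to M$, where $\tau_{<0}A:=M[-1]\in \mD_{<0}(\A)$ by the shift of aisles applied to $M\in\mD_{<-1}(\A)$, and $\tau^{\ge 0}A:=N[-1]$ fits in a triangle $C\to \tau^{\ge 0}A\to A^{>0}\to C[1]$ coming from the cone of $\mu$, whence $\tau^{\ge 0}A\in \mD^{\ge 0}(\A)$ as an extension of two objects of $\mD^{\ge 0}(\A)$. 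The general case $n\in\mathbb{Z}$ follows by translation. The \emph{main obstacle} is the claim $M\in \mD_{<-1}(\A)$: it hinges on correctly interpreting the adjoint-sequence property so that $\lambda$ serves as the universal morphism into the larger aisle $\mD^{\ge 0}(\A)$; once this is secured, the octahedral assembly and vertex identifications are routine.
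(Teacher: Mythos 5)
Your overall strategy reproduces the paper's: brutal truncation, the reflection $\lambda_{A^{\le 0}}\colon A^{\le 0}\to C$, the factorization of $\delta$ through $\lambda$, and the octahedron. The one place the paper is silent and you try to supply an argument is the key assertion that the cone $M$ of $\lambda_{A^{\le 0}}$ lies in $\mD_{<-1}(\A)$, and here your argument has an off-by-one error that creates a genuine gap. You write $\mD_{<-1}(\A)={}^\perp\mD^{\ge 0}(\A)$ ``from (\ref{11}),'' but (\ref{11}) reads $\mD_{\le n}(\A)={}^\perp\mD^{>n}(\A)$, so taking $n=-1$ gives $\mD_{\le -1}(\A)={}^\perp\mD^{\ge 0}(\A)$; with the paper's indexing $\mD_{<-1}(\A)=\mD_{\le -2}(\A)={}^\perp\mD^{\ge -1}(\A)$. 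Your computation that $\Hom_{\mD(\A)}(M,B)=0$ for all $B\in\mD^{\ge 0}(\A)$ is fine and does use the adjoint sequence correctly, but it only establishes $M\in\mD_{\le -1}(\A)$, hence after shifting only $\tau_{<0}A=M[-1]\in\mD_{\le 0}(\A)$, which is one degree weaker than the required $\tau_{<0}A\in\mD_{<0}(\A)=\mD_{\le -1}(\A)$. Note that $M[-1]\in\mD_{\le -1}(\A)$ is equivalent to $M\in\mD_{\le -2}(\A)$, not to $M\in\mD_{\le -1}(\A)$.

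The gap cannot be closed by the same orthogonality computation: to get $M\in\mD_{\le -2}(\A)$ you would need $\Hom(M,B)=0$ for every $B\in\mD^{\ge -1}(\A)$, and at that level the first term $\Hom(A^{\le 0}[1],B)$ in your long exact sequence is no longer known to vanish, since $A^{\le 0}[1]\in\mD^{\le -1}(\A)\subset\mD_{\le -1}(\A)={}^\perp\mD^{\ge 0}(\A)$ only gives orthogonality against $\mD^{\ge 0}(\A)$, not against $\mD^{\ge -1}(\A)$. What does work, and is presumably what the paper intends by its unproved assertion, is a direct description of the cone: choosing a projective representative $P=(\cdots\to P_{-1}\to P_0\to 0)$ for $A^{\le 0}$ and letting $C=(0\to I_0\to I_1\to\cdots)$ be the $\II$-coresolution of $E:=\Coker(P_{-1}\to P_0)$, the cone of $\lambda$ is represented by the spliced complex $\cdots\to P_{-1}\to P_0\to I_0\to I_1\to\cdots$ with $P_0$ in degree $-1$. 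At $m=-1$ any $g\colon P_0\to I$ with $I\in\II$ and $g\circ a_{-1}=0$ factors through $E$ and then through $I_0$ because $\Hom(I_0,-)\to\Hom(E,-)$ is surjective on $\II$; for $m\ge 0$ right exactness is precisely the $\Hom(-,\II)$-exactness of the $\II$-coresolution. Thus the cone is right exact at all $m\ge -1$, i.e.\ lies in $\mD_{\le -2}(\A)=\mD_{<-1}(\A)$, as required. I would replace the orthogonality step with this concrete verification; the octahedral assembly afterwards is correct.
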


Similarly, there is the following triangle in $\mD(\A)\colon$
\begin{equation}\label{15}
\begin{tikzcd}
\tau_{<n} A \arrow[r] & A \arrow[r] & \tau_{\ge n}A \arrow[r] &  \tau_{<n} A[1]  
\end{tikzcd}   
\end{equation}  
with $\tau_{< n}A \in \mD_{< n}(\A)$ and $\tau^{\ge n}A \in \mD^{\ge n}(\A)$ according to the second equation in (\ref{11}). For exact categories
with splitting idempotents and a coherence condition (which generalizes that
for $\PP$ and $\II$ in Proposition~\ref{p15}), the existence of these 
truncation functors was proved recently by Fiorot \cite{Fio}.  Note that by (\ref{11}), the triangles (\ref{14}) and (\ref{15}) are functorial. Hence the pairs $(\mD_{\le 0}(\A),\mD^{\ge 0}(\A))$ and $(\mD^{\le 0}(\A),\mD_{\ge 0}(\A))$ are $t$-structures in $\mD(\A)$.

In \cite{RumpFCMhdim} a notion of a dimension of an Ext-category $\A$ was defined. In a sense, this dimension measures how far is $\A$ from being abelian, i.e.\ equivalent to $\smod(\PP)$. This notion is necessary for our discussion on totally acyclic complexes over exact categories in Section~\ref{section:totallyacyclic}.

We now provide the desired definition of a dimension for $\A$. The reader is advised to compare it with \cite[Section 5, Definition~5]{RumpFCMhdim}. It is basically the same, we use again the full embeddings $\A\lxr \smod(\PP)\lxr \mD(\A)$ by Proposition~\ref{p13} and our notation for the subcategory of left exact complexes.

\begin{defn}
\label{d6}
Let $\A$ be an Ext-category with $\PP:=\Proj(\A)$ left 
coherent and $\II:=\Inj(\A)$ right coherent. We define the {\bf 
dimension} $\dime\A$ of $\A$ to be the infimum of all $n\in\mathbb{N}$ with 
$\mD_{\ge n}(\A)\subs \mD^{\ge 0}(\A)$. 
\end{defn} 

The dimension $\dime\A$ lies in $\mathbb{N}\cup\{\infty\}$. By Corollary~\ref{cortstructure}, $\mD_{\ge n}(\A)
\subs \mD^{\ge 0}(\A)$ is equivalent to $\mD_{\le 0}(\A)\subs
\mD^{\le n}(\A)$. Thus the Definition~\ref{d6} is left-right symmetric. 

One important aspect of the dimension $\dime\A$ of $\A$ is that $\dime\A<\infty$ implies that the category of projectives $\PP:=\Proj(\A)$ is left coherent, see \cite[Proposition~15]{RumpFCMhdim}. We close this section with the following result where we characterise the finiteness of $\dime\A$ in terms of the $\A$-resolution dimension of $\smod(\PP)$ in the sense of Auslander-Buchweitz, see \cite[Remark~2]{RumpFCMhdim}.

\begin{prop}
\label{p16}
Let $\A$ be an Ext-category with $\PP:=\Proj(\A)$ left 
coherent and $\II:=\Inj(\A)$ right coherent. For any $n\in\mathbb{N}$, the 
following are equivalent$\colon$
\begin{enumerate}
\item $\dime\A\le n$. 

\item Every exact sequence $0\lxr M_0\lxr P_1\lxr\cdots\lxr P_n\lxr M_n\lxr 0$ in 
$\smod(\PP)$ with $P_i\in\PP$ satisfies $M_0\cong A_0$ for some $A_0\in\A$. 

\item Every exact sequence $0\lxr M_0\lxr A_1\lxr\cdots\lxr A_n\lxr M_n\lxr 0$ in $\smod(\PP)$ with $A_i\in\A$ satisfies $M_0\cong A_0$ for some $A_0\in\A$.
\end{enumerate} 
\begin{proof}
(i)$\Longrightarrow$(ii)$\colon$Consider a projective 
resolution $P_1\lxr\cdots\lxr P_n$ of $M_n$ in $\smod\PP$. This gives a complex $P$ over $\PP$ with $P_i=0$ for $i>n$ such that $P$ is left exact at $m<n$. Thus $P$ belongs to $\mD^{\le n}(\A)\cap \mD_{\ge n}(\A)$ and since $\dime\A\le n$, i.e.\ $\mD_{\ge n}(\A)\subs \mD^{\ge 0}(\A)$, it follows that $P$ lies in $\mD^{\le n}(\A)\cap \mD^{\ge 0}(\A)$. This means that $P_i=0$ for all $i>n$ and all $i<0$ in $\mD(\A)$. 
Since $P_i$ belongs to $\PP$ for all $i$, there is a triangle $P\lxr A\lxr Z\lxr P[1]$ in $\mathsf{K}(\A)$ with $A\in \mathsf{K}^{\ge 0}(\A)$ and $Z$ acyclic, see Lemma~\ref{Keller'slemma}. In particular, since $P_i=0$ for all $i<0$ in $\mD(\A)$ this means that there is a factorization $P_{-1}\tra B\rat P_0$ in $\A$. Using the embedding $\A\lxr \smod(\PP)$ (see Proposition~\ref{p13}), the latter factorisation shows that the cokernel $M_0$ of $P_{-1}\lxr P_0$ is isomorphic to an object in $\A$.  

(ii)$\Longrightarrow$(iii)$\colon$Let $0\lxr M_0\lxr A_1\lxr\cdots\lxr A_n\lxr M_n\lxr 0$ be an exact sequence in $\smod(\PP)$ with $A_i\in\A$. We show that $M_0\cong A_0$ for some $A_0\in\A$. Denote by $A$ the complex $A_1\lxr\cdots\lxr A_n$. Then, by Lemma~\ref{Keller'slemma} (i) there is a triangle $P\lxr A\lxr Z\lxr P[1]$ in $\mathsf{K}(\A)$ with $A\in \mathsf{K}^{\ge 0}(\A)$ and $Z$ acyclic. Note that $P_i$ has projective terms and by the proof of Lemma~\ref{Keller'slemma} there exists a quasi-isomorphism $\rho\colon P\lxr A$. Thus, we get an exact sequence $0\lxr M_0'\lxr P_1\lxr\cdots\lxr P_n\lxr M_n'\lxr 0$ in 
$\smod\PP$ with $P_i\in\PP$ and by (ii) there is an object $A_0$ in $\A$ such that $M_0'\cong A_0$. Since the complexes $P$ and $A$ are identified in $\mD(\A)$, the desired property holds for $M_0$ as well.

(iii)$\Longrightarrow$(i)$\colon$This implications follows immediately by definition.
\end{proof}
\end{prop}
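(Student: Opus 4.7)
The strategy is to close the cycle (i) $\Rightarrow$ (ii) $\Rightarrow$ (iii) $\Rightarrow$ (i). The workhorses throughout are the full embeddings $\A \hookrightarrow \smod(\PP) \hookrightarrow \mD(\A)$ from Proposition~\ref{p13}, which let one pass freely between exact sequences in $\smod(\PP)$ and complexes in $\mD(\A)$, together with Lemma~\ref{Keller'slemma} to replace complexes by projective resolutions.

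For (i) $\Rightarrow$ (ii), I would take the given sequence $0 \to M_0 \to P_1 \to \cdots \to P_n \to M_n \to 0$ and view $P := (P_1 \to \cdots \to P_n)$ as a complex of projectives placed in cohomological positions $1, \ldots, n$. Trivially $P \in \mD^{\le n}(\A)$, and exactness of the original sequence at each intermediate $P_m$ combined with the projectivity of the $P_m$ provides the liftings needed for $P \in \mD_{\ge n}(\A)$. The hypothesis $\mD_{\ge n}(\A) \subseteq \mD^{\ge 0}(\A)$ now places $P$ in $\mD^{\le n}(\A) \cap \mD^{\ge 0}(\A)$. Using Lemma~\ref{Keller'slemma} to fit $P$ into a triangle $P \to A \to Z \to P[1]$ in $\mk(\A)$ with $A$ non-negatively supported and $Z$ acyclic, the discrepancy between the original placement of $P$ and its non-negative replacement forces a factorisation in $\A$ which, reinterpreted through the embedding $\A \hookrightarrow \smod(\PP)$, identifies $M_0$ with an object of $\A$.

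For (ii) $\Rightarrow$ (iii), the plan is to reduce the general case to the projective one. Given $0 \to M_0 \to A_1 \to \cdots \to A_n \to M_n \to 0$ in $\smod(\PP)$ with $A_i \in \A$, I apply Lemma~\ref{Keller'slemma}(i) to the bounded-above complex $A := (A_1 \to \cdots \to A_n)$ to obtain a component-wise projective complex $P$ and a quasi-isomorphism $P \to A$ in $\mk(\A)$. Reading off the corresponding exact sequence $0 \to M_0' \to P_1 \to \cdots \to P_n \to M_n' \to 0$ in $\smod(\PP)$ and invoking (ii) yields $M_0' \cong A_0$ with $A_0 \in \A$; since $P$ and $A$ are identified in $\mD(\A)$ and $\smod(\PP) \hookrightarrow \mD(\A)$ is fully faithful, this transports to $M_0 \in \A$. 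The last implication (iii) $\Rightarrow$ (i) then follows by unwinding Definition~\ref{d6}: any $X \in \mD_{\ge n}(\A) \cap \mD^{\le n}(\A)$, after replacement by a complex whose components lie in $\A$, presents itself as an exact sequence of the type appearing in (iii), whose leftmost term is forced into $\A$, thereby placing $X$ inside $\mD^{\ge 0}(\A)$.

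The main obstacle I expect is the step (i) $\Rightarrow$ (ii), specifically the extraction of an honest object of $\A$ realising $M_0$ from the purely derived-categorical statement $P \in \mD^{\ge 0}(\A)$. Executing this requires both the component-wise projective form of Keller's triangle and the resolving character of $\A$ inside $\smod(\PP)$; the delicate point is to recognise that the object of $\A$ produced by the factorisation is actually $M_0$ itself rather than some unrelated cohomology appearing elsewhere in the complex.
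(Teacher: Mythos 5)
Your overall route is the same as the paper's (close the cycle (i)$\Rightarrow$(ii)$\Rightarrow$(iii)$\Rightarrow$(i), pass through $\mD(\A)$ via Proposition~\ref{p13}, use Lemma~\ref{Keller'slemma} to replace complexes by projective resolutions), and your handling of (ii)$\Rightarrow$(iii) and (iii)$\Rightarrow$(i) is essentially the paper's. However, your execution of (i)$\Rightarrow$(ii) has a genuine gap.

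You set $P:=(P_1\to\cdots\to P_n)$ \emph{truncated}, placed in degrees $1,\ldots,n$, and then claim $P\in\mD_{\ge n}(\A)$. This is false in general: left exactness of $P$ at degree $1$ (Definition~\ref{d3}) would require every morphism $Q\to P_1$ with $Q\in\PP$ landing in $\ker(P_1\to P_2)=M_0$ to factor through the zero map from $P_0=0$, i.e.\ to vanish --- which forces $M_0=0$. Moreover, since your $P$ already lives in degrees $\ge 1$, the conclusion $P\in\mD^{\ge 0}(\A)$ is trivially true and carries no information about $M_0$; there is in fact no ``discrepancy'' between the original placement and a non-negative replacement, so nothing forces a factorisation in $\A$. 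The step that you yourself flagged as the main obstacle is exactly where the argument collapses.

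The fix, as in the paper's proof, is to first \emph{extend} the given data to the left: choose a projective resolution $\cdots\to P_{-1}\to P_0\to M_0\to 0$ in $\smod(\PP)$ and splice it with $P_1\to\cdots\to P_n$ through $M_0$, producing an unbounded-below complex $P$ of projectives concentrated in degrees $\le n$. Exactness of this $P$ in $\smod(\PP)$ at every degree $<n$, together with projectivity of the $P_i$, gives $P\in\mD_{\ge n}(\A)$, and now $P$ has non-trivial negative degrees. The hypothesis $\mD_{\ge n}(\A)\subseteq\mD^{\ge 0}(\A)$ then says something non-trivial: the negative-degree tail of $P$ is acyclic over $\A$, yielding a factorisation $P_{-1}\twoheadrightarrow B\rightarrowtail P_0$ in $\A$, and via the resolving embedding $\A\hookrightarrow\smod(\PP)$ this identifies $M_0=\Coker(P_{-1}\to P_0)$ with an object of $\A$. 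Without the leftward extension, this mechanism has nothing to act on.
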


\begin{rem}
\label{remfindim}
We remark that $\dime\A=0$ if and only if $\A\simeq \smod\PP$, that is, $\A$ 
is abelian. Also, $\dim\A\le 1$ if and only if $\A$ is left semi-abelian \cite{qa}. Under the assumptions of Proposition~\ref{p16}, this is equivalent to $\A$ being quasi-abelian (\cite{Sch}; cf. \cite{Fio}, Section~1). Note that $\dime\A\le 2$ means that $\A$ has kernels (and cokernels, by symmetry).  For a Cohen-Macaulay order $\Lambda$ over a complete {$d$-dimensional} regular local ring, Proposition~\ref{p16} implies that $\dime \CM(\Lambda)\leq d<\infty$. 
\end{rem}

\section{Totally acyclic exact categories}
\label{section:totallyacyclic}

In this section we clarify the relationship between acyclicity with exactness.
Let $\A$ be an Ext-category with $\PP:=\Proj(\A)$ left coherent. As before, we denote by $\II$ the subcategory $\Inj(\A)$ of injective objects of $\A$. We consider two full subcategories of the abelian category $\MM:=\smod(\PP)$. The notation $\MM$ is used when we write $\Hom_{\MM}(M,N)$ or $\Ext^i_{\MM}(M,N)$ for two objects $M, N\in \smod(\PP)$. Define
an {\bf $\II$-resolution} of $M\in\MM$ to be an exact sequence 
\begin{equation}
\label{17}
\begin{tikzcd}
0 \arrow{r}  & M \arrow{r} & I_0 \arrow{r} & I_1 \arrow{r} & I_2 \arrow{r} & \cdots
\end{tikzcd}
\end{equation}
in $\MM$ with $I_n\in\II$ for all $n\in\mathbb{N}$ such that the sequence
\begin{equation}
\label{hominjexact}
\begin{tikzcd}
\cdots \arrow{r}  &  \Hom_{\MM}(I_1,I) \arrow{r} &  \Hom_{\MM}(I_0,I) \arrow{r} &  \Hom_{\MM}(M,I) \arrow{r} & 0 
\end{tikzcd}
\end{equation}
is exact for all $I$ in $\II$. We write $\mathsf{T}^-(\A)$ for the full subcategory of all $M\in\MM$ which admit an $\II$-resolution. By $\mt_-(\A)$ we denote the full subcategory of all $M\in\MM$ with $\Ext_{\MM}^j(M,I)=0$ for all $I\in\Inj(\A)$ and $j>0$. 

The intersection 
\begin{equation}
\label{18}
\mt(\A):=\mt^-(\A)\cap \mt_-(\A)  
\end{equation}  
will be called the {\bf acyclic closure} of $\A$.

\begin{prop}
\label{p17}
Let $\A$ be an Ext-category with $\PP$ left coherent. Then $\mt(\A)$ is the largest resolving, hence exact, subcategory $\C$ of $\smod(\PP)$ with
$\Proj(\C)=\add{\PP}$ and $\Inj(\C)=\add{\II}$. In particular, $\mt(\A)$ is an Ext-category.
\begin{proof}
Let $\C$ be a resolving subcategory of $\smod(\PP)$ which satisfies 
$\Proj(\C)=\add{\PP}$ and $\Inj(\C)=\add{\II}$, and let $M$ be an
object in $\C$. We show that $M$ lies in $\mt(\A)$.
Consider a conflation in $\C$ 
\[
\begin{tikzcd}
M'  \arrow[r, rightarrowtail, "i"] & P \arrow[r, twoheadrightarrow] & M  
\end{tikzcd}
\]
with $P$ in $\PP$. Since $\Inj(\C)=\add{\II}$, every morphism $M'\lxr I$ with $I$ in $\II$ factors through $i$. From the long exact sequence
\[
\begin{tikzcd}
\Hom_{\MM}(M,I) \arrow[r, rightarrowtail] & \Hom_{\MM}(P,I) \arrow[r, twoheadrightarrow] & \Hom_{\MM}(M',I)  \arrow[r] & \Ext^1_{\MM}(M,I) \arrow[r] & 0  
\end{tikzcd}
\]
it follows that $\Ext^1_{\M}(M,I)=0$. By dimension shift we infer that $M\in\mt_-(\A)$. 
Consider now a sequence of conflations in $\C\colon$ 
\[
\begin{tikzcd}
M  \arrow[r, rightarrowtail] & I_0 \arrow[r, twoheadrightarrow] & \Sigma(M)  
\end{tikzcd},  \begin{tikzcd}
\Sigma(M)  \arrow[r, rightarrowtail] & I_1 \arrow[r, twoheadrightarrow] & \Sigma^2(M)  
\end{tikzcd}, \ \ldots
\]
with $I_i$ in $\II$. Let us explain how we get these conflations. If you think $M$ as a two term complex $P_1\lxr P_2$, there is an inflation $P_1\to I_1$ in $\A$ with $I_1\in \II$. Then take the pushout diagram and consider again an inflation of the pushout as before. Recall that by the definition of an exact category, the pushout of an inflation along an arbitrary morphism exists and yields an inflation. Then from the embedding $\A\hra\smod(\PP)$ (see Proposition~\ref{p13}) we get the desired conflations in $\C$ since it is a resolving subcategory of $\smod(\PP)$ with $\Inj(\C)=\add{\II}$. From the exact structure of $\C$, we obtain an exact sequence of the form $(\ref{17})$ in $\smod(\PP)$. For any injective object $I$ in $\A$, applying the functor $\Hom_{\MM}(-,I)$ to $(\ref{17})$ we get 
the exact sequence $(\ref{hominjexact})$ since there is an isomorphism $\Ext^1_{\MM}(\Sigma^i(M),I)\cong \Ext^1_{\C}(\Sigma^i(M),I)=0$. Thus, $M$ belongs to $\mt^{-}(\A)$ and therefore $M$ lies in $\mt(\A)$.

By Proposition~\ref{p13}, it remains to show that 
$\mt(\A)$ is resolving. Clearly, the projectives $\PP$ lie in $\mt(\A)$. Consider a short exact sequence in $\smod(\PP)\colon$
\[
\begin{tikzcd}
0\arrow[r] & L \arrow[r, "a"] & M  \arrow[r, "b"] & N \arrow[r] & 0  
\end{tikzcd}
\]
with $L$ and $N$ in $\mt(\A)$. Then $M$ belongs to $\mt_-(\A)$.
Furthermore, there are $\II$-resolutions 
\[
\begin{tikzcd}
0\arrow[r] & L \arrow[r, "i"] & I_0  \arrow[r] & \cdots   
\end{tikzcd} \ \ \text{and} \ \ \begin{tikzcd}
0\arrow[r] & N \arrow[r, "j"] & J_0  \arrow[r] & \cdots   
\end{tikzcd}
\]
Since $N\in\mt_-(\A)$, we have $i=ha$ for some $h\colon M\lxr I_0$. Thus $\binom{h}{jb}\colon M\lxr I_0\oplus J_0$ is monic, and it is easily checked that every morphism $M\lxr I$ with $I\in\II$ factors through 
$\binom{h}{jb}$. Applying the $3\times 3$ lemma and induction, we get
$M\in\mt^-(\A)$. 

Next assume that $M$ and $N$ lie in $\mt(\A)$. Then clearly $L$ is in $\mt_-(\A)$. Furthermore, there is a conflation 
$M\rat I_0\tra M'$ in $\mt(\A)$ with $I_0\in\II$. This gives a commutative diagram
\[
\begin{tikzcd}
L \arrow[r, rightarrowtail, "a"] \arrow[d, equal] & M \arrow[r, twoheadrightarrow, "b"] \arrow[d, rightarrowtail, "i"] & N \arrow[d, rightarrowtail] \\
L \arrow[r, rightarrowtail, "ia"] & I_0 \arrow[d, twoheadrightarrow] \arrow[r, twoheadrightarrow] & L' \arrow[d, twoheadrightarrow] \\
 & M' \arrow[r, equal] & M'
\end{tikzcd}
\]
where every morphism $L\lxr I$ with $I\in\II$ factors through $a$, hence also through $ia$. Since $\mt(\A)$ is closed under extensions in $\smod(\PP)$ and $N$, $M'$ lie in $\mt(\A)$, we have $L'\in\mt(\A)$. 
Then the middle conflation in the diagram above is the start of the desired $\II$-resolution for $L$. We infer that $L$ belongs to $\mt(\A)$. 
\end{proof}
\end{prop}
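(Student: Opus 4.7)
The plan is to verify two things separately: (a) every resolving subcategory $\C\subseteq \smod(\PP)$ satisfying $\Proj(\C)=\add\PP$ and $\Inj(\C)=\add\II$ is contained in $\mt(\A)$, and (b) $\mt(\A)$ itself is such a subcategory. Combined, these give that $\mt(\A)$ is the largest such subcategory, and the ``exact, hence Ext-category'' conclusion follows from Proposition~\ref{p13} once the projective/injective generators are identified.

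For (a), fix $M\in\C$ and check $M\in\mt_-(\A)$ and $M\in\mt^-(\A)$ in turn. The membership $M\in\mt_-(\A)$ comes from dimension shifting: since $\PP\subseteq\C$ gives enough projectives and $\C$ is resolving, I form a conflation $K\rightarrowtail P\twoheadrightarrow M$ in $\C$ with $P\in\PP$ and $K\in\C$; for any $I\in\II=\Inj(\C)$, the injectivity of $I$ in $\C$ forces every $K\to I$ to extend along $K\to P$, so the long exact sequence obtained from $\Hom_\MM(-,I)$ has zero connecting map, giving $\Ext^1_\MM(M,I)=0$, and iteration yields $\Ext^j_\MM(M,I)=0$ for all $j\geq 1$. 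For $M\in\mt^-(\A)$, I build an $\II$-resolution inductively; the crucial step is producing an inflation $M\rightarrowtail I_0$ in $\smod(\PP)$ with $I_0\in\II$, which I do by taking a projective presentation $P_1\to P_2$ of $M$, choosing an inflation $P_1\rightarrowtail I_0$ in $\A$ (possible since $P_1\in\PP\subseteq\A$ and $\A$ has enough injectives), and pushing out in $\A$. A short diagram chase turns this into a conflation $M\rightarrowtail X\twoheadrightarrow \Sigma M$ involving an object of $\add\II$, with $\Sigma M\in\C$ by resolvingness; iterating and using the Ext-vanishing already established gives the required $\Hom_\MM(-,I)$-exactness.

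For (b), the inclusions $\PP\subseteq\mt(\A)$ and $\II\subseteq\mt(\A)$ are immediate from the definitions, and the identifications $\Proj(\mt(\A))=\add\PP$ and $\Inj(\mt(\A))=\add\II$ follow once resolvingness is known, by splitting a deflation from $\PP$ onto a projective, respectively the first inflation into $I_0\in\II$ of the $\II$-resolution of an injective. The main work is resolvingness of $\mt(\A)$: given $0\to L\to M\to N\to 0$ in $\smod(\PP)$, I need $L,N\in\mt(\A)\Rightarrow M\in\mt(\A)$ and $M,N\in\mt(\A)\Rightarrow L\in\mt(\A)$. The $\mt_-(\A)$ halves follow directly from the long exact Ext-sequence. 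For the $\mt^-(\A)$ halves I use horseshoe-style constructions: when $L,N$ have $\II$-resolutions, build one for $M$ by assembling them termwise, using $\Ext^1_\MM(N,I)=0$ to lift the splittings; when $M,N$ have $\II$-resolutions and I need one for $L$, take the first step $M\rightarrowtail I_0\twoheadrightarrow M'$ of $M$'s resolution, form a $3\times 3$-diagram with $L\rightarrowtail M\twoheadrightarrow N$, and obtain a conflation $L\rightarrowtail I_0\twoheadrightarrow L'$ where $L'$ fits in a short exact sequence $N\rightarrowtail L'\twoheadrightarrow M'$ and hence lies in $\mt(\A)$ by the extension closure already proved. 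The anticipated main obstacle is marshalling these $3\times 3$-lemma and pushout manipulations cleanly so that everything stays in $\mt(\A)$ at each stage; the rest is routine dimension-shifting.
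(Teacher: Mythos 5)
Your proposal follows essentially the same route as the paper: dimension shifting against $\II$ to get $M\in\mt_-(\A)$; building the $\II$-resolution by taking a projective presentation $P_1\to P_2$ of $M$, choosing an inflation $P_1\rightarrowtail I_0$ in $\A$, and pushing out (iterated); and then establishing resolvingness of $\mt(\A)$ via a horseshoe-type assembly for extension closure and a $3\times 3$ diagram for closure under kernels of deflations. This matches the paper's decomposition of the argument step for step, including the way extension closure is used to conclude $L'\in\mt(\A)$ in the kernel-closure step.
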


As a consequence, we get an abstract description of $\mt(\A)$, 
removing the asymmetry caused by the embedding into $\smod(\PP)$. We say that an additive category $\A$ is a {\em variety} ({\em of annuli}) \cite{AR} if idempotents split in $\A$. By $\add\A:=\Proj(\smod(\A))\simeq \Inj(\com(\A))$ we denote the variety generated by $\A$. By ``largest'' we mean that it contains any other Ext-category which contains $\A$ as a full subcategory and has the prescribed projectives and injectives.

\begin{cor}
\label{corTTA} 
Let $\A$ be an Ext-category with $\PP:=\Proj(\A)$ left coherent and $\II:=\Inj(\A)$. Up to equivalence, $\mt(\A)$ is the largest Ext-category $\C$ containing $\A$ as a full subcategory such that $\Proj(\C)=\add\PP$
and $\Inj(\C)=\add\II$. In particular, 
$\mt(\mt(\A))=\mt(\A)$.  
\begin{proof}
By Proposition~\ref{p13}, any such category $\C$ admits a resolving full embedding 
$\C\hra\smod(\PP)$ such that $\C$ carries the induced exact structure from 
$\smod(\PP)$. Hence $\A\hra\C\hra\mt(\A)\hra\smod(\PP)$ with the 
exact structures induced from $\smod(\PP)$. 
\end{proof}
\end{cor}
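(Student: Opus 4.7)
The plan is to deduce the corollary directly from Propositions~\ref{p13} and~\ref{p17}. Proposition~\ref{p17} already establishes that $\mt(\A)$, viewed inside $\smod(\PP)$, is the largest \emph{resolving} subcategory of $\smod(\PP)$ whose projectives and injectives are $\add\PP$ and $\add\II$ respectively. So the only thing left to do is to realise any competing Ext-category $\C$ (containing $\A$, with $\Proj(\C)=\add\PP$ and $\Inj(\C)=\add\II$) as such a resolving subcategory of $\smod(\PP)$ in a way that is compatible with the embedding of $\A$ already fixed.

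First I would note that $\add\PP$ is left coherent because $\PP$ is, and $\smod(\add\PP)\simeq\smod(\PP)$. Applying Proposition~\ref{p13} to $\C$ then yields a fully faithful additive embedding $J_{\C}\colon\C\hra\smod(\PP)$ which realises $\C$ as a resolving subcategory and whose induced exact structure agrees with the one on $\C$. Since the construction of $J_{\C}$ in Proposition~\ref{p13} uses projective presentations drawn from $\Proj(\C)=\add\PP$, its restriction to $\A\subs\C$ agrees (up to natural isomorphism) with the canonical embedding $J_{\A}\colon\A\hra\smod(\PP)$; in particular $J_{\C}(\add\II)=\add\II$ as subcategories of $\smod(\PP)$. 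At this point Proposition~\ref{p17} applies and delivers $\C\hra\mt(\A)$, which is the asserted maximality.

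For the final assertion $\mt(\mt(\A))=\mt(\A)$, I would invoke Proposition~\ref{p17} to note that $\mt(\A)$ is itself an Ext-category containing $\A$ with $\Proj(\mt(\A))=\add\PP$ and $\Inj(\mt(\A))=\add\II$. Applying the maximality clause just proved to $\A$ (taking $\C:=\mt(\mt(\A))$) gives $\mt(\mt(\A))\subs\mt(\A)$, while the reverse inclusion $\mt(\A)\subs\mt(\mt(\A))$ is immediate from the definition of the acyclic closure. The main obstacle I anticipate is the compatibility check $J_{\C}|_{\A}\simeq J_{\A}$ together with the matching of the injective subcategories inside $\smod(\PP)$; this is a bookkeeping argument relying on the functoriality of the construction in Proposition~\ref{p13} and on the equality $\Proj(\A)=\Proj(\C)=\add\PP$, but nothing deeper is needed.
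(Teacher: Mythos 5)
Your proposal is correct and takes essentially the same route as the paper: apply Proposition~\ref{p13} to $\C$ (using $\smod(\add\PP)\simeq\smod(\PP)$) to realise $\C$ as a resolving subcategory of $\smod(\PP)$ with the induced exact structure, then invoke the maximality clause of Proposition~\ref{p17} to obtain $\A\hra\C\hra\mt(\A)$. The compatibility check $J_{\C}|_{\A}\simeq J_{\A}$ and the two-inclusion argument for idempotency that you spell out are details the paper leaves implicit, but your treatment of them is sound.
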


We remark that the left coherence of $\PP$ can be dropped. To see this, one has to show that the {\em left abelian} \cite{T} category $\smod(\PP)$ still has a natural exact structure. 

The following result shows that the acyclic closure $\mathsf{T}(\A)$ of $\A$ consists of the Gorenstein-projectives in $\smod(\PP)$ (see subsection~\ref{subsectiongorprojectives}).

\begin{prop}
\label{p18}
Let $\A$ be an Ext-category with $\PP:=\Proj\A$ left coherent. 
An object $M\in\smod(\PP)$ belongs to $\mt(\A)$ if and only if 
there is an exact complex $(\ref{2})$ over $\A$ where $M$ is the image of 
$a_0$ in $\smod(\PP)$.
\begin{proof}
Assume first that $M$ lies in $\mt(\A)$. Since $\PP$ is left coherent, i.e.\ $\smod(\PP)$ is abelian, there exists a projective resolution $\cdots\lxr P_1\lxr P_0\lxr M\lxr 0$ of $M$ in $\smod(\PP)$. Combined with an $\II$-resolution (\ref{17}), this cleary gives a complex (\ref{2}) over $\A$ where $M$ is the image of $a_0$ in $\smod(\PP)$. Let us explain now why the latter complex is exact in the sense of Definition~\ref{d3}. We have constructed the following exact sequence in $\smod(\PP)\colon$
\begin{equation}
\label{resolexactcomplex}
\begin{tikzcd}
\cdots\arrow[r] & P_1 \arrow[r] & P_0 \arrow[rd, twoheadrightarrow]  \arrow[rr] && I_0 \arrow[r] & I_1\arrow[r] & \cdots   \\
 &  & &  M \arrow[ur, rightarrowtail, "i"'] &    
\end{tikzcd}
\end{equation}
First, the preimage of the above complex in $\A$ is left exact. This follows easily by the Yoneda embedding using also that the above sequence is exact in $\smod(\PP)$. We now show that the above complex is right exact in $\A$. Let $f\colon I_0\lxr I$ be a morphism in $\A$ such that $f\circ a_0=0$. Note that $a_0$ denotes the map $P_0\lxr I_0$ in $\A$. Passing now to $\smod(\PP)$ the latter composition being zero, implies that $f\circ i=0$. Since the right hand side of $(\ref{resolexactcomplex})$ is an $\II$-resolution, it follows that we have the exact sequence $(\ref{hominjexact})$. Thus, there is a morphism $g\colon I_1\lxr I$ such that $g\circ a_1=f$. Taking the preimage of this in $\A$, we infer that $(\ref{resolexactcomplex})$ is left exact at $n=1$. The same argument shows that $(\ref{resolexactcomplex})$ is left exact at $n=0$ and  similarly we show that $(\ref{resolexactcomplex})$ is left exact for all $n\geq 0$. Finally, the desired factorisation property for all $n\leq -1$, i.e. $(\ref{resolexactcomplex})$ is left exact for all $n\leq -1$, 
follows easily since $\Ext_{\MM}^j(M,I)=0$ for all $I$ in $\Inj(\A)$ and $j>0$. 

Conversely, let (\ref{2}) be an exact complex over $\A$ and let $M_n$ be the image of $a_n$ in $\MM:=\smod(\PP)$. By Proposition~\ref{p13}, the category $\A$ is a full resolving subcategory of $\smod(\PP)$. This implies that $(*)\colon\Ext^i_\A(A,A')\cong \Ext_{\MM}^i(A,A')$ for all $i\geq 0$ and all objects $A, A'$ in $\A$. Consider the following exact sequence in $\smod(\PP)$:
\begin{equation}
\label{exactinmodP}
\begin{tikzcd}
\cdots\arrow[r] & A_{-1} \arrow[d, twoheadrightarrow]  \arrow[r] & A_0 \arrow[rd, twoheadrightarrow]  \arrow[rr] && A_1 \arrow[r] & A_2\arrow[r] & \cdots   
\\
 &  M_{-1} \arrow[ur, rightarrowtail] &  &  M_0=:M \arrow[ur, rightarrowtail] &    
\end{tikzcd}
\end{equation}
Let $I$ be an object in $\II$. Then we have the following long exact sequence:
\[
\begin{tikzcd}
0 \rar &  \Hom_{\MM}(M,I) \rar & \Hom_{\MM}(A_0,I) \rar & \Hom_{\MM}(M_{-1},I) \ar[out=-30, in=150]{dll} \\
 & \Ext^1_{\MM}(M,I) \rar & \Ext^1_{\MM}(A_0,I) \rar & \Ext^1_{\MM}(M_{-1},I) \ar[out=-30, in=150]{dll} \\
 & \Ext^2_{\MM}(M,I) \rar & \Ext^2_{\MM}(A_0,I) \rar & \cdots 
\end{tikzcd}
\]
Since $(\ref{exactinmodP})$ is right exact, the map $\Hom_{\MM}(A_0,I)\lxr \Hom_{\MM}(M_{-1},I)$ is surjective. Moreover, by the isomorphism $(*)$ above we get that $\Ext^2_{\MM}(A_0,I)=0=\Ext^1_{\MM}(A_0,I)$. We infer that $\Ext^j_{\MM}(M,I)=0$ for all $j>0$, i.e.\ $M$ lies in $\mt_{-}(\A)$. By the proof of the dual of Lemma~\ref{Keller'slemma}, there exists a quasi-isomorphism $\rho\colon A\lxr I$ with $\rho_n=1_{A_n}$ for $n\le 0$ and $I_n\in\II$ for $n>0$. Using the full embeddings 
$\A \lxr \smod(\PP) \lxr \mD(\A)$ by Proposition~\ref{p13} and since $\A$ is right exact, we get that $0\lxr M\lxr I_1\lxr I_2\lxr\cdots$ is an $\II$-resolution. Hence, $M$ lies in $\mt^-(\A)$. 
\end{proof}
\end{prop}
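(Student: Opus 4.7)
The plan is to establish both implications by constructing a complete resolution out of the two halves provided by the hypotheses, and conversely extracting the Ext-vanishing and $\II$-resolution from a given exact complex.

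For the forward direction, suppose $M\in\mt(\A)=\mt^-(\A)\cap\mt_-(\A)$. Since $\PP$ is left coherent, $\smod(\PP)$ is abelian and $M$ admits a projective resolution $\cdots\lxr P_1\lxr P_0\lxr M\lxr 0$ with $P_i\in\PP$. By $M\in\mt^-(\A)$, there is an $\II$-resolution $0\lxr M\lxr I_0\lxr I_1\lxr\cdots$. Splicing across the factorisation $P_0\tra M\rat I_0$ produces a complex of the form (\ref{2}) over $\A$ whose image at $a_0$ is $M$ in $\smod(\PP)$. What remains is to verify that this complex is exact in the sense of Definition~\ref{d3}.

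I would check the four pieces separately. Left exactness at $n\le 0$ is immediate from exactness of the projective resolution together with $M\hookrightarrow I_0$ being a monomorphism. Right exactness at $n\ge 1$ is precisely the exactness of $\Hom_\MM(-,I)$ applied to the $\II$-resolution (using that $P_0\tra M$ is epi to collapse a map $A_1\to I$ killing $a_0$ down to a map on $M$). Left exactness at $n\ge 1$ uses that $P$ is projective in $\smod(\PP)$ so any map $P\to I_n$ whose composition to $I_{n+1}$ vanishes lifts through the image of $I_{n-1}\to I_n$. The delicate step, which I expect to be the main obstacle, is right exactness at $n\le 0$ (including the junction): a map $g\colon P_k\to I$ with $g\circ a_{k-1}=0$ descends to a map $\Omega^k M\to I$ on the $k$-th syzygy, and lifting it back to $P_{k-1}$ requires exactly $\Ext^{k+1}_\MM(M,I)=0$. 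This is where $M\in\mt_-(\A)$ enters.

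For the converse, given an exact complex (\ref{2}) over $\A$ with $M$ the image of $a_0$ in $\MM:=\smod(\PP)$, I would first use Proposition~\ref{p13}: since $\A$ is a resolving subcategory of $\MM$, we have $\Ext^i_\A(A,A')\cong\Ext^i_\MM(A,A')$ for $A,A'\in\A$. Chopping the complex into its syzygy short exact sequences $0\lxr M_n\lxr A_{n+1}\lxr M_{n+1}\lxr 0$ in $\MM$ with $M=M_0$, the long exact sequences of $\Ext_\MM(-,I)$ for $I\in\II$, combined with right exactness at each $n\le 0$ (which gives surjectivity of $\Hom_\A(A_n,I)\to\Hom_\MM(M_{n-1},I)$) and the vanishing $\Ext^i_\MM(A,I)=0$ for $A\in\A$, $i>0$, yield $\Ext^j_\MM(M,I)=0$ for all $j>0$. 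Hence $M\in\mt_-(\A)$.

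To finish, I would replace the right half of the complex by an $\II$-resolution using the dual of Lemma~\ref{Keller'slemma}: there is a quasi-isomorphism $\rho\colon A\lxr I$ with $\rho_n=\iden_{A_n}$ for $n\le 0$ and $I_n\in\II$ for $n>0$. The composite $0\lxr M\lxr I_1\lxr I_2\lxr\cdots$ is an exact sequence in $\MM$, and the right exactness of the original complex at indices $n\ge 1$ transfers through $\rho$ to exactness of $\Hom_\MM(-,J)$ on this truncation for every $J\in\II$, which is the condition (\ref{hominjexact}) defining an $\II$-resolution. Thus $M\in\mt^-(\A)$, so $M\in\mt(\A)$ and the proof is complete.
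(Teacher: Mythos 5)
Your proof follows essentially the same route as the paper's in both directions: splice a projective resolution with an $\II$-resolution and verify exactness via the $\II$-resolution property plus the vanishing $\Ext^j_\MM(M,I)=0$ for the forward direction, then chop an exact complex into syzygies, run the long exact $\Ext_\MM(-,I)$ sequences using the resolving embedding $\A\hra\smod(\PP)$, and replace the right half by a Keller-type quasi-isomorphism to produce the $\II$-resolution for the converse. One small indexing slip: right exactness at $n=-k$ for $k\geq 1$ is obstructed by $\Ext^{k}_\MM(M,I)$ (not $\Ext^{k+1}_\MM(M,I)$), while at $n=0$ the exactness of (\ref{hominjexact}) alone does the job; this does not affect the argument, since $M\in\mt_-(\A)$ gives all the required vanishings.
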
 

\begin{exam}
\label{examgorproj}
Let $R$ be a ring, and let $\Add(R)$ (respectively, $\add(R)$) be the category of (finitely generated) projective $R$-modules with the trivial exact structure. By Proposition~\ref{p18}, an $R$-module is Gorenstein-projective (see subsection~\ref{subsectiongorprojectives}) if and only if it belongs to the full subcategory $\mt(\Add(R))$ of $\Mod(R)$.  
\end{exam}

As an immediate consequence of Propositions~\ref{p17} and \ref{p18}, we have

\begin{cor}
\label{corexactacyclic}
Let $\A$ be an Ext-category with $\Proj(\A)$ left coherent. A complex over $\A$ is exact if and only if it is acyclic over $\mt(\A)$. 
\end{cor}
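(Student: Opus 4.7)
The argument splits into two implications, with Propositions~\ref{p17} and~\ref{p18} providing the essential bridge between the two notions of exactness.

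For the forward direction (an exact complex over $\A$ is acyclic over $\mt(\A)$), the plan is to apply Proposition~\ref{p18} at each position in order to detect the ``kernel objects''. Letting $Z_n$ denote the image of $a_n$ in $\smod(\PP)$, I would shift the complex so that $a_n$ becomes $a_0$ and invoke Proposition~\ref{p18} to conclude that each $Z_n$ lies in $\mt(\A)$. The next observation is that left exactness in the sense of Definition~\ref{d3} (every morphism from a projective of $\A$ vanishing under $a_n$ factors through $a_{n-1}$) is precisely the projective-test for exactness at $A_n$ in the abelian category $\smod(\PP)$, since the projectives of $\A$ coincide with those of $\smod(\PP)$ (up to $\add$). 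Hence the complex is exact in $\smod(\PP)$ in the usual abelian sense, yielding short exact sequences $0\lxr Z_{n-1}\lxr A_n\lxr Z_n\lxr 0$ with all terms in $\mt(\A)$. Since $\mt(\A)$ carries the exact structure induced from $\smod(\PP)$ as a resolving subcategory (Proposition~\ref{p17}), these are conflations in $\mt(\A)$, and the factorisations $a_n = i_n p_n$ through $Z_n$ are exactly the data certifying acyclicity.

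For the converse (acyclic over $\mt(\A)$ implies exact over $\A$), the key is the pair of identifications $\Proj(\mt(\A))=\add\PP$ and $\Inj(\mt(\A))=\add\II$ from Proposition~\ref{p17}: projective (resp.\ injective) objects of $\A$ remain projective (resp.\ injective) in $\mt(\A)$. Given conflations $Z_{n-1}\stackrel{i_{n-1}}{\rat}A_n\stackrel{p_n}{\tra}Z_n$ in $\mt(\A)$ with $a_n=i_np_n$, left exactness at $n$ follows by standard diagram-chasing: for $f\colon P\lxr A_n$ with $P\in\PP$ and $a_nf=0$, monicity of $i_n$ forces $p_nf=0$, so $f=i_{n-1}f'$ through the kernel; then projectivity of $P$ in $\mt(\A)$ lifts $f'$ along the deflation $p_{n-1}$ to produce the required factorisation through $a_{n-1}$. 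Right exactness is dual, using injectivity of $I\in\II$ in $\mt(\A)$ to extend along $i_n$.

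The main obstacle is conceptual rather than computational: one must recognise that the two candidate definitions of ``exactness'' — the projective/injective factorisation condition over $\A$ of Definition~\ref{d3}, and bona fide acyclicity over a suitable ambient exact category — become equivalent precisely when $\A$ is enlarged to an exact category whose projectives and injectives coincide with those of $\A$. The construction of $\mt(\A)$ was engineered in Proposition~\ref{p17} to do exactly this, and Proposition~\ref{p18} supplies the characterisation of $\mt(\A)$-objects as images in exact $\A$-complexes. Once these two facts are in hand, both implications reduce to the routine lifting/extension arguments sketched above.
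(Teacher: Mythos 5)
Your proof is correct and is essentially the intended argument; the paper states the corollary as an immediate consequence of Propositions~\ref{p17} and~\ref{p18} without spelling out the verification, which is precisely what you have supplied. Both directions are sound: in the forward direction, you correctly observe that left exactness over $\A$ translates (via the Yoneda embedding and the fact that $\Proj(\smod(\PP))=\add\PP$) into exactness of the complex in the abelian category $\smod(\PP)$, that the shifted applications of Proposition~\ref{p18} place each image $Z_n$ in $\mt(\A)$, and that resolvingness of $\mt(\A)$ then makes the resulting short exact sequences into conflations; in the converse direction, the lifting argument using $\Proj(\mt(\A))=\add\PP$ and the dual extension argument using $\Inj(\mt(\A))=\add\II$ recover left and right exactness in the sense of Definition~\ref{d3}.
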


If $\II$ is right coherent, we can analogously define a pair of 
full subcategories $\mt^+(\A)$ and $\mt_+(\A)$ of 
$\com\II$ and consider their intersection $\mt'(\A)$ in 
$\com\II$. In $\mD(\A)$, the subcategories 
$\mt(\A)$ and $\mt'(\A)$ are equivalent (not necessarily
equal):

\begin{cor} 
Let $\A$ be an Ext-category with $\PP$ left coherent and $\II$ right 
coherent. Then $\mt(\A)$ consists of the objects $A\in\smod(\PP)$ for which the complex $\tau_{<0} A$ is exact. Furthermore, $\mt(\A)\simeq \mt'(\A)$.    
\end{cor}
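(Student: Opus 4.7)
The plan is to identify the truncation triangle $\tau_{<0}A\to A\to\tau^{\ge 0}A$ of~(\ref{15}) with the triangle arising from the universal morphism $\lambda_A\colon A\to C_A$ into $\com(\II)$ furnished by Proposition~\ref{p15}, and then to recognise the resulting complex as precisely the exact complex appearing in Proposition~\ref{p18}. Setting $\tau^{\ge 0}A:=C_A\in\com(\II)\subseteq\mD^{\ge 0}(\A)$ and $\tau_{<0}A:=\mathrm{cone}(\lambda_A)[-1]$, one first verifies that $\tau_{<0}A\in\mD_{<0}(\A)$, using the $\Hom(-,I)$-exactness condition built into the definition of $\com(\II)$. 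Fix a projective resolution $P\colon\cdots\to P_{-1}\to P_0\to 0$ of $A$ and a representative $0\to I_0\to I_1\to\cdots$ of $C_A$. The chain map $P\to C_A$ realising $\lambda_A$ has only one nontrivial component $e\colon P_0\to I_0$ (the map from the proof of Proposition~\ref{p15}), so its mapping cone is, up to a shift, the spliced complex
\[
E_A\colon\ \cdots\longrightarrow P_{-1}\longrightarrow P_0\xrightarrow{e} I_0\longrightarrow I_1\longrightarrow\cdots,
\]
and the image of $e$ in $\smod(\PP)$ is precisely $A$ by construction of $\lambda_A$.

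With this identification in hand, the first assertion reduces to Proposition~\ref{p18}. Exactness in the sense of Definition~\ref{d3} is invariant under isomorphism in $\mD(\A)$, since by Proposition~\ref{p12} and its dual each local exactness condition is characterised by $\Hom$-vanishing in $\mD(\A)$. Hence exactness of $\tau_{<0}A$ is equivalent to exactness of the representative $E_A$. Since $A$ is the image of the middle map of $E_A$, Proposition~\ref{p18} yields that $E_A$ is exact iff $A\in\mt(\A)$, and both directions of the characterisation follow.

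The equivalence $\mt(\A)\simeq\mt'(\A)$ is then established by the dual construction. The dual of Proposition~\ref{p18} characterises $\mt'(\A)\subseteq\com(\II)$ by the analogous exact-complex condition viewed from the $\II$-side, so both $\mt(\A)$ and $\mt'(\A)$ are described via the existence of the same spliced complex $E_A$. The adjoint pair $(\smod(\PP),\com(\II))$ of Proposition~\ref{p15} packages this correspondence as a genuine equivalence of categories: the reflection $A\mapsto C_A$ sends $\mt(\A)$ into $\mt'(\A)$, and the coreflection provides the inverse, the unit and counit of the adjunction becoming isomorphisms in $\mD(\A)$ on these matched subcategories.

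The main obstacle is carrying out the identification $\tau_{<0}A\cong E_A$ in $\mD(\A)$ (up to shift) rigorously: one must check that the chain map $P\to C_A$ chosen above genuinely realises $\lambda_A$, that its mapping cone splices the two resolutions as claimed after the shift, and that the resulting object indeed lies in $\mD_{<0}(\A)$ --- a point which relies crucially on the right-exactness encoded in $\com(\II)$. Once this structural step is secured, the two assertions become direct consequences of Proposition~\ref{p18} together with its dual and the adjoint-pair machinery of Proposition~\ref{p15}.
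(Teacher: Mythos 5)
Your proposal takes essentially the same route as the paper: represent $\tau_{<0}A$ by the spliced complex $E_A\colon \cdots\to P_{-1}\to P_0\to I_0\to I_1\to\cdots$ obtained from the cone of $\lambda_A\colon A\to C_A$, invoke Proposition~\ref{p18} to translate exactness of $E_A$ into membership in $\mt(\A)$, and use the adjoint pair $(\smod(\PP),\com(\II))$ of Proposition~\ref{p15} to produce the equivalence $\mt(\A)\simeq\mt'(\A)$. The paper's own proof is even more compressed (it just says the first claim ``follows by equation~(\ref{18})'' and names the functor $M\mapsto(\tau_{<0}M)^{>0}$), so your version fills in useful detail, in particular the observation that exactness in the sense of Definition~\ref{d3} is $\mD(\A)$-invariant by Proposition~\ref{p12} and its dual.

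One small imprecision to flag: you assert that ``the image of $e$ in $\smod(\PP)$ is precisely $A$ by construction of $\lambda_A$.'' That is not automatic. The construction of Proposition~\ref{p15} only forces $e\colon P_0\to I_0$ to factor through $q\colon P_0\twoheadrightarrow A$, say $e=e'q$; the universal property of $C_A$ encodes that $\Hom(I_0,I)\to\Hom(A,I)$ is surjective for $I\in\II$, but it does not make $e'\colon A\to I_0$ a monomorphism. So in general the image of $e$ is only a quotient of $A$. The claim \emph{is} correct under either hypothesis you are actually running: if $E_A$ is exact, then left exactness at degree $0$ forces $\Ker(e)=\Im(P_{-1}\to P_0)=\Ker(q)$, hence $e'$ mono and image $=A$; and if $A\in\mt(\A)$, then an $\II$-resolution exists, is isomorphic to $C_A$ in $\com(\II)\subset\mD(\A)$ by the universal property, and the spliced complex from Proposition~\ref{p18} is identified with $E_A$ up to isomorphism in $\mD(\A)$, where exactness is invariant. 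Stating the claim conditionally (rather than ``by construction'') would make the argument airtight; as written it reads as an unconditional fact, which it is not.
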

\begin{proof}
The first statement follows by equation (\ref{18}). The equivalence between 
$\mt(\A)$ and $\mt'(\A)$ is given by 
$M\mapsto (\tau_{<0}M)^{>0}$ via the exact complexes $\cdots\ra P_{-1}\ra P_0\ra I_1\ra I_2 \ra\cdots$ with $P_{-n}\in\PP$ and $I_{n+1}\in\II$ for $n\in\mathbb{N}$. 
\end{proof}

\begin{cor}
Let $\A$ be an Ext-category with $\Proj(\A)$ left coherent. Then $\mt(\A)$ is a variety.
\begin{proof}
Let $e\colon M\lxr M$ be an idempotent in $\mt(\A)$. Then 
\[
\begin{tikzcd}
\cdots \arrow{r}  & M \arrow[r, "e"] & M \arrow{r}{1-e} & M \arrow[r, "e"] & M \arrow{r}{1-e} & M \arrow{r} & \cdots
\end{tikzcd}
\]
is an exact complex over $\mt(\A)$, hence acyclic by Corollary~\ref{corexactacyclic}. Thus $e$ splits.
\end{proof}
\end{cor}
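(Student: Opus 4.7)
The plan is to exhibit, for an arbitrary idempotent $e\colon M\lxr M$ in $\mt(\A)$, a concrete exact complex in $\mt(\A)$ whose acyclicity will force $e$ to split. The candidate is the $2$-periodic complex
\[
C\colon \quad \cdots \lxr M \xr{e} M \xr{1-e} M \xr{e} M \xr{1-e} M \lxr \cdots
\]
which is a complex since $e^2=e$ gives $e(1-e)=(1-e)e=0$.

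Next I would verify that $C$ is exact in the sense of Definition~\ref{d3}. This is a short direct check: at a spot where the outgoing map is $e$, any $f\colon P\lxr M$ with $P\in\Proj(\mt(\A))$ and $ef=0$ satisfies $f=(1-e)f$, so it factors through the incoming $1-e$; swapping the roles of $e$ and $1-e$ handles the other positions, and the dual argument with $\Inj(\mt(\A))$ in place of $\Proj(\mt(\A))$ yields right exactness. By Proposition~\ref{p17} and Corollary~\ref{corTTA} we have that $\mt(\A)$ is itself an Ext-category with left coherent projectives and $\mt(\mt(\A))=\mt(\A)$, so Corollary~\ref{corexactacyclic} applies to $C$ viewed over $\mt(\A)$ and yields that $C$ is acyclic in $\mt(\A)$.

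Acyclicity gives, at each position, a conflation $Z_{n-1}\rat M\tra Z_n$ in $\mt(\A)$ through which the differential factors. In particular, $e$ decomposes as $M\xr{p} Z \xr{i} M$ with $p$ a deflation and $i$ an inflation in $\mt(\A)$. From $e^2=e$ we obtain $ipip=ip$, and since $i$ is monic and $p$ is epic this forces $pi=\iden_Z$. Thus $i$ is a split monic in $\mt(\A)$, exhibiting $Z$ as a direct summand of $M$ on which $e$ acts as the identity, i.e.\ $e$ splits.

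There is no substantial obstacle here; the only subtlety is to invoke the acyclicity result in $\mt(\A)$ rather than in $\A$, for which one needs $\mt(\mt(\A))=\mt(\A)$ and the fact that $\mt(\A)$ is a left Ext-category with $\Proj(\mt(\A))=\add\PP$ left coherent — both supplied by Proposition~\ref{p17} and Corollary~\ref{corTTA}. Once this is in place, the exactness check on $C$ is immediate and the splitting of $e$ from the inflation/deflation factorization is a formal consequence of idempotency.
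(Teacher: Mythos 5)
Your proof is correct and follows the same route as the paper's: build the $2$-periodic complex with alternating differentials $e$ and $1-e$, observe it is exact over $\mt(\A)$, apply Corollary~\ref{corexactacyclic} to $\mt(\A)$ (using $\mt(\mt(\A))=\mt(\A)$ from Corollary~\ref{corTTA}) to get acyclicity, and then split $e$ from the resulting deflation--inflation factorization. You have merely spelled out the exactness check, the idempotency of $\mt$, and the final cancellation argument that the paper leaves implicit.
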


\begin{defn}
\label{d7}
Let $\A$ be an Ext-category with $\PP:=\Proj(\A)$ left 
coherent and $\II:=\Inj(\A)$ right coherent. We call $\A$ {\bf totally
acyclic} if every exact complex over $\A$ is acyclic, that is, 
$\mt(\A)=\A$. 
\end{defn} 

There is an important special case, given by the following

\begin{prop}
\label{p19}
Let $\A$ be an Ext-category with $\PP$ left coherent and 
$\II$ right coherent. If $\dim\A<\infty$, then $\A$ is 
totally acyclic, with
\[
\A = \mt^-(\A) = \mt_-(\A) = \mt^+(\A)=
\mt_+(\A).
\]   
\end{prop}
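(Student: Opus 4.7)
The plan is to prove the equalities $\mt^-(\A) = \mt_-(\A) = \A$ in $\smod\PP$; the parallel equalities $\mt^+(\A) = \mt_+(\A) = \A$ in $\com\II$ then follow by applying the same argument to $\A^{\op}$, since the hypotheses and $\dime\A$ are symmetric under $\A \leftrightarrow \A^{\op}$.

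I would first verify the forward inclusions $\A \subseteq \mt^-(\A) \cap \mt_-(\A)$, independently of the dimension hypothesis. For $A \in \A$, a projective resolution of $A$ in $\A$ (available since $\A$ has enough projectives) splits into conflations, which become short exact sequences in $\smod\PP$ under the embedding of Proposition~\ref{p13}. Applying $\Hom_\MM(-, I)$ for $I \in \II$ preserves exactness on each such conflation because $I$ is injective in $\A$ and the embedding is fully faithful; splicing gives $\Ext^{>0}_\MM(A, I) = 0$, i.e., $A \in \mt_-(\A)$. The dual argument using an injective coresolution of $A$ in $\A$ shows $A \in \mt^-(\A)$.

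The main step is $\mt_-(\A) \subseteq \A$. Setting $n := \dime\A$, Proposition~\ref{p16} provides for any $M \in \mt_-(\A) \subseteq \smod\PP$ an $\A$-resolution $0 \to A_n \to P_{n-1} \to \cdots \to P_0 \to M \to 0$ of length at most $n$, with $A_n \in \A$ and $P_i \in \PP$. I would induct on the length $m$ of such a resolution; the base $m=0$ is trivial. For the inductive step, set $M' := \ker(A_0 \to M)$. Dimension shifting in the long exact $\Ext^*_\MM(-, I)$-sequence of $0 \to M' \to A_0 \to M \to 0$, using $A_0 \in \A \subseteq \mt_-(\A)$, gives $M' \in \mt_-(\A)$, so by induction $M' \in \A$. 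Now choose an inflation $M' \rat I$ in $\A$ with $I \in \II$ and form the pushout of $M' \rat I$ along $M' \to A_0$ inside $\A$:
\[
\begin{tikzcd}
M' \arrow[r] \arrow[d, rightarrowtail] & A_0 \arrow[d, rightarrowtail] \arrow[r, twoheadrightarrow] & M \arrow[d, equal] \\
I \arrow[r] & A_0' \arrow[r, twoheadrightarrow] & M
\end{tikzcd}
\]
The exact-category axioms ensure that $A_0 \rat A_0'$ is an inflation and that $A_0'$ lies in $\A$. The bottom row is a short exact sequence in $\smod\PP$; since $\Ext^1_\MM(M, I) = 0$, it splits and $A_0' \cong I \oplus M$ in $\MM$. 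Hence $M$ is a direct summand of $A_0' \in \A$, whence $M \in \A$ by the standing closure of subcategories under summands.

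Finally, I would deduce $\mt^-(\A) \subseteq \A$ by proving the intermediate inclusion $\mt^-(\A) \subseteq \mt_-(\A)$, which requires only $\A \subseteq \mt_-(\A)$, not the dimension hypothesis. Given $M \in \mt^-(\A)$ with $\II$-resolution $0 \to M \to I^{\bullet}$, the exact sequence yields a quasi-isomorphism $M \simeq I^{\bullet}$ in $\mD(\smod\PP)$. Because each $I_k \in \II \subseteq \A$ satisfies $\Ext^{>0}_\MM(I_k, I) = 0$, the cohomology of $\Hom_\MM(I^{\bullet}, I)$ computes $\Ext^*_\MM(M, I)$; the Hom-exactness built into an $\II$-resolution places this cohomology in degree $0$, forcing $\Ext^{>0}_\MM(M, I) = 0$, i.e., $M \in \mt_-(\A)$. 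The principal obstacle is the pushout step: one must confirm that the pushout formed in the exact category $\A$ agrees with the short exact sequence in $\smod\PP$ whose cokernel is $M$, so that the splitting afforded by $\Ext^1_\MM(M, I) = 0$ really produces $M$ as a direct summand of an object of $\A$.
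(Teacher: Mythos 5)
Your proof is correct, and both of its key steps depart from the paper's route in interesting ways. For $\mt_-(\A)\subseteq\A$ the paper takes a projective resolution, notes $\Ker d_{n-1}\in\A$ by Proposition~\ref{p16}, and then climbs up the resolution \emph{inside} $\A$: at each stage the inflation into an injective factors through the kernel-to-projective map, so by divisivity (Lemma~\ref{lemdivisive}) that map is an inflation and its cokernel stays in $\A$. Your argument instead passes at each stage to a pushout $A_0'$ of $M'\rat I$ along $M'\to P_0$ and splits $0\to I\to A_0'\to M\to 0$ using $\Ext^1_\MM(M,I)=0$; this buys you a uniform appeal to closure under direct summands at the cost of the commutation-of-pushouts point you flag. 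That point is genuine but harmless: the pushout of an inflation in $\A$ is the cokernel of the inflation $\binom{-g}{f}\colon M'\to P_0\oplus I$, so the exact embedding of Proposition~\ref{p13} identifies it with the pushout in $\smod\PP$, and the bottom row of your square is indeed short exact there. For $\mt^-(\A)\subseteq\A$ the paper simply truncates the $\II$-resolution at length $n=\dime\A$ and invokes Proposition~\ref{p16}(iii) directly, whereas you prove the intermediate inclusion $\mt^-(\A)\subseteq\mt_-(\A)$ by computing $\Ext^*_\MM(M,I)$ via the $\Hom(-,I)$-acyclic $\II$-resolution; this works but is a detour, and it obscures the fact that the equality $\A=\mt^-(\A)$ is exactly what Proposition~\ref{p16} encodes. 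The forward inclusions $\A\subseteq\mt^-(\A)\cap\mt_-(\A)$, which the paper leaves implicit, you verify correctly. One small slip: in the inductive step you write $M':=\ker(A_0\to M)$ where from your own resolution the object should be $P_0$, not $A_0$; this is only a notational lapse.
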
 
\begin{proof}
By symmetry, it suffices to verify first two equations. The first one
follows by Proposition~\ref{p16}. Now assume that 
$M\in\smod(\PP)$ belongs to $\mt_-(\A)$. Consider a projective 
resolution $\cdots\ra P_{n-1}\stackrel{d_{n-1}}{\lra} P_{n-2}\ra\cdots\ra P_0\ra M\ra 0$ in $\smod(\PP)$ with $n:=\dim \A$. Then $A:=\Ker{d_{n-1}}\in\A$. So there is an inflation $i\colon A\rat I$ with $I\in\II$. If $n>0$, then $i$ factors through $A\ra P_{n-1}$. Hence $A\ra P_{n-1}$ is an inflation. So the image $A'$ of $d_{n-1}$ belongs to $\A$, and we can repeat the same argument for $A'$ instead of $A$. By induction, this yields $M\in\A$. Whence $\mt_-(\A)=\A$.     
\end{proof}

\begin{rem}
We close this section with some remarks on $\mt(\A)$.
\begin{enumerate}
\item The main challenge of this work was to define ``big-objects'' over an arbitrary exact category $\A$. The category $\mt(\A)$ will play a key role towards this problem but it turns out that it is ``too big'' for our purposes. This will become clear in the next section, see Definition~\ref{d8} and Remark~\ref{d8remark}.     

\item Let use rephrase Example~\ref{examgorproj}. Assume that $\A$ is a skeletally small additive category with the trivial exact structure. Then the acyclic closure $\mt(\A)$, as an exact category, is equivalent to the category $\GProj(\Mod(\A))$ of Gorenstein projective $\A$-modules.

\item There is a more abstract approach to $\mt(\A)$ by the second author \cite{RumpAcyclicclosure}. It turns out that $\mt(\A)$ is strongly connected to (algebraic) triangulated categories, in particular, the quotient $\mt(\A)/\A$ admits a triangulated structure. We refer to \cite{RumpAcyclicclosure} for more details and further investigations of $\mt(\A)$ with respect to tilting theory.
\end{enumerate}
\end{rem}

\section{Big Cohen-Macaulay modules}
\label{sectionAuslandertyperesult}
For a commutative noetherian local algebra $R$ over a field, the existence of big Cohen-Macaulay modules was proved by Hochster \cite{Hoc}. In his survey \cite{Ho79}, he lists nine homological conjectures which follow by this existence theorem (see \cite{PS, Ho73, Ho74} for earlier results). Griffith \cite{Gri0} refined Hochster's theorem by showing that over a complete regular local ring $R$, any module-finite domain $S$ with a big Cohen-Macaulay module admits a countably generated one. In \cite{Gri} he dealt with the question when a countably generated big Cohen-Macaulay module over complete local Gorenstein ring $R$ splits into a direct sum of finitely generated ones. He proved this if $R$ is representation-finite (\cite[Corollary 5.2]{Gri}). Recently, Bahlekeh-Fotouhi-Salarian \cite{Bahlekeh} proved that balanced big Cohen-Macaulay modules with an $m$-primary cohomological annihilator are fully decomposable provided that there is a bound on the h-length.

For artinian algebras, a similar result was obtained much earlier by Ringel and Tachi\-ka\-wa \cite{Tach, RT}. Auslander \cite{Au76} established the converse, so that an artinian algebra $A$ is represen\-ta\-tion-finite if and only if every $A$-module is a direct sum of finitely generated ones. In \cite{Ind}, the second author extend this theorem to Krull dimension 1, that is, classical orders over a complete discrete valuation domain. More recently, Beligiannis \cite{Bel} obtained a similar decomposition theorem for Gorenstein projectives over a complete noetherian commutative local ring $A$, provided that there exists a non-projective finitely generated Gorenstein projective $A$-module. For artinian algebras $A$, he proved that Gorenstein projectives split into finitely generated modules if and only if $A$ is virtually Gorenstein and CM-finite, improving Chen's theorem \cite{Chen} for Gorenstein algebras $A$.

In this section, we extend Auslander's theorem to a very general class of
exact categories $\A$, so that all these improvements follow by 
specializing $\A$ to categories of finitely generated Cohen-Macaulay modules or Gorenstein-projectives, respectively. In particular, we obtain a necessary and sufficient {\em splitting-big-objects} criterion for Cohen-Macaulay orders of arbitrary finite dimension.

As a first step, we give a general definition of a``big'' object. Let $\A$ be a skeletally small left Ext-category (see Section~\ref{sectionprelim}) with $\PP:=\Proj(\A)$. For a full subcategory $\C$ of $\Mod(\PP)$, where $\Mod(\PP)$ denotes the category of $\PP$-modules (i.e.\ additive functors from $\PP^{\op}$ to the category of abelian groups), we define $\Add \C$ to be the full subcategory of direct summands of coproducts $\coprod_{\gamma\in\Gamma} C_\gamma$ with $C_\gamma\in\C$. Since every object $A\in\A$ admits a deflation $P\tra A$ with $P\in\PP$, the objects of $\A$ are {\em compact} in $\Add\A$, that is, any morphism $A\lxr \coprod_{\gamma\in\Gamma} A_\gamma$ with $A,A_\gamma\in\A$ factors through a finite subcoproduct of $\coprod_{\gamma\in\Gamma} A_\gamma$. By \cite[Section~1]{Lat}, this implies that $\Add\A$ is
equivalent to ${\bf Add}\A:=\Proj(\Mod(\A))$, a category 
which can be constructed from $\A$ via formal coproducts. 

To obtain big objects except those in ${\bf Add}\A$, we define an 
increasing sequence $\ke_0(\A)\subs\ke_1(\A)\subs\ke_2(\A)\subs\cdots$ of full subcategories of $\Mod(\PP)$ as 
follows. First, we set $\ke_0(\A):=\Add\A$. Inductively, we
define $\ke_{n+1}(\A)$ to be the full subcategory of objects $L\in
\Mod(\PP)$ which fit into a short exact sequence $0\lxr L\lxr M\lxr A\lxr 0$ 
with $M\in\ke_n(\A)$ and $A\in\Add\A$. The union of all $\ke_n(\A)$ will be denoted by $\ke(\A)$. This subcategory will play a crucial role in the sequel and its objects will be some of the ``big objects'' of $\A$, the constructive ones (see Definition~\ref{d8}).

\begin{prop}
\label{p20}
Let $\A$ be a skeletally small Ext-category with $\PP:=\Proj(\A)$ and $\II:=\Inj(\A)$. Then $\ke(\A)$ is a resolving subcategory of $\Mod(\PP)$, and an Ext-category with $\Proj(\ke(\A))=\Add\PP$ and $\Inj(\ke(\A))=\Add\II$. 
\end{prop}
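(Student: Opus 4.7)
My plan is to show $\ke(\A)$ is a resolving subcategory of $\Mod(\PP)$ (closure under extensions and under kernels of epimorphisms, plus containment of $\Add\PP$), then to derive the inherited exact structure and identify the projectives and injectives. Easy preliminaries: $\PP \cup \II \subseteq \A$ gives $\Add\PP \cup \Add\II \subseteq \ke_0(\A)$; the filtration $\ke_0 \subseteq \ke_1 \subseteq \cdots$ is increasing via the trivial conflation $0 \to L \to L \to 0 \to 0$ (with $0 \in \Add\A$); and each $\ke_n(\A)$ is closed under arbitrary coproducts by induction using exactness of coproducts in $\Mod(\PP)$. The technical engine of the argument will be an Ext-vanishing statement $\Ext^1_{\Mod(\PP)}(N, J) = 0$ for all $N \in \ke(\A)$ and $J \in \Add\II$, which I verify first: finite presentation of $\A$-objects (from the left coherence of $\PP$) aligns $\Ext^i_{\Mod(\PP)}(A, -)$ with $\Ext^i_{\smod(\PP)}(A, -)$ and makes it commute with coproducts, while Proposition~\ref{p13} identifies this with $\Ext^i_\A(A, -)$, vanishing on $\II$. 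The vanishing extends to $\Add\A$ by coproduct/product compatibility and propagates up the filtration via the long exact sequences induced by the defining conflations of the $\ke_{n+1}(\A)$.

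For extension-closure (given $0 \to X \to Y \to Z \to 0$ in $\Mod(\PP)$ with $X, Z \in \ke(\A)$, deduce $Y \in \ke(\A)$), I induct on the filtration level $m$ of $X$. The step $X \in \ke_{m+1}$ uses the defining inflation $X \hookrightarrow X'$ (with $X' \in \ke_m$ and $X'/X \in \Add\A$) and the pushout $Y_1 := Y \sqcup_X X'$: the induced sequence $0 \to X' \to Y_1 \to Z \to 0$ gives $Y_1 \in \ke(\A)$ by the inductive hypothesis, and $0 \to Y \to Y_1 \to X'/X \to 0$ then places $Y$ in $\ke(\A)$ by the inductive definition. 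The base $X \in \Add\A$ I reduce, by adding a complementary summand to the sequence, to $X = \bigoplus_i A_i$ with $A_i \in \A$; injective envelopes in $\A$ give $0 \to X \to J \to C \to 0$ with $J \in \Add\II$ and $C \in \Add\A$; pushing out along $X \hookrightarrow Y$ yields $0 \to J \to Y_1 \to Z \to 0$, which splits by Ext-vanishing, so $Y_1 \cong J \oplus Z \in \ke(\A)$, and the complementary $0 \to Y \to Y_1 \to C \to 0$ then places $Y$ in $\ke(\A)$ by the inductive definition.

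For kernel-closure (given $0 \to X \to Y \to Z \to 0$ with $Y, Z \in \ke(\A)$, deduce $X \in \ke(\A)$), I induct on the level $n$ of $Z$. The base $Z \in \Add\A$ is the inductive definition of $\ke(\A)$. For $Z \in \ke_{n+1}$ with $0 \to Z \to Z' \to A \to 0$ ($Z' \in \ke_n$, $A \in \Add\A$), I choose a deflation $P \twoheadrightarrow Z'$ with $P \in \Add\PP$ and form the pullback $W := Y \times_{Z'} P$ along the composition $Y \twoheadrightarrow Z \hookrightarrow Z'$. The inductive hypothesis applied to $0 \to \Ker(P \to Z') \to P \to Z' \to 0$ (with $P \in \Add\PP$ and $Z' \in \ke_n$) gives $\Ker(P \to Z') \in \ke(\A)$; extension-closure applied to $0 \to \Ker(P \to Z') \to W \to Y \to 0$ then gives $W \in \ke(\A)$; and finally $0 \to X \to W \to P \to 0$, with $P \in \Add\PP \subseteq \Add\A$, places $X$ in $\ke(\A)$ by the inductive definition.

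With the resolving property established, $\ke(\A)$ inherits the exact structure from $\Mod(\PP)$, the conflations being the short exact sequences in $\Mod(\PP)$ with all three terms in $\ke(\A)$. Projective covers $P \twoheadrightarrow M$ in $\Mod(\PP)$ with $P \in \Add\PP$, combined with kernel-closure, provide enough projectives, and any projective in $\ke(\A)$ splits off such a cover, giving $\Proj(\ke(\A)) = \Add\PP$. Enough injectives I construct by induction on the filtration: $\ke_0 = \Add\A$ embeds coproduct-wise into $\Add\II$ using enough injectives in $\A$, and $M \in \ke_{n+1}$ composes the defining inflation $M \hookrightarrow M'$ with an injective embedding of $M'$ from the inductive hypothesis, the cokernel lying in $\ke(\A)$ by extension-closure. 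Ext-vanishing shows that $\Add\II$-objects are injective in $\ke(\A)$, and any injective splits off such an embedding, so $\Inj(\ke(\A)) = \Add\II$. The principal obstacle throughout is the Ext-vanishing; verifying it on $\A$ requires a careful transfer of Ext-groups along $\A \hookrightarrow \smod(\PP) \hookrightarrow \Mod(\PP)$ and compatibility with infinite coproducts, both guaranteed by the left coherence of $\PP$ via finite presentation of $\A$-objects.
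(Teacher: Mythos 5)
Your overall strategy — establish the Ext-vanishing $\Ext^1_{\Mod(\PP)}(-,\Add\II)=0$ on $\ke(\A)$, then prove extension-closure and kernel-closure by induction along the filtration, then read off the projectives and injectives — is the same architecture as the paper's proof, and the treatment of extension-closure (pushout along the defining inflation of the kernel, split off an $\Add\II$-piece at the base) matches the paper in all essentials. There are, however, two genuine problems.

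First, your Ext-vanishing argument is not valid under the hypotheses of Proposition~\ref{p20}. You route the vanishing through Proposition~\ref{p13} and the identification $\Ext^i_{\Mod(\PP)}(A,-)\cong\Ext^i_{\smod(\PP)}(A,-)\cong\Ext^i_{\A}(A,-)$; but Proposition~\ref{p13} assumes $\PP$ is left coherent (needed for $\smod(\PP)$ to be abelian), and Proposition~\ref{p20} does \emph{not} carry that hypothesis. The paper proves $\Ext^1_{\Mod(\PP)}(A,\bigoplus_i I_i)=0$ directly, without coherence: choose a conflation $A'\rat P\tra A$ with $P\in\PP$, use compactness of $A'$ in $\Add\A$ (a consequence of enough projectives alone) to factor any $A'\to\bigoplus_i I_i$ through a finite subcoproduct, form the pushout in $\A$, and observe the resulting conflation splits because a finite coproduct of injectives is injective. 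Your conclusion is correct, but the justification invokes a missing hypothesis; the compactness argument should replace it.

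Second, your proof of kernel-closure has a concrete error. With $W:=Y\times_{Z'}P$ taken along the composite $Y\tra Z\rat Z'$, the projection $W\to P$ is \emph{not} an epimorphism: its image is the preimage $g^{-1}(Z)\subsetneq P$ of $Z$ under $g\colon P\tra Z'$, since $Y\to Z'$ only hits $Z$. So the short exact sequence you obtain is $0\to X\to W\to g^{-1}(Z)\to 0$, not $0\to X\to W\to P\to 0$. Since $g^{-1}(Z)$ sits in $0\to g^{-1}(Z)\to P\to A\to 0$, it lies only in $\ke_1(\A)$ a priori, so the step does not close the induction at the base (when $n=0$ one is back at a cokernel in $\ke_1$, not $\Add\A$). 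The argument can be salvaged (for $n\ge 1$ one has $g^{-1}(Z)\in\ke_1\subs\ke_n$ and can apply the inductive hypothesis; for $n=0$ a Schanuel argument using a second resolution $A''\rat P''\tra A$ with $A''\in\Add\A$ shows $g^{-1}(Z)\in\Add\A$), but this is not in your write-up. The paper avoids the issue by establishing that $\ke(\A)$ has enough projectives from $\Add\PP$ first, then deducing kernel-closure via $N'\rat L\oplus P\tra M$ and the already-proved extension-closure, which is more robust. Finally, your base-case reduction in extension-closure (``add a complementary summand'') tacitly uses that $\ke(\A)$ is closed under direct summands, which has not been established at that point; the paper sidesteps this by working with the given $L\in\Add\A$ directly and using the Ext-vanishing to split the pushout.
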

\begin{proof}
We show first that $\Ext^1_{\Mod(\PP)}(M,I)=0$ for all $M$ in $\ke(\A)$ and $I$ in $\Add\II$. If $M$ is in $\Add\A$, then the assertion follows by the compactness of objects in $\A$. We now explain this, since is the key step of the induction that follows. Let $A'\lxr P\lxr A$ be a conflation in $\A$ with $P$ in $\PP$ and consider a map $A'\lxr \oplus_{i\in I}I_i$ with $I_i$ in $\II$. Since the objects of $\A$ are compact in $\Add\A$, the map $A'\lxr \oplus_{i\in I}I_i$ factors through a finite coproduct, i.e. through a map $A'\lxr \oplus_{j\in J}I_i$ with $|J|<\infty$. Taking the pushout of $A'\lxr P\lxr A$ along the map $A'\lxr \oplus_{j\in J}I_i$  we get a split conflation $\oplus_{j\in J}I_i\lxr P'\lxr A$ in $\A$ and thus a split sequence in $\Mod(\PP)$. This implies that also the induced conflation  $\oplus_{i\in I}I_i\lxr P''\lxr A$ splits. This shows that $\Ext^1_{\Mod(\PP)}(A,I)=0$.
Thus, by induction, assume that there is a short exact sequence $0\lxr M\lxr M'\lxr A\lxr 0$ with $M'$ in $\ke(\A)$ and $A$ in $\Add\A$ such that the assertion holds for $M'$ instead of $M$. Then the exact sequence $\Ext^1_{\Mod(\PP)}(M',I)\lxr \Ext^1_{\Mod(\PP)}(M,I)\lxr \Ext^2_{\Mod(\PP)}(A,I)$ proves the claim.

To show that $\ke(\A)$ is closed under extensions in $\Mod(\PP)$, let $L\stackrel{a}{\rat} M\stackrel{b}{\tra} N$ be a short exact sequence in $\Mod(\PP)$ with $L\in\ke_n(\A)$ and $N\in\ke(\A)$. Assume first that $n>0$. So there is a short exact sequence $L\rat L'\tra A$ with $L'\in\ke_{n-1}(\A)$ and $A\in \Add\A$. This gives a commutative diagram 
\begin{equation}\label{19}
\begin{tikzcd}
L \arrow[r, rightarrowtail, "a"] \arrow[d, rightarrowtail, "i"] & M \arrow[r, twoheadrightarrow, "b"] \arrow[d, rightarrowtail] & N \arrow[d, equal] \\
L' \arrow[r, rightarrowtail] \arrow[d, twoheadrightarrow] & M' \arrow[d, twoheadrightarrow] \arrow[r, twoheadrightarrow] & N  \\
A  \arrow[r, equal]  & A &
\end{tikzcd}  
\end{equation} 
with short exact rows and columns. If we can show that $M'\in\ke(\A)$, then $M\in\ke(\A)$. Hence, by induction, we can assume that $n=0$. So there is a
short exact sequence $L\rat L'\tra A$
with $L'\in\Add\II$ and $A\in\Add\A$. This gives a commutative diagram (\ref{19}) where $i$ factors through $a$. Hence $M'\cong L'\oplus N\in\ke(\A)$, and thus $M$ lies in $\ke(\A)$. 

So the short exact sequences in $\Mod(\PP)$ induce an exact structure on 
$\ke(\A)$. Now we show that $\Add\PP$ provides enough 
projectives for this exact structure. Let $L\in\ke_n(\A)$ be given. 
If $n=0$ there is a conflation $L'\rat P\tra L$ with $P\in \Add\PP$ and $L'\in\Add\A$. Thus, assume that $n>0$. Then we
have a conflation $L\rat M\tra A$ 
with $M\in\ke_{n-1}(\A)$ and $A\in\Add\A$. By induction, we
can assume that there exists a conflation $M'\rat P\tra M$ in 
$\ke(\A)$ with $P\in \Add\PP$. This gives a commutative diagram 
\[
\begin{tikzcd}
M' \arrow[r, equal] \arrow[d, rightarrowtail] & M' \arrow[d, rightarrowtail] &  \\
A' \arrow[r, rightarrowtail] \arrow[d, twoheadrightarrow] & P \arrow[d, twoheadrightarrow] \arrow[r, twoheadrightarrow] & A \arrow[d, equal]  \\
L  \arrow[r, rightarrowtail]  & M \arrow[r, twoheadrightarrow] & A
\end{tikzcd} 
\]
with exact rows and columns. Furthermore, we have a conflation $A''\rat Q\tra A$ with $Q\in\Add\PP$ and $A''\in \Add\A$. By Schanuel's lemma, $A'\oplus Q\cong A''\oplus P$. Hence $A'\in\Add\A$. So there is a deflation $P'\tra A'\tra L$ in $\ke(\A)$ with $P'\in\Add\PP$. This proves that 
$\ke(\A)$ has enough projectives. Since any deflation onto a projective object of $\ke(\A)$ splits, we obtain $\Proj(\ke(\A))=\Add\PP$.

Next, let $L\rat M\tra N$ be a short exact sequence in $\Mod(\PP)$ 
with $M,N\in\ke(\A)$. Then there is a conflation $N'\rat P\tra N$ in 
$\ke(\A)$ with $P\in\Add\PP$. So we have a commutative 
diagram 
\[
\begin{tikzcd}
N' \arrow[d] \arrow[r, rightarrowtail] & P \arrow[d] \arrow[r, twoheadrightarrow] & N \arrow[d, equal]  \\
L \arrow[r, rightarrowtail] & M \arrow[r, twoheadrightarrow] & N  
\end{tikzcd} 
\]
Hence $N'\rat L\oplus P\tra M$ is a conflation in $\ke(\A)$. Thus 
$L\rat L\oplus P\tra P$ is a short exact sequence, which yields that $L$ belongs to $\ke(\A)$. This proves that $\ke(\A)\hra\Mod(\PP)$ is resolving. 

By construction, every object $M\in\ke(\A)$ admits a finite 
sequence of inflations 
\[
\begin{tikzcd}
M \arrow[r, rightarrowtail] & M_1 \arrow[r, rightarrowtail] & M_2 \arrow[r, rightarrowtail] & \cdots \arrow[r, rightarrowtail] & A \arrow[r, rightarrowtail] & I  
\end{tikzcd} 
\]
with $A\in\Add\A$ and $I\in\Add\II$, and any object of $\II$ is injective in $\ke(\A)$. We infer that $\Inj(\ke(\A))=\Add\II$.
\end{proof}

\begin{rem}
\label{rembigobjects}
What we really show in the above result is that if
$\A$ is a skeletally small left Ext-category, then $\ke(\A)$ is a resolving subcategory of $\Mod(\PP)$ with $\Proj(\ke(\A))=\Add\PP$. Moreover, if
$\A$ is an Ext-category, then $\ke(\A)$ is an 
Ext-category with $\Inj(\ke(\A))=\Add\II$.

We infer that $\ke(\A)$ is the smallest resolving subcategory of $\Mod(\PP)$ 
which contains $\Add\A$. The fact that $\ke(\A)$ is the smallest follows by the construction of $\ke(\A)$. More precisely, suppose that $\C$ is a resolving subcategory of $\Mod(\PP)$ with the projectives and injectives of $\ke(\A)$ and which contains $\Add\A$. We claim that $\ke(\A)\subseteq \C$. Take an object $L$ in $\ke(\A)$ and suppose first that it lies in  $\ke_0(\A)$. Then $L$ lies in $\Add\A$, so it lies in $\C$. If $L$ belongs to $\ke_1(\A)$, then there is an exact sequence $0\lxr L\lxr M\lxr A\lxr 0$ 
with $M\in\Add\A$ and $A\in\Add\A$. Since $\C$ is resolving, we infer that $L\in \C$. In the same way, $L$ in $\ke(\A)$ means that $L$ belongs to $\ke_n(\A)$ for some $n$, and as before we obtain that $L\in \C$. This shows our claim and therefore $\ke(\A)$ is indeed the smallest resolving subcategory of $\Mod(\PP)$ which contains $\Add\A$.

Since $\Add\PP$ is left coherent, Proposition~\ref{p13} implies that up to equivalence, $\ke(\A)$ is 
the smallest Ext-category $\C$ containing ${\bf Add}\A$ with 
$\Proj\C\simeq {\bf Add}\PP (:=\Proj(\Mod(\PP)))$. So it must be contained in any
reasonable category of ``big'' objects with respect to $\A$. 
\end{rem}
   
It has been observed that well-behaved ``big objects'' should be 
representable as filtered colimits of small ones. For example, the Gorenstein projective modules over a CM-finite artinian algebra do not decompose into finitely generated ones unless they are direct limits of them (\cite[Theorem~4.10]{Bel}). We briefly discuss the role of this condition for an Ext-category $\A$ with $\PP:=\Proj(\A)$. Let $\varinjlim\A$ be the full subcategory of objects in $\Mod(\PP)$ which are filtered colimits of objects in $\A$. The following criterion is based on a well-known fact about direct limits (cf. \cite{Len}, Proposition~2.1). For completeness, we include a proof.

\begin{lem}
\label{l5}
Let $\A$ be a skeletally small Ext-category with $\PP:=\Proj(\A)$.
An object $M\in\Mod(\PP)$ belongs to $\varinjlim\A$ if and only if any
morphism $E\lxr M$ with $E\in\smod(\PP)$ factors through an object of $\A$.      
\begin{proof}
($\Longrightarrow$) Let $M=\varinjlim_{\gamma\in\Gamma} A_\gamma$ be a filtered colimit with 
$A_\gamma\in\A$, and let $f\colon E\lxr M$ be a morphism with $E\in\smod(\PP)$. Choose a presentation $P_1\stackrel{p}{\lra} P_0\stackrel{q}{\tra} E$ with $p\in\PP$. Then $fq=a_\gamma g$ for some $a_\gamma\colon A_\gamma\lxr M$ and $g\colon P_0\lxr A_\gamma$. Hence $a_\gamma gp=0$. So there exists a morphism $a_{\gamma,\delta}\colon A_\gamma\lxr A_\delta$ in the limit diagram which satisfies $a_{\gamma,\delta}gp=0$. Hence $a_{\gamma,\delta}g=hq$ for some $h\colon E\lxr A_\delta$, and thus $f=a_\delta h$. 

($\Longleftarrow$) Conversely, assume that the criterion holds for $M\in\Mod(\PP)$. Consider the diagram of all morphisms $a\colon A\lxr M$ with $A\in\A$. Morphisms from $a$ to $a'\colon A'\ra M$ are the morphisms $f\colon A\lxr A'$ in $\A$ with $a=a'f$. Then $\PP\subs\A$ implies that $M$ is the colimit of its
diagram. Any pair $a\colon A\lxr M$ and $a'\colon A'\lxr M$ is majorized by
$(a\;a')\colon A\oplus A'\lxr M$. For any pair $f,g\colon a\lxr a'$ of morphisms in the diagram we have $a'(f-g)=0$. Thus $a'$ factors through $\Cok(f-g)\in\smod(\PP)$, hence through an object of $\A$. So the diagram is filtered, which proves that $M\in\varinjlim\A$.
\end{proof}
\end{lem}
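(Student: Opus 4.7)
The plan is to deploy the standard ``compact/finitely-presented'' technology in two symmetric moves: for $(\Longrightarrow)$, use that objects of $\smod(\PP)$ behave like finitely presented objects with respect to filtered colimits in $\Mod(\PP)$; for $(\Longleftarrow)$, realise $M$ as the colimit of the canonical diagram of all morphisms from $\A$ into $M$.

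For the forward direction, I would write $M = \varinjlim_{\gamma \in \Gamma} A_\gamma$ with $A_\gamma \in \A$ and choose a presentation $P_1 \xrightarrow{p} P_0 \xrightarrow{q} E \to 0$ with $P_0, P_1 \in \PP$. Given $f \colon E \lxr M$, the composite $fq \colon P_0 \lxr M$ lands in a filtered colimit, and compactness of $P_0$ in $\Mod(\PP)$ (representable functors are compact) yields a factorisation $fq = a_\gamma g$ through some structure map $a_\gamma \colon A_\gamma \lxr M$. The relation $a_\gamma g p = 0$ need not force $gp = 0$, but compactness of $P_1$ guarantees that it is annihilated after composing with some transition map $a_{\gamma,\delta} \colon A_\gamma \lxr A_\delta$ in the limit diagram; then $a_{\gamma,\delta} g$ descends uniquely across $q$ to the desired morphism $h \colon E \lxr A_\delta$ with $a_\delta h = f$.

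For the converse, I would form the diagram $\mathcal{D}$ whose objects are pairs $(A,a)$ with $A \in \A$ and $a \colon A \lxr M$, and whose morphisms $(A,a) \lxr (A',a')$ are maps $A \lxr A'$ in $\A$ making the obvious triangle commute. Showing $M \cong \varinjlim_{\mathcal{D}} A$ then reduces to checking that $\mathcal{D}$ is filtered. Binary upper bounds come for free from biproducts: $(A \oplus A', (a\;a'))$ dominates $(A,a)$ and $(A',a')$. For parallel morphisms $f, g \colon (A,a) \rightrightarrows (A',a')$, one has $a'(f-g) = 0$, so $a'$ factors through $\Cok(f-g)$, which lies in $\smod(\PP)$ since $A'$ does; the factorisation hypothesis then supplies a further factorisation through some $A'' \in \A$, yielding the morphism in $\mathcal{D}$ that coequalises $f$ and $g$. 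Because $\PP \subs \A$ puts every representable functor into the diagram, the canonical comparison $\varinjlim_{\mathcal{D}} A \lxr M$ is an isomorphism (surjectivity and injectivity of evaluation at each $P \in \PP$).

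The main obstacle is the equalisation step: one has to recognise $\Cok(f-g)$ as a genuine object of $\smod(\PP)$ so that the hypothesis applies, and then extract the coequaliser in $\A$ from the resulting factorisation. Everything else is either compactness bookkeeping or a routine colimit computation.
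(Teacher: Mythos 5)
Your proof is correct and follows essentially the same route as the paper's: the forward direction uses compactness of the representables $P_0$, $P_1$ against the filtered colimit to factor $fq$ and then descend across $q$, and the converse realises $M$ as the colimit of the canonical diagram of maps $A\to M$ with $A\in\A$, checking filteredness via biproducts for binary bounds and the hypothesis applied to $\Cok(f-g)\in\smod(\PP)$ for coequalisers. The only cosmetic difference is that you spell out the compactness bookkeeping and the final comparison isomorphism a bit more explicitly, but every key step coincides with the paper's argument.
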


\begin{prop}
\label{proplimresolving}
Let $\A$ be a skeletally small left Ext-category with $\PP$ left coherent. Then $\varinjlim\A$ is a resolving subcategory of $\Mod(\PP)$ with $\ke(\A)\subset \varinjlim\A$.
\begin{proof}
Let $L\stackrel{a}{\rat} M\stackrel{b}{\tra} N$ be a short exact sequence 
in $\Mod(\PP)$. Assume first that $L,N\in\varinjlim\A$. Let $f\colon E\lxr M$ be a morphism with $E\in\smod(\PP)$. So there are morphisms $e\colon E\lxr A$
and $g\colon A\lxr N$ with $A\in\A$ and $ge=bf$. Choose a deflation $p\colon P\tra A$ with $P\in\PP$. Then there is a morphism $g'\colon P\ra M$ with $gp=bg'$. Consider the following pullback diagram in $\smod(\PP)\colon$
\[
\begin{tikzcd}
E' \arrow[d, "e'"] \arrow[r, twoheadrightarrow, "q"] & E \arrow[d, "e"] \\
P \arrow[r, twoheadrightarrow, "p"] & A   
\end{tikzcd} 
\]
Since $b(g'e'-fq)=0$, we find a morphism $h\colon E'\lxr L$ with $g'e'-fq=ah$. So there are morphisms $p'\colon E'\lxr A'$ and 
$a'\colon A'\lxr L$ with $h=a'p'$. Now consider the pushout
\[
\begin{tikzcd}
E' \arrow[d, "p'"] \arrow[rr, rightarrowtail, "(-q \ \, e')^t"] && E\oplus P \arrow[d] \arrow[rr, twoheadrightarrow, "(e \ \, p)"] && A \arrow[d, equal]  \\
A' \arrow[rr, rightarrowtail] && C \arrow[rr, twoheadrightarrow] && A  
\end{tikzcd} 
\]
in $\smod(\PP)$. By Proposition~\ref{p13}, it follows that $C$ is in $\A$. Since we have the equation $(f\;g')\binom{-q}{e'}=ah=aa'p'$, by the universal property of pushout we get that $(f\;g')$ factors through $E\oplus P\lxr C$. This implies that $f$ factors through $C$.

Next, assume that $M,N\in\varinjlim\A$, and let $f\colon E\lxr L$ be a 
morphism with $E\in\smod(\PP)$. So there are morphisms $e\colon E\lxr A$
and $g\colon A\lxr M$ with $A\in\A$ and $ge=af$. Since $bg$ factors through 
the cokernel of $e$, there is a map $\lambda\colon \Coker{e}\lxr N$ such that $\lambda\pi=bg$. Since $E$ and $A$ belong to $\smod(\PP)$, it follows that $\Coker{e}$ lies also in $\smod(\PP)$. Thus, by Lemma~\ref{l5} the map $\lambda$ factors through an object $A'$. Then we have the following exact commutative diagram$\colon$
\[
\begin{tikzcd}
E \arrow[rr, "e"] \arrow[dd, "f"] && A \arrow[dd, "g"] \arrow[dr, dashed, "a'"]  \arrow[rr, twoheadrightarrow, "\pi"] && \Coker{e} \arrow[dd, "\lambda"]  \arrow[ld, "h'"] \\
  &&  & A' \arrow[dr, "h"] & \\
L \arrow[rr, rightarrowtail, "a"] &&  M \arrow[rr, twoheadrightarrow, "b"] && N   
\end{tikzcd} 
\]
where $\lambda=hh'$, $a'=h'\pi$ and $a'e=0$. Choose a deflation $p\colon P\tra A'$ with $P\in\PP$, and consider the pullback 
\[
\begin{tikzcd}
C \arrow[d, "c"] \arrow[r, twoheadrightarrow, "q"] & A \arrow[d, "a'"] \\
P \arrow[r, twoheadrightarrow, "p"] & A'   
\end{tikzcd} 
\]
in $\A$. So there is a morphism $e'\colon E\lxr C$ with $qe'=e$ and $ce'=0$. Since $P\in\PP$, we find a morphism $r\colon P\lxr M$ with $hp=br$. Thus $b(gq-rc)=0$, which yields a morphism $s\colon C\lxr L$ with $gq-rc=as$. Hence $a(f-se')=ge-(gq-rc)e'=ge-gqe'=0$, and thus $f=se'$. Now  
$\Add\PP\subs\Add\A\subs\varinjlim\A$ implies that 
$\varinjlim\A$ is a resolving subcategory of $\Mod(\PP)$, and consequently, 
$\ke(\A)\subs\varinjlim\A$.
\end{proof}
\end{prop}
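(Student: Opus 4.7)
The plan is to exploit the characterisation of $\varinjlim\A$ provided by Lemma~\ref{l5}: an object $M\in \Mod(\PP)$ lies in $\varinjlim\A$ if and only if every morphism $E\lxr M$ with $E\in \smod(\PP)$ factors through an object of $\A$. This criterion is obviously inherited by direct summands, so from $\PP\subs\A$ we immediately obtain $\Add\PP\subs\Add\A\subs \varinjlim\A$, since each coproduct of objects of $\A$ is the filtered colimit of its finite subcoproducts, which lie in $\A$. The task thus reduces to verifying that $\varinjlim\A$ is closed under extensions and under kernels of deflations in $\Mod(\PP)$; once this is done, an induction along the filtration $\ke_0(\A)\subs \ke_1(\A)\subs\cdots$ defining $\ke(\A)$ yields the desired inclusion $\ke(\A)\subs\varinjlim\A$.

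For closure under extensions, consider a short exact sequence $L\rat M\tra N$ in $\Mod(\PP)$ with $L,N\in\varinjlim\A$, write $a,b$ for the inflation and deflation, and take a test morphism $f\colon E\lxr M$ with $E\in\smod(\PP)$. Factor $bf$ through some $A\in\A$ via $e\colon E\lxr A$ and $g\colon A\lxr N$, choose a deflation $p\colon P\lxr A$ with $P\in\PP$, and lift $gp$ through $b$ to a morphism $g'\colon P\lxr M$ using the fact that $P$ is projective in $\Mod(\PP)$. Forming the pullback $E'=E\times_A P$ in $\smod(\PP)$ with structure maps $q\colon E'\lxr E$ and $e'\colon E'\lxr P$, the difference $g'e'-fq$ is annihilated by $b$, hence factors through $L$ via some $h\colon E'\lxr L$, which by the hypothesis on $L$ factors further through an object $A'\in\A$. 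Taking the pushout of the inflation $E'\lxr E\oplus P$ (assembled from $-q$ and $e'$) along $E'\lxr A'$ produces a short exact sequence $A'\rat C\tra A$ in $\smod(\PP)$; Proposition~\ref{p13} then guarantees $C\in\A$, and a direct diagram chase shows that $f$ factors through the induced morphism $E\lxr C$. Closure under kernels of deflations is obtained by a dual construction: given $M,N\in\varinjlim\A$ and $f\colon E\lxr L$, factor $af$ through some $A\in\A$, push the induced morphism $\Coker(E\lxr A)\lxr N$ through an object $A'\in\A$, choose a deflation $P\lxr A'$ with $P\in\PP$, and assemble the data via a pullback in $\A$ to produce the required factorisation of $f$ through an object of $\A$.

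Finally, $\ke(\A)\subs\varinjlim\A$ follows by induction on $n$: the base case $\ke_0(\A)=\Add\A\subs\varinjlim\A$ is already in hand, and if $L\in\ke_{n+1}(\A)$ fits in a short exact sequence $0\lxr L\lxr M\lxr A\lxr 0$ with $M\in\ke_n(\A)\subs\varinjlim\A$ and $A\in\Add\A\subs\varinjlim\A$, then the closure of $\varinjlim\A$ under kernels of deflations forces $L\in\varinjlim\A$. The main obstacle is the diagrammatic work behind the two closure properties: the hypothesis that $\PP$ is left coherent (so $\smod(\PP)$ is abelian) and the fact from Proposition~\ref{p13} that $\A$ embeds as a resolving subcategory of $\smod(\PP)$ must both be invoked to keep the auxiliary pullback and pushout objects inside $\A$ at each step.
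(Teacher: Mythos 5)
Your proposal is correct and follows essentially the same route as the paper: you invoke Lemma~\ref{l5} to reduce to closure under extensions and kernels of deflations, carry out the identical pullback/pushout constructions (including the use of Proposition~\ref{p13} to keep the intermediate object $C$ inside $\A$), and then deduce $\ke(\A)\subset\varinjlim\A$ by induction on the filtration defining $\ke(\A)$, just as the paper implicitly does via the ``smallest resolving subcategory'' description. The only difference is that you summarise the second closure argument as a ``dual construction'' rather than spelling it out, but the outline matches the paper's step for step.
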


\begin{cor}
\label{corkecapsmodP}
Let $\A$ be a skeletally small Ext-category. Then
\[
\ke(\A)\cap \smod(\PP)\simeq {\bf Add}\A.
\]
\begin{proof}
For any object $M$ in $\ke(\A)\cap \smod(\PP)$, we have that $M$ lies in
$\varinjlim\A$. Then, by Lemma~\ref{l5} it follows that the identity map $\iden_M$ factors through an object of $\A$. We infer that $M$ belongs to ${\bf Add}\A$.
\end{proof}
\end{cor}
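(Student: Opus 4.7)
The plan is to establish both inclusions of the (essentially set-theoretic) identification $\ke(\A)\cap\smod(\PP) \simeq {\bf Add}\A$ using the preparatory material assembled just above the statement. The nontrivial direction is $\ke(\A)\cap \smod(\PP)\subseteq {\bf Add}\A$, and this is precisely what the preceding Proposition~\ref{proplimresolving} together with Lemma~\ref{l5} are engineered to deliver.

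First I would prove $\ke(\A)\cap\smod(\PP)\subseteq {\bf Add}\A$. Let $M$ lie in $\ke(\A)\cap\smod(\PP)$. Since by Proposition~\ref{proplimresolving} the category $\ke(\A)$ is contained in $\varinjlim\A$, the object $M$ belongs to $\varinjlim\A$. Now $M$ is finitely presented as a $\PP$-module, so Lemma~\ref{l5} applies to the identity morphism $\iden_M\colon M\lxr M$ with $E:=M\in\smod(\PP)$: one obtains a factorisation $M\lxr A\lxr M$ with $A\in\A$ whose composite is $\iden_M$. Hence $M$ is a direct summand in $\Mod(\PP)$ of an object $A\in\A$, which by definition places $M$ in $\Add\A$, i.e.\ in ${\bf Add}\A$ under the equivalence $\Add\A\simeq {\bf Add}\A$ recalled at the beginning of Section~\ref{sectionAuslandertyperesult}.

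For the reverse inclusion I would argue as follows. By Proposition~\ref{p13}, each $A\in\A$ is (up to the natural full embedding) an object of $\smod(\PP)$, and by construction $\Add\A=\ke_0(\A)\subseteq\ke(\A)$. Therefore any $M$ which, in the realisation of ${\bf Add}\A$ as a subcategory of $\Mod(\PP)$, happens to be finitely presented, automatically lies in $\ke(\A)\cap\smod(\PP)$. Equivalently, a finitely presented object of $\Add\A$ must factor its identity through a finite subcoproduct of the ambient coproduct (by compactness of objects of $\A$ inside $\Add\A$, noted at the start of the section) and is therefore a summand of a finite coproduct of objects of $\A$; such objects visibly lie in $\ke_0(\A)\cap\smod(\PP)$.

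There is no serious obstacle: the whole statement is a clean packaging of Proposition~\ref{proplimresolving} and Lemma~\ref{l5}. The only point that requires care is the compatibility between the two descriptions of $\Add\A$, namely as a subcategory of $\Mod(\PP)$ consisting of direct summands of coproducts of objects of $\A$ and as the projective $\A$-modules $\Proj(\Mod(\A))={\bf Add}\A$; but this identification is exactly the one recorded at the beginning of the section (following \cite{Lat}), so one may freely use either guise when phrasing the final conclusion.
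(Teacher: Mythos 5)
Your forward inclusion $\ke(\A)\cap\smod(\PP)\subseteq {\bf Add}\A$ is exactly the paper's proof: apply Proposition~\ref{proplimresolving} to place $M$ in $\varinjlim\A$, then Lemma~\ref{l5} to $\iden_M$ to exhibit $M$ as a retract of an object of $\A$. The paper leaves the reverse containment implicit (it is immediate from $\Add\A=\ke_0(\A)\subseteq\ke(\A)$), which you spell out; this is a welcome but inessential elaboration, and your overall argument coincides with the paper's.
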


\begin{defn}
\label{d8}
Let $\A$ be a skeletally small Ext-category with $\PP:=\Proj(\A)$ left coherent. We define 
\begin{enumerate}
\item the category of {\bf accessible big $\A$-objects} to be 
$\ke(\A)$, and 

\item the category of {\bf big $\A$-objects} to be 
$\mt(\ke(\A))$.
\end{enumerate}
\end{defn}  

We clarify below the above two notions of big objects.

\begin{rem}
\label{d8remark}
\begin{enumerate}
\item By Proposition~\ref{p20} and Corollary~\ref{corTTA}, $\mt(\ke(\A))$ is the largest Ext-category containing ${\bf Add\A}:=\Proj(\Mod(\A))$ with ${\bf Add}\Proj\A$ and ${\bf Add}\Inj\A$ as subcategories of projectives and injectives, respectively, while $\ke(\A)$ is the smallest Ext-category with this property. Note that $\mt(\ke(\A))$ contains $\mt(\A)$, which is usually bigger than ${\bf Add}\A$. Thus, in view of Proposition~\ref{proplimresolving} and Corollary~\ref{corkecapsmodP}, $\ke(\A)$ seems to be a better choice than  
$\mt(\ke(\A))$ as a category of big $\A$-objects.

\item Let $R$ be a complete regular local ring, and let $\Lambda$ be a 
Cohen-Macaulay $R$-order. In this context, accessible big Cohen-Macaulay modules according to Definition~\ref{d8} are just what they ought to be (\cite[Section~2]{Lat}), that is, every accessible big $\CM(\Lambda)$-object is $R$-free. This is clear since every object of $\ke(\CM(\Lambda))$ is $R$-free. 

\item In the commutative setting, Griffith \cite[Theorem 1.2, Corollary 2.4]{Gri} proved that if $R$ is a local regular ring and $B$ a module finite extension of $R$ which is Gorenstein, then a $B$-module $C$ is free as an $R$-module if and only if  $C$ is an infinite sygyzy as a $B$-module. The latter means precisely that $C$ is Gorenstein-projective as a $B$-module.
\end{enumerate}
\end{rem}

\begin{rem}
The name ``accessible big'' $\A$-objects is due to its constructive nature. We remark that not all big objects are accessible, for instance big Gorenstein projective modules provide such examples. Let us explain further with an example.

Let $R$ be a self-injective ring. Then all $R$-modules are Gorenstein-projective. Consider the exact category $\A$ of finitely generated projectives/injectives with the trivial exact structure. 
Then, from Proposition~\ref{proplimresolving} the category $\ke(\A)$ is contained in the direct limit category $\varinjlim\A$. Recall that the Govorov-Lazard Theorem says that the closure under direct limit of the class of finitely generated projective modules is equal to the class of flat modules. Hence, in our case we obtain that $\ke(\A)$ is contained in the category of flat $R$-modules. However, we know that not every module may be flat. 
\end{rem}

\begin{exam}
\label{examGproj}
Let $R$ be a ring. By Propositions~\ref{p17} and \ref{p18}, the Gorenstein-projective $R$-modules $\GProj(R):=\mt(\Add{R})=\mt(\ke({\add}R))$ form a resolving subcategory of $\Mod(R)$ (see subsection~\ref{subsectiongorprojectives}). Hence $\GProj(R)$ consists of the big $\add{R}$-objects. If $R$ is left coherent, the category of finitely generated Gorenstein-projective $R$-modules is 
$\Gproj(R):=\mt(\add{R})$. It is a Frobenius category 
with ${\Proj}(\Gproj(R))=\Inj(\Gproj(R))=\add{R}$. Thus 
\[
\GProj(R)=\mt(\ke(\add{R}))=\mt(\ke(\Gproj(R)))
\]
consists of the big $\Gproj(R)$-objects.
\end{exam}

We are now ready to prove the first main result of this section, which gives a general version of Auslander's theorem \cite{Au76} in terms of the category $\ke(\A)$.

\begin{thm}
\label{t3}
Let $\A$ be a left Ext-category. Assume that every object of $\ke(\A)$ is a direct sum of objects in $\A$. Then $\A/[\PP]$ is strongly left noetherian.
\begin{proof}
Consider a family of morphisms $f_{\gamma}\colon A_{\gamma}\lxr A$ in $\A$ with $\gamma\in \Gamma$. Let $p\colon P\lxr A$ be a deflation in $\A$ with $P$ in $\PP$. If $M:=P\oplus \coprod_{\gamma\in \Gamma}A_{\gamma}$, the maps $f_{\gamma}$ together with $p$ give a deflation $q\colon M\lxr A$ in $\ke(\A)$. Thus, we have the following commutative diagram with exact rows in $\ke(\A)\colon$
\begin{equation}
\label{eqthmpushoutsq}
\begin{tikzcd}
A' \arrow[d, rightarrowtail] \arrow[r, rightarrowtail, "j"] & P \arrow[d, rightarrowtail, "i"] \arrow[r, twoheadrightarrow, "p"] & A \arrow[d, equal]  \\
L \arrow[r, rightarrowtail] & M \arrow[r, twoheadrightarrow, "q"] & A  
\end{tikzcd} 
\end{equation}
Note that $i=(1 \ \, 0)^t$, $q=(p, \ \, \coprod_{\gamma\in \Gamma} f_{\gamma})$ and the morphism $A'\lxr L$ is an inflation. From our assumption, the object $L$ is a direct sum of objects in $\A$. Since $A'$ lies also in $\A$, there is a decomposition $L=A_0\oplus L'$ with $A_0$ in $\A$ and $L'$ in $\Add\A$ such that $A'\lxr L$ factors through $A_0\lxr L$. In fact, there is an inflation $a\colon A'\lxr A_0$ such that the following diagram commutes$\colon$
\[
\begin{tikzcd}
A' \arrow[d, rightarrowtail, "a"] \arrow[r, rightarrowtail, "(a \ \, 0)^t" ] & A_0\oplus L'   \\
A_0 \arrow[ru, rightarrowtail, "(1 \ \, 0)^t"'] & 
\end{tikzcd} 
\]
We write $(i' \ \, j')$ for the inflation $A_0\oplus L'\lxr M$. Since the square on the left hand side of diagram $(\ref{eqthmpushoutsq})$ is a pushout, we have the following conflation in $\ke(\A)\colon$
\[
\begin{tikzcd}
A' \arrow[rr, rightarrowtail, "(-j \ \, a \ \, 0)^t"] && P\oplus A_0\oplus L'  \arrow[rr, twoheadrightarrow, "(i \ \, i' \ \, j')"] && M 
\end{tikzcd}
\]
This shows that the map $(0 \ \, 0 \ \, 1)\colon P\oplus A_0\oplus L'\lxr L'$ factors through $(i \ \, i' \ \, j')$, that is, there is a morphism $g\colon M\lxr L'$ with $g\circ (i \ \, i' \ \, j')=(0 \ \, 0 \ \, 1)$. Using this, we get that $(\iden_{M}- j'g)\circ (i \ \, i' \ \, j')=(i \ \, i' \ \, 0)$. Since $A_0$ is in $\A$, there is a finite subset $\Delta\subset \Gamma$ such that the map $i'\colon A_0\lxr M$ factors through the inflation $P\oplus \coprod_{\delta\in \Delta}A_{\delta}\lxr P\oplus \coprod_{\gamma\in \Gamma}A_{\gamma}=M$. Hence, the map $\iden_{M}- j'g$ has its image in $P\oplus \coprod_{\delta\in \Delta}A_{\delta}$. Since $q=q(\iden_{M}- j'g)$, this proves that $\A/[\PP]$ is strongly left noetherian. 
\end{proof}
\end{thm}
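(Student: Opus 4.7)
The plan is to verify strongly left noetherianness via the factorization criterion recalled after Definition of strongly left noetherian: given a family $(f_\gamma\colon A_\gamma\lxr A)_{\gamma\in\Gamma}$ in $\A$, I must produce a finite $\Delta\subset\Gamma$ such that each $f_\gamma$ factors through $\bigoplus_{\delta\in\Delta}A_\delta\lxr A$ modulo $[\PP]$. The key idea is to replace the combinatorial problem over $\A$ by a structural one inside $\ke(\A)$, where the hypothesis forces a splitting that produces the finite subset.

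First I would fix a deflation $p\colon P\tra A$ with $P\in\PP$ (using that $\A$ is a left Ext-category) and form $M:=P\oplus\coprod_{\gamma\in\Gamma}A_\gamma$ together with the induced deflation $q=(p,\coprod f_\gamma)\colon M\tra A$ in $\ke(\A)$, using that $\ke(\A)$ inherits coproducts and exact structure from $\Mod(\PP)$ by Proposition~\ref{p20}. Take the kernel $A'\rat P\tra A$ of $p$ in $\A$, and the kernel $L\rat M\tra A$ of $q$ in $\ke(\A)$; the comparison map $A'\lxr L$ is an inflation. Now invoke the hypothesis: $L$ is a direct sum of objects of $\A$, so compactness of $A'\in\A$ inside $\Add\A$ yields a splitting $L=A_0\oplus L'$ with $A_0\in\A$ and $L'\in\Add\A$ such that $A'\lxr L$ factors through an inflation $A'\rat A_0$. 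Writing the inflation $A_0\oplus L'\rat M$ as $(i'\; j')$ and passing to the pushout of $A'\rat P$ along $A'\rat A_0$, a diagram chase produces a conflation $A'\rat P\oplus A_0\oplus L'\tra M$, whose dual description is exactly what will make $L'$ disappear.

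Next, from this conflation the projection $(0\;0\;1)\colon P\oplus A_0\oplus L'\lxr L'$ lifts to a morphism $g\colon M\lxr L'$ with $g(i\;i'\;j')=(0\;0\;1)$, so $(\id_M-j'g)(i\;i'\;j')=(i\;i'\;0)$. Because $A_0\in\A$ is compact in $\Add\A$, the inflation $i'\colon A_0\rat M$ factors through $P\oplus\coprod_{\delta\in\Delta}A_\delta$ for some finite $\Delta\subset\Gamma$; hence $\id_M-j'g$ has image in this finite subcoproduct. Finally, since $qj'=0$ (as $j'$ lands in $L=\ker q$), we have $q=q(\id_M-j'g)$, which restricted to each $A_\gamma$ shows that $f_\gamma$ factors through $P\oplus\bigoplus_{\delta\in\Delta}A_\delta\lxr A$; modding out the projective summand $P$ gives the required factorization in $\A/[\PP]$.

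The main obstacle is the bookkeeping in the middle step: one must arrange both the pushout square relating the two conflations and the direct-sum decomposition of $L$ compatibly, and then exhibit the specific idempotent-like endomorphism $\id_M-j'g$ that both kills the $L'$-summand and preserves $q$. The compactness of objects of $\A$ inside $\Add\A$ (used twice: once to split the inflation from $A'$ and once to cut $A_0\rat M$ down to a finite subcoproduct) is the technical tool that turns the hypothesis on $\ke(\A)$ into a finiteness statement about $\A$; without the compactness, neither reduction would be available.
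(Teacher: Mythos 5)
Your proposal is correct and follows essentially the same route as the paper's own proof: form $M := P\oplus\coprod_\gamma A_\gamma$ with the deflation $q$, compare kernels via the inflation $A'\rat L$, use the splitting hypothesis on $L$ together with compactness of $A'$ to extract $A_0\in\A$, build the pushout conflation to produce $g$ and the endomorphism $\id_M - j'g$, then cut $i'$ down to a finite subcoproduct by compactness of $A_0$ and observe $q = q(\id_M - j'g)$. The bookkeeping you flag as the "main obstacle" is handled exactly as in the paper, so there is nothing to add.
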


The above mentioned hitherto known extensions of Auslander's theorem \cite{Au76} and its converse \cite{RT} are contained in the following corollaries. We start with the following corollary on the artinian case.

\begin{cor}\label{corBelartinianring}
\textnormal{(\cite[Theorem~3.1]{Bel})}
\label{corBel1}
Let $R$ be a be a left artinian ring, and let $\A$
be a resolving subcategory of $\smod{R}$. Then the following are equivalent$\colon$

\begin{enumerate}
\item $\ind(\A)$ is finite.

\item Every flat object in $\Mod(\A)$ is projective.

\item Every object of $\varinjlim\A$ admits a decomposition into objects of 
$\A$.

\item $\ke(\A)={\bf Add}\A$. 
\end{enumerate}        
\begin{proof}
(i)$\Longrightarrow$(ii)$\colon$For $A=\bigoplus \ind(\A)$, consider the ring $S=\End_{\A}(A)^{\op}$. Since finitely generated modules over a left artinian ring have finite length, it follows from \cite[Corollary~29.3]{AF} that $S$ is semiprimary, hence perfect. So Bass' Theorem~P (\cite[Theorem~28.4]{AF}) implies that every flat object in $\Mod(\A)\simeq \Mod(S)$ is projective. 

(ii)$\Longrightarrow$(iii)$\colon$Since $\Add\A$ is dense in $\Mod(R)$ \cite[Chapter~X, Section~6]{MacLane}, there is a full embedding $\Mod(R)\lxr \Mod(\A)$. Then, Lemma~\ref{l5} shows that the $R$-modules in $\varinjlim\A$ are flat in $\Mod(\A)$. Hence, we deduce that $\varinjlim\A=\Add\A$. Since $\A$ is a Krull-Schmidt category, the claim in (iii) follows.

(iii)$\Longrightarrow$(iv)$\colon$This implication follows from Proposition~\ref{proplimresolving}.

(iv)$\Longrightarrow$(i)$\colon$By Corollary~\cite[Corollary~2.6]{KrauseSolberg}, the full subcategory $\A$ is covariantly finite in $\smod(R)$. Hence, $\A$ is an Auslander-Reiten category by \cite[Theorem~2.4]{AuSmsubcat}. By the Crawley-J{\o}nsson-Warfield theorem (see \cite[Theorem~26.5]{AF}), every object of ${\bf Add}\A$ is a direct sum of objects in $\A$. Then Theorem~\ref{t3} implies that $\A/[\add{R}]$ is strongly left noetherian. By Corollary~\ref{CorRumpCor1}, $\mathsf{M}(\A)$ is left L-finite. We infer that $\ind(\A)$ is finite by Theorem~\ref{thmRumpThm2}.
\end{proof}
\end{cor}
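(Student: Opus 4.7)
The plan is to establish the equivalence cyclically via (i)$\Longrightarrow$(ii)$\Longrightarrow$(iii)$\Longrightarrow$(iv)$\Longrightarrow$(i). Three of the four implications should be reasonably direct consequences of standard functor-category or Krull--Schmidt machinery; the real content is concentrated in the final step, which is where the new Theorem~\ref{t3} enters.

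For (i)$\Longrightarrow$(ii), I would choose a representative $A = \bigoplus_{X \in \ind(\A)} X$ and observe that $\Mod(\A) \simeq \Mod(S)$ for $S := \End_\A(A)^{\op}$. Since $R$ is left artinian, the finitely generated $R$-module $A$ has finite length, so $S$ is semiprimary and hence left perfect, and Bass's Theorem~P yields that every flat object in $\Mod(\A)$ is projective. For (ii)$\Longrightarrow$(iii), I would use Lemma~\ref{l5} to characterise the objects of $\varinjlim\A$ (viewed inside $\Mod(\A)$ through the canonical $\Mod(R) \hookrightarrow \Mod(\A)$) as flat objects of $\Mod(\A)$ coming from $\Mod(R)$; hypothesis (ii) then collapses $\varinjlim\A$ to $\Add\A$, and the Krull--Schmidt property of $\smod(R)$ over an artinian ring delivers the desired decomposition into objects of $\A$. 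For (iii)$\Longrightarrow$(iv), Proposition~\ref{proplimresolving} already gives $\ke(\A) \subseteq \varinjlim\A$, so (iii) forces every object of $\ke(\A)$ to lie in $\Add\A$, while the reverse inclusion $\mathbf{Add}\A = \Add\A \subseteq \ke(\A)$ is immediate from $\ke_0(\A) = \Add\A$.

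The crucial step is (iv)$\Longrightarrow$(i). The hypothesis fits exactly the setup of Theorem~\ref{t3}: every object of $\ke(\A)$ is a direct sum of objects in $\A$, so $\A/[\PP]$ is strongly left noetherian (here $\PP = \add R$). To promote this structural statement to finiteness of $\ind(\A)$, I would first verify the hypotheses of the representation-finiteness criteria of Section~\ref{sectionprelim}. The resolving condition together with a result of Krause--Solberg should upgrade $\A$ to a covariantly finite subcategory of $\smod(R)$, and then Auslander--Smal\o{} produces an Auslander--Reiten category structure, in particular almost split sequences, on $\A$. At this point Corollary~\ref{CorRumpCor1} converts the strong left noetherianness of $\A/[\PP]$ into left $L$-finiteness of $\mathsf{M}(\A)$, and Theorem~\ref{thmRumpThm2} then closes the loop by producing finiteness of $\ind(\A)$.

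The main obstacle is the (iv)$\Longrightarrow$(i) direction. Invoking Theorem~\ref{thmRumpThm2} requires a number of technical hypotheses---noetherianness of each endomorphism ring $\End_\A(A)$ and tameness of the indecomposable bijectives---all of which should ultimately follow from the artinianness of $R$, but whose verification is where the care is needed. Once these are in place, the chain of previously established results does most of the heavy lifting, while Theorem~\ref{t3} supplies the one genuinely new ingredient specific to this paper.
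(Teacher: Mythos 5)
Your proposal tracks the paper's proof essentially step for step. The arguments for (i)$\Longrightarrow$(ii), (ii)$\Longrightarrow$(iii) and (iii)$\Longrightarrow$(iv) match the paper verbatim in substance (Bass's Theorem~P, Lemma~\ref{l5} plus Krull--Schmidt, and Proposition~\ref{proplimresolving}, respectively), and your (iv)$\Longrightarrow$(i) outline---invoke Theorem~\ref{t3}, upgrade $\A$ to covariantly finite via Krause--Solberg, obtain Auslander--Reiten structure via Auslander--Smal\o, then chain Corollary~\ref{CorRumpCor1} and Theorem~\ref{thmRumpThm2}---is the paper's argument.

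One small gap worth flagging: in (iv)$\Longrightarrow$(i) you assert that the hypothesis $\ke(\A)={\bf Add}\A$ already means ``every object of $\ke(\A)$ is a direct sum of objects in $\A$,'' which is what Theorem~\ref{t3} needs. This is not automatic. Objects of ${\bf Add}\A = \Proj(\Mod(\A))$ are by definition only \emph{direct summands} of coproducts of objects of $\A$; passing from ``summand of a coproduct'' to ``genuine direct sum of objects of $\A$'' requires a decomposition theorem for projectives, namely the Crawley--J\o{}nsson--Warfield theorem, which the paper cites explicitly at this point. Your treatment of the parallel issue in (ii)$\Longrightarrow$(iii) (where you appeal to the Krull--Schmidt property) shows you are aware of the phenomenon, but you should make the same point explicit in (iv)$\Longrightarrow$(i). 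With that step supplied, your argument is complete and is the same route as the paper.
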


As an immediate consequence, Corollary~\ref{corBelartinianring} yields the following restatement.

\begin{cor}\label{corBelartinalg}
\textnormal{(\cite[Corollary~3.5]{Bel})}
\label{corBel2}
Let $\Lambda$ be an artin algebra, and let $\A$
be a resolving subcategory of $\smod(\Lambda)$. Then the following are equivalent$\colon$
\begin{enumerate}
\item $\ind(\A)$ is finite.

\item $\A$ is contravariantly finite in $\smod(\Lambda)$, and any object of $\varinjlim\A$ is a direct sum of objects in $\A$.
\end{enumerate}     
\end{cor}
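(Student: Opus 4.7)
The statement is announced as an immediate consequence of Corollary~\ref{corBelartinianring}, and my plan is exactly to package that equivalence into the present form, using that an artin algebra is in particular a left artinian ring.

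For the direction (i)$\Longrightarrow$(ii), assuming $\ind(\A)$ is finite I would set $M := \bigoplus \ind(\A)$ so that $\A = \add(M)$. Functorial finiteness of $\add(M)$ in $\smod(\Lambda)$ is classical for artin algebras: for every $X\in\smod(\Lambda)$, the endomorphism ring $\End_\Lambda(M)$ is itself an artin algebra, so $\Hom_\Lambda(M,X)$ is a finitely generated $\End_\Lambda(M)$-module, and any finite generating family induces a right $\add(M)$-approximation $M^n \tra X$. The decomposition property of objects in $\varinjlim\A$ is then the content of the implication (i)$\Longrightarrow$(iii) in Corollary~\ref{corBelartinianring}.

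For the converse (ii)$\Longrightarrow$(i), the hypothesis that every object of $\varinjlim\A$ decomposes into objects of $\A$ is exactly condition (iii) of Corollary~\ref{corBelartinianring}, so I would invoke the implication (iii)$\Longrightarrow$(i) of that corollary to conclude finiteness of $\ind(\A)$. The contravariant finiteness assumption in (ii) is precisely what supplies the input needed to apply the Krause--Solberg theorem (\cite[Corollary 2.6]{KrauseSolberg}) inside the proof of (iv)$\Longrightarrow$(i) of the preceding corollary, ensuring that $\A$ becomes functorially finite and hence Auslander--Reiten.

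No genuine obstacle appears beyond checking that the standing hypotheses (Krull--Schmidt of $\smod(\Lambda)$, noetherianness of $\End_\Lambda(A)$ for each $A\in\A$, tameness of indecomposable bijectives) transfer automatically from the artin algebra setting into the left artinian setting of Corollary~\ref{corBelartinianring}; the actual substance has already been carried out there, and the present corollary is only a convenient reformulation in the classical artin algebra language.
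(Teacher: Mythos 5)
Your core argument is correct and is exactly the intended reading of the paper, which offers no separate proof but labels the corollary ``an immediate consequence'' of Corollary~\ref{corBelartinianring}: an artin algebra is in particular a left artinian ring, (i)$\Longrightarrow$ contravariant finiteness is the classical observation via $\add(\bigoplus\ind\A)$, (i)$\Longrightarrow$ the decomposition property is (i)$\Longrightarrow$(iii) of Corollary~\ref{corBelartinianring}, and (ii)$\Longrightarrow$(i) is (iii)$\Longrightarrow$(i) of the same corollary. So far, so good.

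One remark in your final paragraph is, however, internally inconsistent with your own argument. You write that the contravariant finiteness hypothesis in (ii) ``is precisely what supplies the input needed to apply the Krause--Solberg theorem \ldots inside the proof of (iv)$\Longrightarrow$(i) of the preceding corollary.'' If that were the case, then the implication (iii)$\Longrightarrow$(i) of Corollary~\ref{corBelartinianring} -- which is the only thing you use for (ii)$\Longrightarrow$(i), and which you correctly invoke without any contravariant finiteness hypothesis -- would itself be incomplete, and Corollary~\ref{corBelartinianring} would be misstated. In fact \cite[Corollary~2.6]{KrauseSolberg} produces the covariant finiteness of $\A$ in $\smod(R)$ from the resolving property alone, with no contravariant finiteness input; that is what makes (iv)$\Longrightarrow$(i) valid as stated. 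The contravariant finiteness in condition (ii) of the present corollary is simply retained to match Beligiannis's original formulation and is automatic from (i); it plays no logical role in the direction (ii)$\Longrightarrow$(i). This does not affect the correctness of your proof, but you should drop the claim about it being the essential input to Krause--Solberg.
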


In the special case where $\A=\add\Lambda$ for an artin algebra $\Lambda$, Example~\ref{examGproj} and Corollary~\ref{corBelartinalg} give the following consequence.

\begin{cor}\textnormal{(\cite[Theorem~4.10]{Bel}} 
\label{corBel3}
Let $\Lambda$ be an artin algebra. The following are equivalent$\colon$ 

\begin{enumerate}
\item Every Gorenstein-projective $\Lambda$-module is a direct sum of finitely generated $\Lambda$-modules.

\item $\ind(\Gproj(\Lambda))$ is finite, and $\GProj(\Lambda)\subset \varinjlim(\Gproj(\Lambda))$.  
\end{enumerate}
\begin{proof}
(i)$\Longrightarrow$(ii)$\colon$By \cite{BelKrause}, the category $\Gproj(\Lambda)$ is contravariantly finite in $\smod(\Lambda)$ and then Corollary~\ref{corBelartinianring} applies.

(ii)$\Longrightarrow$(i)$\colon$It follows by Corollary~\ref{corBelartinalg}.
\end{proof}        
\end{cor}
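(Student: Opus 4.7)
The plan is to reduce both implications to Corollary~\ref{corBelartinianring} applied to the resolving subcategory $\A = \Gproj(\Lambda) \subset \smod(\Lambda)$. I will rely on two preliminary facts: (a) by Example~\ref{examGproj}, $\Gproj(\Lambda)$ is a Frobenius Ext-category with $\Proj(\Gproj(\Lambda)) = \Inj(\Gproj(\Lambda)) = \add(\Lambda)$, and $\GProj(\Lambda)$ is a resolving subcategory of $\Mod(\Lambda)$; and (b) by \cite{BelKrause}, $\Gproj(\Lambda)$ is contravariantly finite in $\smod(\Lambda)$. I will also use the Yoneda equivalence $\Mod(\add\Lambda) \simeq \Mod(\Lambda)$.

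The direction (ii)$\Rightarrow$(i) will be the shorter step. Assuming $\ind(\Gproj(\Lambda))$ is finite, Corollary~\ref{corBelartinianring}(iii) gives that every object of $\varinjlim(\Gproj(\Lambda))$ decomposes as a direct sum of objects in $\Gproj(\Lambda)$. The hypothesis $\GProj(\Lambda) \subset \varinjlim(\Gproj(\Lambda))$ then yields (i) at once.

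For (i)$\Rightarrow$(ii), the containment $\GProj(\Lambda) \subset \varinjlim(\Gproj(\Lambda))$ is automatic from (i), because any direct sum is the filtered colimit of its finite sub-sums, all of which lie in $\Gproj(\Lambda)$. To get the finiteness of $\ind(\Gproj(\Lambda))$, my plan is to verify condition (iv) of Corollary~\ref{corBelartinianring}, namely $\ke(\Gproj(\Lambda)) = {\bf Add}(\Gproj(\Lambda))$. First I would establish $\ke(\Gproj(\Lambda)) \subset \GProj(\Lambda)$: by Remark~\ref{rembigobjects}, $\ke(\Gproj(\Lambda))$ is the \emph{smallest} resolving subcategory of $\Mod(\add\Lambda) \simeq \Mod(\Lambda)$ that contains $\Add(\Gproj(\Lambda))$, and $\GProj(\Lambda)$ already enjoys both properties. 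Hypothesis (i) then forces every object of $\ke(\Gproj(\Lambda))$ to be a direct sum of objects in $\Gproj(\Lambda)$, so $\ke(\Gproj(\Lambda)) \subset {\bf Add}(\Gproj(\Lambda))$; the reverse inclusion holds by construction.

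The main obstacle will be the clean identification $\ke(\Gproj(\Lambda)) \subset \GProj(\Lambda)$, which hinges on placing $\GProj(\Lambda)$ into the right ambient resolving framework via Example~\ref{examGproj} and the Yoneda equivalence, and then invoking the minimality of $\ke$ from Remark~\ref{rembigobjects}. Once this containment is in hand, the remaining steps are a direct bookkeeping with the equivalences of Corollary~\ref{corBelartinianring}.
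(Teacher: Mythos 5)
Your proposal is correct, and the direction $(ii)\Rightarrow(i)$ matches the paper's appeal to Corollary~\ref{corBelartinalg} (equivalently, Corollary~\ref{corBelartinianring}~(i)$\Rightarrow$(iii)). The interesting divergence is in $(i)\Rightarrow(ii)$. The paper's own proof first cites \cite{BelKrause} for the contravariant finiteness of $\Gproj(\Lambda)$ in $\smod(\Lambda)$ and then invokes Corollary~\ref{corBelartinianring} without spelling out which of its conditions is being verified. You instead verify condition (iv) of Corollary~\ref{corBelartinianring} directly: by Example~\ref{examGproj} the category $\GProj(\Lambda)$ is a resolving subcategory of $\Mod(\Lambda)\simeq\Mod(\add\Lambda)$ containing $\Add(\Gproj(\Lambda))$, so the minimality statement of Remark~\ref{rembigobjects} yields $\ke(\Gproj(\Lambda))\subset\GProj(\Lambda)$; hypothesis (i) then gives $\GProj(\Lambda)\subset{\bf Add}(\Gproj(\Lambda))$ (the finitely generated summands of a Gorenstein-projective are again Gorenstein-projective, since $\GProj(\Lambda)$ is closed under summands), and the reverse inclusion $\ke_0=\Add$ is built in. This buys you a proof that stays entirely inside the paper's $\ke$-formalism and sidesteps the external citation to \cite{BelKrause} at the top level; it also makes the inclusion $\GProj(\Lambda)\subset\varinjlim(\Gproj(\Lambda))$ explicit, which the paper's one-line proof leaves implicit. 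One very small caution: state that the summands appearing in hypothesis (i) automatically lie in $\Gproj(\Lambda)$ (closure under direct summands), since (i) as phrased only asserts they are finitely generated.
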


Recall that a ring $R$ is said to be {\em Gorenstein} if $R$ is noetherian and the injective dimension of $R$ as a left or right $R$-module is finite. The next corollary extends Chen's theorem \cite{Chen} from Gorenstein artin algebras to a wide class of Gorenstein rings. We call a semilocal ring $R$ complete if it is (Hausdorff) complete in its $(\Rad{R})$-adic topology.

\begin{cor}
\label{corChensresult}
Let $R$ be a complete semilocal noetherian Gorenstein ring such that $\add{M}$ is strongly noetherian for all $M$ in $\Gproj(R)$. The following statements are equivalent$\colon$
\begin{enumerate}
\item $\ind(\Gproj(R))$ is finite.

\item Any Gorenstein-projective left or right $R$-module is a direct sum of finitely generated $R$-modules.
\end{enumerate}
\begin{proof}
Recall first that by \cite[Theorem~B]{Rowen}, the category of finitely generated $R$-modules $\smod{R}$ is Krull-Schmidt.

(i)$\Longrightarrow$(ii)$\colon$ We apply Theorem~\ref{thmRumpThm1} to the Frobenius category $\A:=\Gproj(R)$. This means that the stable category $\underline{\A}:=\A/[\add{R}]$ is strongly noetherian and $\Ext(\A)$ is a length category. By \cite[Theorem~6.6]{BelvirtGoralg}, see also \cite[Theorem~4.1]{Chen1}, the triangulated category $\underline{\GProj}(R):=\GProj(R)/[\Add{R}]$ is compactly generated, and its full subcategory of compact elements is equivalent to $\underline{\A}$. By Proposition~\ref{RumpProp2Extab}, there is an equivalence of abelian categories $\smod\underline{\A}\simeq \Ext(\A)$. Thus, for any object $X$ in $\underline{\GProj}(R)$, the $\underline{\A}$-module $X\mapsto \Hom_{\underline{\GProj}(R)}(-,X)|_{\underline{\A}}$ is flat, hence projective. This implies that there are objects $A_i$ in $\A$ with a natural isomorphism$\colon$
\[
\Hom_{\underline{\GProj}(R)}(-, \coprod A_i)|_{\underline{\A}} \cong \Hom_{\underline{\GProj}(R)}(-,X)|_{\underline{\A}}.
\]
Since $A_i\in \underline{\A}$, this isomorphism is induced by a morphism $\coprod A_i\lxr X$ in $\underline{\GProj}(R)$. Consider the triangle $\coprod A_i\lxr X\lxr Y\lxr \coprod A_i[1]$ in $\underline{\GProj}(R)$ and let $A$ an object in $\A$. Then, applying the functor $\Hom_{\underline{\GProj}(R)}(A,-)$ we obtain that $\Hom_{\underline{\GProj}(R)}(A,Y)=0$. Hence $Y=0$, and thus $X\cong \coprod A_i$ in $\underline{\GProj}(R)$. This shows that $\GProj(R)\simeq {\bf Add}\A$. By symmetry, this proves (ii).

(ii)$\Longrightarrow$(i)$\colon$By Theorem~\ref{t3}, the stable category $\underline{\A}$ is strongly left noetherian. Since the functor $\Hom_{\A}(-,R)$ is a duality between $\A$ and $\A^{\op}$, we infer that $\A$ is strongly noetherian. Furthermore, by \cite[Proposition~3.3]{EJX} it follows that $\A$ is contravariantly finite in $\smod(R)$. Thus Theorem~\ref{thmRumpThm1} completes the proof (since $\Gproj(R)$ satisfies the assumption of Theorem~\ref{thmRumpThm1}, see \cite{CMrepresentations, Yoshino}). 
\end{proof}
\end{cor}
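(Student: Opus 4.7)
The plan is to set $\A := \Gproj(R)$ and exploit the fact that, by our hypotheses, $\A$ is a Frobenius exact category with $\Proj(\A) = \Inj(\A) = \add{R}$, hence a left Ext-category to which Theorem~\ref{thmRumpThm1} and Theorem~\ref{t3} both apply. Note first that, since $R$ is complete semilocal noetherian, $\smod{R}$ is Krull--Schmidt and the duality $\Hom_{R}(-,R)\colon \A\simeq \A^{\op}$ gives a left-right symmetry that I will exploit. The identification $\GProj(R) = \mt(\ke(\Gproj(R)))$ from Example~\ref{examGproj} is the bridge between ``finitely generated'' and ``big'' Gorenstein-projectives.

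For the direction \textnormal{(i)}$\Longrightarrow$\textnormal{(ii)}, assume $\ind(\A)$ is finite. Using that $\add(M)$ is strongly noetherian for every $M\in\A$ and that the indecomposable bijectives of $\A$ (which are the indecomposable summands of $R$) are tame in $\A$, Theorem~\ref{thmRumpThm1} applied to $\A$ will give me that $\underline{\A} := \A/[\add{R}]$ is strongly noetherian and that $\Ext(\A)$ is a length category. By Proposition~\ref{RumpProp2Extab}, $\Ext(\A) \simeq \smod\underline{\A}$, and the length property forces every flat $\underline{\A}$-module to be projective. The next ingredient is that $\underline{\GProj}(R) := \GProj(R)/[\Add R]$ is a compactly generated triangulated category whose compact objects form a category equivalent to $\underline{\A}$; this is a known result of Beligiannis. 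Given $X \in \underline{\GProj}(R)$, the restricted Yoneda module $\Hom_{\underline{\GProj}(R)}(-,X)|_{\underline{\A}}$ is flat on compacts, hence projective, so it is representable by some coproduct $\coprod_i A_i$ with $A_i \in \A$. Completing the natural map $\coprod A_i \to X$ to a triangle and using compact generation forces the third term to vanish, whence $X \cong \coprod A_i$ in $\underline{\GProj}(R)$; lifting this decomposition back to $\GProj(R)$ (summands with $\add R$ can be absorbed) yields (ii) on the left, and by the duality on the right.

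For the direction \textnormal{(ii)}$\Longrightarrow$\textnormal{(i)}, I observe that (ii) exactly says every big Gorenstein-projective, and hence every object of $\ke(\A)\subset \GProj(R)$, splits as a direct sum of objects of $\A$. So Theorem~\ref{t3} applies to $\A$ and yields that $\underline{\A}$ is strongly left noetherian; the duality $\Hom_{R}(-,R)$ then upgrades this to strong noetherianity of $\A$. To invoke the full strength of Theorem~\ref{thmRumpThm1} I still need contravariant finiteness (to secure almost split sequences and an Auslander--Reiten structure on $\A$), which I will pull from the Enochs--Jenda--Xu-type result that $\Gproj(R)$ is contravariantly finite in $\smod(R)$ over a Gorenstein ring. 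All the remaining hypotheses of Theorem~\ref{thmRumpThm1} (Krull--Schmidt, $\End_\A(A)$ noetherian, tame indecomposable bijectives) are immediate from $R$ being complete semilocal noetherian Gorenstein and from the standing assumption on $\add M$. The conclusion that $\ind(\A)$ is finite then follows from (ii)$\Rightarrow$(i) in that theorem.

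The main obstacle, and the place where the argument depends most heavily on external input, is the compact generation of $\underline{\GProj}(R)$ with compacts $\underline{\A}$ used in \textnormal{(i)}$\Longrightarrow$\textnormal{(ii)}: without this one cannot promote the projectivity of $\Hom(-,X)|_{\underline{\A}}$ in $\smod\underline{\A}$ to a splitting of $X$ itself. Everything else is a fairly direct application of the Ext-category machinery (Theorem~\ref{t3}, Theorem~\ref{thmRumpThm1}, Proposition~\ref{RumpProp2Extab}) combined with the $R$-duality that makes the one-sided statements two-sided.
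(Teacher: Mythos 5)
Your proposal is correct and follows essentially the same route as the paper's own proof: for (i)$\Longrightarrow$(ii) you invoke Theorem~\ref{thmRumpThm1} to get that $\underline{\A}$ is strongly noetherian and $\Ext(\A)$ is a length category, pass through the equivalence $\Ext(\A)\simeq\smod\underline{\A}$ of Proposition~\ref{RumpProp2Extab}, use Beligiannis' compact generation of $\underline{\GProj}(R)$ with compacts $\underline{\A}$, and run the flat-implies-projective plus triangle argument; for (ii)$\Longrightarrow$(i) you apply Theorem~\ref{t3}, the $R$-duality, contravariant finiteness via the Enochs--Jenda--Xu result, and Theorem~\ref{thmRumpThm1}. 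This is exactly the structure of the paper's proof.
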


As a special case of Corollary~\ref{corChensresult}, we get the following result.

\begin{cor}\textnormal{(\cite[Theorem~4.20]{Bel})}
\label{corBel4}
Let $R$ be a commutative noetherian complete local ring with $\Gproj(R)\neq \add{R}$. The following statements are equivalent$\colon$
\begin{enumerate}
\item $\ind(\Gproj(R))$ is finite.

\item $R$ is Gorenstein, and every Gorenstein-projective $R$-module is a direct sum of finitely generated modules.
\end{enumerate}
\begin{proof}
Any finitely generated $R$-module $M$ admits an epimorphism $R^n\lxr M$, which yields an embedding $\End_R(M)\lxr \Hom_{R}(R^n,M)\cong M^n$. Hence the ring $\End_R(M)$ is noetherian. If (i) holds and since $\Gproj(R)\neq \add{R}$, \cite[Theorem~4.3]{CPST} implies that $R$ is Gorenstein. Thus Corollary~\ref{corChensresult} applies and the result follows.
\end{proof}
\end{cor}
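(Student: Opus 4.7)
The plan is to reduce both implications to Corollary~\ref{corChensresult}. Since a commutative noetherian complete local ring is, in particular, a complete semilocal noetherian ring, the only hypotheses of Corollary~\ref{corChensresult} that require verification are Gorensteinness of $R$ and the property that $\add M$ is strongly noetherian for every $M\in\Gproj(R)$.

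First I would take care of the noetherianness hypothesis, independently of which implication one is proving. Any finitely generated $R$-module $M$ admits an epimorphism $R^n\twoheadrightarrow M$, which induces an injection $\End_R(M)\hookrightarrow \Hom_R(R^n,M)\cong M^n$. Since $M^n$ is a noetherian $R$-module, so is its $R$-submodule $\End_R(M)$, and hence $S:=\End_R(M)^{\op}$ is a noetherian ring. The standard Morita-type equivalence $\add M\simeq \proj(S)$ together with the characterization of the strongly noetherian property recalled after \cite[Proposition~2]{Lat} then yields that $\add M$ is strongly noetherian.

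For (i)$\Longrightarrow$(ii), the finiteness of $\ind(\Gproj(R))$ combined with the standing assumption $\Gproj(R)\neq \add R$ allows me to invoke the Christensen-Piepmeyer-Striuli-Takahashi theorem \cite[Theorem~4.3]{CPST}, which concludes that $R$ is Gorenstein. All the hypotheses of Corollary~\ref{corChensresult} are now in place, and it delivers the splitting of every Gorenstein-projective $R$-module into finitely generated ones. Conversely, for (ii)$\Longrightarrow$(i), Gorensteinness of $R$ is part of the assumption, so Corollary~\ref{corChensresult} applies immediately and gives $\ind(\Gproj(R))$ finite.

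The main (and only) non-routine input is the appeal to \cite[Theorem~4.3]{CPST}, which is precisely what permits one to drop the a~priori Gorensteinness assumption of Corollary~\ref{corChensresult} in the direction (i)$\Longrightarrow$(ii); the remainder of the argument is a matter of matching hypotheses to the machinery already established.
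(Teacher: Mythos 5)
Your proof is correct and follows essentially the same route as the paper: verify the noetherianness hypothesis of Corollary~\ref{corChensresult} via the embedding $\End_R(M)\hookrightarrow M^n$, invoke \cite[Theorem~4.3]{CPST} together with $\Gproj(R)\neq\add R$ to obtain Gorensteinness in the direction (i)$\Rightarrow$(ii), and then apply Corollary~\ref{corChensresult} in both directions. The only difference is that you make explicit the passage from ``$\End_R(M)$ noetherian'' to ``$\add M$ strongly noetherian'' via the equivalence $\add M\simeq \proj(\End_R(M)^{\op})$, a step the paper leaves implicit.
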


\begin{rem}
Note that the proof of Corollary~\ref{corChensresult} shows that instead of ``$R$ Gorenstein'' in (ii) above, it is enough to assume that the projective and injective object $R\in \Gproj(R)$ is tame (see subsection~\ref{subsectionacyclic}).
\end{rem}

\begin{cor}
Let $\A$ be an Ext-category with right almost split sequences and $\dime\A\leq 2$. Assume that $\add{A}$ is strongly noetherian for each object $A$ in $\A$, and that $\ind(\Proj(\A))$ and $\ind(\Inj(\A))$ are finite. The following are equivalent$\colon$
\begin{enumerate}
\item $\ind(\A)$ is finite.

\item Every big $\A$-object is a direct sum of objects in $\A$.
\end{enumerate}
\begin{proof}
First recall that $\dime\A\leq 2$ implies that every morphism in $\A$ has a kernel, see Remark~\ref{remfindim}.

(i)$\Longrightarrow$(ii)$\colon$ By Theorem~\ref{thmRumpThm1}, the category $\A$ is strongly left noetherian. Thus, by \cite[Theorem~1]{Lat} every morphism in ${\bf Add}\A$ has a kernel. The big $\A$-objects are those of $\ke(\A)$. We infer that $\ke(\A)\simeq {\bf Add}\A$.

(ii)$\Longrightarrow$(i)$\colon$ By Theorem~\ref{t3}, the category $\A/[\PP]$ with $\PP:=\add(\Proj(\A))$ is strongly left noetherian. We show that $\A$ has almost split sequences. Let $A$ be an indecomposable non-projective object of $\A$. Since $\add{A'}$ is strongly noetherian for each object $A'$ of $\A$, there is a morphism $p\colon P\lxr A$ in $\Rad\A$ with $P\in \PP$ such that every morphism $Q\lxr A$ in $\Rad\A$ with $Q\in \PP$ factors through $p$. By Proposition~\ref{p13}, $\A$ is a resolving subcategory of $\smod(\PP)$. So there is a factorization of $p$ in $\smod(\PP)$ as follows$\colon$
\[
\begin{tikzcd}
P \arrow[r, twoheadrightarrow, "q"] \arrow[rr, bend right, "p"']  & J  \arrow[r, rightarrowtail, "j"] & A   
\end{tikzcd} 
\]
Furthermore, the kernel $k\colon K\lxr P$ of $q$ belongs to $\A$. Let $i\colon K\lxr I$ be an inflation in $\A$ with $I$ injective. This gives the following commutative diagram$\colon$
\[
\begin{tikzcd}
K \arrow[d, rightarrowtail, "i"] \arrow[r, "k"]  & P  \arrow[d, rightarrowtail] \arrow[r, "q"] & J \arrow[d, equal] \\
I \arrow[r] & A' \arrow[r, "a"] & J   
\end{tikzcd} 
\]
with short exact rows in $\smod(\PP)$ and $A'$ in $\A$. Now every morphism $f\colon C\lxr A$ in $\Rad\A$ factors through $j$. Since $\Ext^1_{\smod(\PP)}(C,I)=0$, we infer that $f$ factors through $ja$. Hence $ja\colon A'\lxr A$ is right almost split. Thus, $\A$ has almost split sequences. By Corollary~\ref{CorRumpCor1}, $\Ext(\A)$ is a length category. We infer that ${|\ind(\A)|<\infty}$ by Theorem~\ref{thmRumpThm1}.
\end{proof}
\end{cor}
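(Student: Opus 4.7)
The plan is to drive both implications through the two main engines already built in the paper: Theorem~\ref{thmRumpThm1}, which converts finiteness of $\ind(\A)$ into strong noetherianity plus almost split data, and Theorem~\ref{t3} combined with Corollary~\ref{CorRumpCor1}, which converts a splitting hypothesis on $\ke(\A)$ into strong left noetherianity of $\A/[\PP]$ and ultimately into left $L$-finiteness. The two additional inputs we have are $\dime\A\le 2$ (so that every morphism in $\A$ admits a kernel, by Remark~\ref{remfindim}) and the existence of right almost split sequences.

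For (i)$\Longrightarrow$(ii), I would first check the hypotheses of Theorem~\ref{thmRumpThm1}: Krull--Schmidt follows from strong noetherianity of $\add A$, $\ind\PP$ and $\ind\II$ are given finite, endomorphism rings are noetherian, and indecomposable bijectives are tame because finiteness of $\ind(\A)$ provides almost split sequences on both sides. Theorem~\ref{thmRumpThm1} then gives that $\A$ is strongly left noetherian. Combined with the presence of kernels in $\A$, the characterisation \cite[Theorem~1]{Lat} transports kernels to ${\bf Add}\A\simeq\Proj(\Mod(\A))$, so that ${\bf Add}\A$ is closed in $\Mod(\PP)$ under kernels of arbitrary morphisms. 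Since ${\bf Add}\A$ is also closed under the inductive step building $\ke_{n+1}$ from $\ke_n$ (the new objects are kernels of maps $\ke_n\to\Add\A$), one obtains $\ke(\A)={\bf Add}\A$, and Krull--Schmidt decomposes every object of $\ke(\A)$ into objects of $\A$. To pass from $\ke(\A)$ to the category $\mt(\ke(\A))$ of big $\A$-objects, I appeal to Proposition~\ref{p19} applied to $\ke(\A)$: finite dimension propagates from $\A$ to $\ke(\A)$, forcing $\mt(\ke(\A))=\ke(\A)$.

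For (ii)$\Longrightarrow$(i), the splitting hypothesis restricts from big $\A$-objects to $\ke(\A)$, so Theorem~\ref{t3} yields strong left noetherianity of $\A/[\PP]$. The remaining task is to place $\A$ in the scope of Corollary~\ref{CorRumpCor1} by upgrading the given right almost split sequences to the full almost split structure required there. For an indecomposable non-projective $A\in\A$, strong noetherianity of $\add A$ produces a right minimal radical map $p\colon P\to A$ with $P\in\PP$ absorbing every radical map from a projective. Using the resolving embedding $\A\hookrightarrow\smod(\PP)$ of Proposition~\ref{p13}, factor $p=j\circ q$ with $q\colon P\twoheadrightarrow J$ epi and $j\colon J\rightarrowtail A$ mono in $\smod(\PP)$; the bound $\dime\A\le 2$ forces $K:=\ker q$ to lie in $\A$. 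Embedding $K$ into an injective $I\in\II$ and taking the induced pushout yields an object $A'\in\A$ with a map $ja\colon A'\to A$; the vanishing $\Ext^1_{\smod(\PP)}(C,I)=0$ then lifts every radical morphism $C\to A$ through the pushout, showing that $ja$ is right almost split in the strong form needed. Combined with the given right almost split sequences this provides $\A$ with almost split sequences in the sense of Corollary~\ref{CorRumpCor1}, so that $\Ext(\A)$ is a length category, and Theorem~\ref{thmRumpThm1} concludes that $\ind(\A)$ is finite. The hard step is exactly this construction: one must manipulate the exact structure of the ambient abelian category $\smod(\PP)$ while relying on $\dime\A\le 2$ to keep the syzygy $K$ inside $\A$ rather than only in $\smod(\PP)$.
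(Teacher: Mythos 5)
Your proposal tracks the paper's proof essentially step for step: Theorem~\ref{thmRumpThm1} together with \cite[Theorem~1]{Lat} to get kernels in ${\bf Add}\A$ and hence $\ke(\A)\simeq{\bf Add}\A$, then Theorem~\ref{t3}, Proposition~\ref{p13} and the $\dime\A\leq 2$ bound (via Remark~\ref{remfindim}) to build right almost split maps feeding Corollary~\ref{CorRumpCor1} and Theorem~\ref{thmRumpThm1} for the converse. One small point of hygiene: when verifying tameness of the indecomposable bijectives so as to invoke Theorem~\ref{thmRumpThm1} in (i)$\Longrightarrow$(ii), you appeal to ``finiteness of $\ind(\A)$ provides almost split sequences,'' which is dangerously close to circular since almost split sequences appear among that theorem's equivalent conditions; the cleaner (and intended) justification is that the hypothesis already supplies right almost split sequences, hence sink maps ending in every bijective.
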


We are now ready to extend the theorems of Ringel and Tachikawa \cite{RT} and Auslander \cite{Au76} to arbitrary dimension. Recall that a Cohen-Macaulay order $\Lambda$ is representation-finite if there are only finitely many isomorphism classes of indecomposable Cohen-Macaulay $\Lambda$-modules (i.e.\ finitely generated $\Lambda$-modules which are free as $R$-modules).

\begin{thm}
\label{t4}
Let $R$ be a complete regular local ring, and let $\Lambda$ be a 
Cohen-Macaulay $R$-order. The following statements are equivalent$\colon$
\begin{enumerate}
\item $\Lambda$ is representation-finite.

\item Every accessible big Cohen-Macaulay $\Lambda$-module is a direct sum of 
finitely generated $\Lambda$-modules.
\end{enumerate}       
\begin{proof}
(i)$\Longrightarrow$(ii)$\colon$For $\dime R=0$, this is Ringel and Tachikawa's theorem \cite{RT}. We proceed by induction on $\dime R$. Thus, assume that $\dime R>0$. Choose any parameter $\pi\in \Rad{R}$. By Theorem~\ref{thmRumpThm1}, $\Ext(\CM(\Lambda))$ is a length category. Hence there is an integer $n\in\mathbb{N}$ with $\pi^n \Ext(\CM(\Lambda))=0$. Let $L$ be a $\Lambda$-module in $\ke_1(\CM(\Lambda))$. So there is a short exact sequence 
$L\rat L_0\tra L_1$ of $\Lambda$-modules with 
$L_0,L_1$ in ${\bf Add}\CM(\Lambda) (:=\Proj(\Mod\CM(\Lambda)))$. This gives a commutative diagram
\[
\begin{tikzcd}
L \arrow[d, "\pi^n"] \arrow[r, rightarrowtail, "i"] & L_0 \arrow[dl, "h"] \arrow[d, "\pi^n"] \arrow[r, twoheadrightarrow, "p"] & L_1 \arrow[d, "\pi^n"]  \\
L \arrow[r, rightarrowtail, "i"] & L_0 \arrow[r, twoheadrightarrow, "p"] & L_1  
\end{tikzcd} 
\]
with a morphism $h\colon L_0\lxr L$ satisfying $hi=\pi^n$. Since $L_0$ lies in
${\bf Add}\CM(\Lambda)$, there is a deflation $d\colon P_0 \tra L_0$ in $\ke(\CM(\Lambda))$ with $P_0$ in ${\bf Add}(\Proj(\CM(\Lambda)))\simeq {\bf Add}{\Lambda}$. By assumption, $\pi^n\colon L_0\lxr L_0$ factors through $d$. Furthermore, $hi=\pi^n$ implies that $\pi^{2n}\colon L\lxr L$ is equal to  
\[
\begin{tikzcd}
L \arrow[r, rightarrowtail, "i"] & L_0 \arrow[r, "\pi^n"] & L_0 \arrow[r, "h"] & L  
\end{tikzcd} 
\]
and therefore $\pi^{2n}\colon L\lxr L$ factors through $P_0$. 

Now let $L'\stackrel{j}{\rat} P\stackrel{q}{\tra} L$ be a short exact sequence of $\Lambda$-modules with $P$ projective. Then $\pi^{2n}\colon L\lxr L$ factors through $q$. So we get a commutative diagram
\[
\begin{tikzcd}
L' \arrow[r, rightarrowtail] \arrow[d, equal] & L'\oplus L \arrow[r, twoheadrightarrow] \arrow[d, rightarrowtail] & L \arrow[d, rightarrowtail,"\pi^{2n}"] \\
L' \arrow[r, rightarrowtail, "j"] & P \arrow[d, twoheadrightarrow] \arrow[r, twoheadrightarrow, "q"] & L \arrow[d, twoheadrightarrow] \\
 & L/\pi^{2n}L \arrow[r, equal] & L/\pi^{2n}L
\end{tikzcd}
\]
of $\Lambda$-modules with short exact rows and columns. By the inductive 
hypothesis, the $R/\pi^{2n}R$-free $\Lambda/\pi^{2n}\Lambda$-module $L/\pi^{2n}L$ is a direct sum of finitely 
generated modules. Hence $L'\oplus L$, as a first syzygy of $L/\pi^{2n}L$,
is stably equivalent to a direct sum of finitely generated $\Lambda$-modules. Thus the $\Lambda$-module $L$ lies in ${\bf Add}\CM(\Lambda)$. This implies that $\ke(\CM(\Lambda))\simeq {\bf Add}\CM(\Lambda)$. 
 As $\CM(\Lambda)$ is a Krull-Schmidt category, the implication (i)$\Longrightarrow$(ii) follows.

(ii)$\Longrightarrow$(i)$\colon$This follows by Theorem~\ref{t3}, Theorem~\ref{thmRumpThm1} (v) and Corollary~\ref{CorRumpCor1} (note that $\CM(\Lambda)$ satisfies the assumptions of the latter two results \cite{Iyama, CMrepresentations, Yoshino}).
\end{proof}
\end{thm}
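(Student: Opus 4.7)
The theorem naturally breaks into two directions, and the plan is to attack them with very different tools. For the implication (ii) $\Longrightarrow$ (i), I would lean entirely on the general machinery developed in the paper: applying the splitting-big-objects theorem (Theorem~\ref{t3}) with $\A := \CM(\Lambda)$ yields that $\CM(\Lambda)/[\PP]$ is strongly left noetherian, and feeding this into Corollary~\ref{CorRumpCor1} gives that $\mathsf{M}(\CM(\Lambda))$ is left $L$-finite. Combined with Theorem~\ref{thmRumpThm1} (v), this forces the set of isomorphism classes of indecomposables in $\CM(\Lambda)$ to be finite. The only verification required here is that $\CM(\Lambda)$ satisfies the standing hypotheses of those results (Krull-Schmidt, noetherian endomorphism rings, and tameness of the indecomposable bijectives), all of which are standard consequences of $R$ being complete regular local and $\Lambda$ being a Cohen-Macaulay $R$-order.

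For the harder implication (i) $\Longrightarrow$ (ii) I plan to proceed by induction on $d := \dime R$. The base case $d = 0$ collapses to the setting of an artin algebra with $\CM(\Lambda) = \smod\Lambda$, where the statement is exactly the classical Ringel-Tachikawa theorem. For the inductive step with $d \geq 1$, the goal is to prove $\ke(\CM(\Lambda)) \simeq {\bf Add}\,\CM(\Lambda)$; since $\ke(\CM(\Lambda)) = \bigcup_n \ke_n(\CM(\Lambda))$ and ${\bf Add}\,\CM(\Lambda) = \ke_0(\CM(\Lambda))$, it suffices to promote a single $L \in \ke_1(\CM(\Lambda))$ into ${\bf Add}\,\CM(\Lambda)$ and then iterate up the filtration.

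The key lever is that representation-finiteness of $\Lambda$ forces $\Ext(\CM(\Lambda))$ to be a length category via Theorem~\ref{thmRumpThm1}, so after picking any parameter $\pi \in \Rad R$ we can choose $n$ with $\pi^n \Ext(\CM(\Lambda)) = 0$. Given $L \in \ke_1(\CM(\Lambda))$ with defining conflation $L \overset{i}{\rightarrowtail} L_0 \twoheadrightarrow L_1$, the vanishing of $\pi^n$ on the extension class should let us lift multiplication by $\pi^n$ on $L$ to a map $h \colon L_0 \to L$ satisfying $hi = \pi^n$. Composing with a deflation $P_0 \twoheadrightarrow L_0$ from ${\bf Add}\,\Lambda$ and using compactness of $L_0$, I expect $\pi^{2n} \colon L \to L$ to factor through a $\Lambda$-projective. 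A standard pushout diagram built on a projective cover $L' \rightarrowtail P \twoheadrightarrow L$ then realises $L' \oplus L$ as a first syzygy of $L/\pi^{2n} L$ in $\Mod(\Lambda)$.

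The punchline is to view $L/\pi^{2n}L$ as a Cohen-Macaulay module over the order $\Lambda/\pi^{2n}\Lambda$, whose base ring $R/\pi^{2n}R$ has smaller Krull dimension, and invoke the inductive hypothesis to decompose it into finitely generated summands; the first syzygy $L' \oplus L$ is then stably equivalent to such a sum, placing $L$ in ${\bf Add}\,\CM(\Lambda)$. The main obstacle I anticipate is precisely this dimension reduction: $R/\pi^{2n}R$ is no longer regular, so the inductive hypothesis must either be phrased to cover this weaker setting (perhaps by tracking that only completeness and the existence of a parameter is used) or the reduction must be staged differently, e.g.\ killing a full system of parameters and appealing directly to the artinian Ringel-Tachikawa theorem at the bottom. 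Making this reduction work while preserving the Cohen-Macaulay structure needed for Theorems~\ref{thmRumpThm1} and \ref{t3} is the technical crux.
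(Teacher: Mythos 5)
Your proposal mirrors the paper's proof in all essentials. For (ii)$\Longrightarrow$(i) you invoke Theorem~\ref{t3}, Corollary~\ref{CorRumpCor1} and Theorem~\ref{thmRumpThm1}(v) exactly as the paper does. For (i)$\Longrightarrow$(ii), the paper also inducts on $\dime R$, starts from Ringel--Tachikawa at $d=0$, uses length-finiteness of $\Ext(\CM(\Lambda))$ to produce $n$ with $\pi^n\Ext(\CM(\Lambda))=0$, lifts $\pi^n$ across the conflation $L\rat L_0\tra L_1$ defining $L\in\ke_1(\CM(\Lambda))$, factors $\pi^{2n}\colon L\lxr L$ through ${\bf Add}\,\Lambda$, realises $L'\oplus L$ as a first syzygy of $L/\pi^{2n}L$, and applies the inductive hypothesis to $\Lambda/\pi^{2n}\Lambda$. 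Your promotion strategy ``$\ke_1\hookrightarrow{\bf Add}$, then iterate up the filtration'' is exactly the paper's.

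The concern you flag as ``the technical crux'' is the right one, and it is precisely what the paper's proof leaves unaddressed. The theorem is stated for $R$ complete \emph{regular} local, yet $R/\pi^{2n}R$ is not regular for $d\ge 2$ (its maximal ideal needs $d$ generators but $\dime(R/\pi^{2n}R)=d-1$); the paper simply writes ``by the inductive hypothesis'' without explaining why it may be applied to a non-regular base. In addition, representation-finiteness of $\Lambda/\pi^{2n}\Lambda$ is quietly assumed; it is plausible for orders over complete local rings but is not automatic from that of $\Lambda$ and is not argued. (The remaining hypothesis, that $L/\pi^{2n}L$ is accessible-big over $\Lambda/\pi^{2n}\Lambda$, \emph{does} follow easily: since $L_1\in{\bf Add}\,\CM(\Lambda)$ is $R$-free, $\Tor^R_1(R/\pi^{2n},L_1)=0$, so the defining conflation stays exact modulo $\pi^{2n}$.) The two remedies you sketch are the natural ones: (a) verify that the argument only ever uses completeness and the existence of a parameter, so the induction can be run over complete local Cohen--Macaulay $R$, or (b) perform a Noether normalisation — Cohen's structure theorem gives a complete regular local $R'\subset R/\pi^{2n}R$ of dimension $d-1$ over which $R/\pi^{2n}R$, hence $\Lambda/\pi^{2n}\Lambda$, is finite free — and apply the inductive hypothesis over $R'$, after reconciling $R'$-freeness with $R/\pi^{2n}R$-freeness. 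The paper pursues neither, so as written this step is a genuine gap, and your proposal's caution is warranted rather than an oversight on your part.
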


Let us conclude our paper with the following problem.

\begin{Problem}
Motivated from our Auslander-Ringel-Tachikawa result in higher dimension, it is natural to consider the Auslander-Reiten quiver of a representation-finite Cohen-Macaulay $R$-order where $R$ is a complete $d$-dimensional regular local ring. Igusa-Todorov \cite{IgusaTodorov:1, IgusaTodorov:2, IgusaTodorov:3}, Iyama \cite{Iy0, Iy1, Iy2} and the second author \cite{add} have classified finite Auslander-Reiten quivers in dimension $d\leq 1$. Also, from the work of Reiten-Van den Bergh \cite{ReitenVandenBergh} the Auslander-Reiten quivers are known in dimension 2. For Cohen-Macaulay orders of higher
dimension, only scattered results are known \cite{AR:CMtype, Knorrer, EriksenGustavsen, Solberg}. Enomoto \cite{Enomoto} gives an
Auslander-correspondence for such orders. It seems that with increasing dimension, projectives and injectives play a dominant role in the representation-finite case. We expect that a classification of finite Auslander-Reiten quivers in dimension $3$ will provide a critical knowledge for tackling the problem in all finite dimensions.
\end{Problem}

\end{document}